\documentclass[a4paper]{article}
\usepackage{amsmath,amsfonts,amssymb,amsthm} 
\usepackage[all]{xy}
\usepackage[english]{babel}

\numberwithin{equation}{section}

\usepackage{vmargin}
\setmarginsrb{18mm}{15mm}{18mm}{20mm}
          {10mm}{7mm}{10mm}{10mm}

\theoremstyle{plain}
\newtheorem{teo}{Theorem}[section]
\newtheorem{prop}[teo]{Proposition}
\newtheorem{pro}[teo]{Problem}
\newtheorem{lemma}[teo]{Lemma}
\newtheorem{cor}[teo]{Corollary}
\newtheorem{fact}[teo]{Fact}
\newtheorem{que}[teo]{Question}

\theoremstyle{definition}

\newtheorem{defin}[teo]{Definition}

\theoremstyle{remark}

\newtheorem{rem}[teo]{Remark}
\newtheorem{exa}[teo]{Example}

\newtheorem{claim}[teo]{Claim}
\newtheorem{remark}[teo]{Remark}

\DeclareMathOperator{\CH}{\mathfrak{Char}}
\newcommand{\sv}{s_{\vs}}
\newcommand{\su}{s_{\us}}
\newcommand{\n}{n}

\newcommand{\G}{G_\delta}

\newcommand{\FF}{F_{\sigma\delta}}

\newcommand{\T}{\mathbb{T}}
\newcommand{\Z}{\mathbb{Z}}
\newcommand{\Q}{\mathbb{Q}}
\newcommand{\R}{\mathbb{R}}
\newcommand{\Jp}{\mathbb{J}_p}
\newcommand{\N}{\mathbb{N}}

\newcommand{\cc}{\mathfrak{c}}
\newcommand{\Prm}{\mathbb P}

\newcommand{\us}{\mathbf{u}}
\newcommand{\vs}{\mathbf{v}}
\newcommand{\ws}{\mathbf{w}}
\newcommand{\la}{\left|}
\newcommand{\ra}{\right|}
\newcommand{\lb}{\left\|}
\newcommand{\rb}{\right\|}
\newcommand{\lgr}{\left\{}
\newcommand{\rgr}{\right\}}
\newcommand{\wh}{\widehat}
\newcommand{\wt}{\widetilde}

\newcommand{\rs}{\restriction}

\DeclareMathOperator{\g}{\mathfrak{g}}

\newcommand{\Gz}{\Gamma_\vs^0}
\newcommand{\Gi}{\Gamma_\vs^\infty}

\author{D. Dikranjan\footnote{The first and second named authors were partially supported by Fondazione Cassa di Risparmio di Padova e Rovigo (Progetto di Eccellenza ``Algebraic structures and their applications'')}\\ \footnotesize{dikran.dikranjan@uniud.it} \and A. Giordano Bruno$^*$\footnote{The second named author is supported by Programma SIR 2014 by MIUR (Project GADYGR)}\\ \footnotesize{anna.giordanobruno@uniud.it} \and D. Impieri\\ \footnotesize{daniele.impieri@uniud.it}}

\date{\footnotesize{Universit\`a degli Studi di Udine \\ Dipartimento di Matematica e Informatica \\ Via delle Scienze 206, Udine - Italy}}

\title{Characterized subgroups of topological abelian groups}

\begin{document}

\maketitle

\abstract{A subgroup $H$ of a topological abelian group $X$ is said to be characterized by a sequence $\vs =(v_n)$ of characters of $X$ if $H=\{x\in X:v_n(x)\to 0\ \text{in}\ \T\}$. We study the basic properties of characterized subgroups in the general setting, extending results known in the compact case. For a better description, we isolate various types of characterized subgroups. Moreover, we introduce the relevant class of autochacaracterized groups (namely, the groups that are characterized subgroups of themselves by means of a sequence of non-null characters); in the case of locally compact abelian groups, these are proved to be exactly the non-compact ones. As a by-product of our results, we find a complete description of the characterized subgroups of discrete abelian groups.}

\bigskip 

\noindent \textbf{Keywords:} characterized subgroup; $T$-characterized subgroup; $T$-sequence; $TB$-sequence; von Neumann radical; convergent sequence; null sequence; abelian group.

\smallskip
\noindent \textbf{MSC classification:} 22A10; 43A40; 54H11; 22B05; 20K45.

\section{Introduction}

For a topological abelian group $X$, we denote by $\widehat X$ its dual group, that is, the group of all characters of $X$ (i.e., continuous homomorphisms $X\to \T$). Following \cite{DMT05}, for a sequence of characters $\vs=(v_n)\in\widehat{X}^\N$, let 
$$
\sv(X):=\lgr x\in X: v_n(x)\to0\rgr,
$$ which is always a subgroup of $X$. A subgroup $H$ of $X$ is said to be \emph{characterized} if $H=\sv(X)$ for some $\vs=(v_n)\in\widehat X^\N$. 

Historically, characterized subgroups were studied exclusively in the case of the circle group $\T=\R/\Z$ (see \cite{Arm81,BDS01,DPS90,Lar88}), also in relation with Diophantine approximation, Dynamical systems and Ergodic theory (see \cite{BDS01,PS73,Win02}). (One can find more on this topic in the nice survey \cite{Gab12}, as well as in the more recent  \cite{DI15a, DI15c,Gab14,Neg14}.) 
Some general results were then obtained in the case of metrizable compact abelian groups; for example, it is known that every countable subgroup of a metrizable compact abelian group is characterized (see \cite[Theorem 1.4]{DK07} and \cite{BSW06}), and it was pointed out in \cite{BSW06,Dik06} that the metrizability is necessary, as a compact abelian group with a countable characterized subgroup is necessarily metrizable. Only recently, the case of general compact abelian groups was given full attention in \cite{DG13} and a reduction theorem (to the metrizable case) was obtained. 

The few exceptions \cite{Bor83,Gab12} (concerning respectively characterized subgroups of $\R$ and of compact non-abelian groups), only confirm the tendency to study the characterized subgroups of $\T$ or, more recently, of compact abelian groups. 
To say the least, even the simplest case of characterized subgroups of \emph{discrete} abelian groups has never been considered in the literature to the best of our knowledge. 

\medskip		
The aim of these notes is to develop a general approach to characterized subgroups of arbitrary topological abelian groups, collecting the basic properties so far established in the compact case.

\bigskip
We isolate three special types of characterized subgroups, namely $T$-characterized, $K$-characterized and $N$-characterized subgroups (see Definition \ref{T-N-K}). 
Of those, $T$-characterized subgroups were introduced by Gabriyelyan in \cite{Gab14}, $K$-characterized subgroups were substantially studied by Kunen and his coauthors in \cite{HK05,HK06}, while $N$-characterized subgroups, even if never introduced explicitly, have been frequently used in the theory of duality in topological abelian groups (being nothing else but the annihilators of countable sets of the dual group). One of the advantages of this articulation is the possibility to establish some general permanence properties that fail to be true in the whole class of characterized subgroups, but hold true in some of these subclasses. Moreover, we see that each characterized subgroup is the intersection of an $N$- and a $K$-characterized subgroup (see Corollary \ref{NcapK}).

Inspired by the notion of $T$-characterized subgroup, we introduce also the stronger one of $TB$-characterized subgroup (see Definition \ref{TB}).  The following implications hold and none of them can be reversed in general (see Section \ref{hsec}):
\begin{equation*}
\xymatrix{
\text{$TB$-characterized} \ar@{=>}[r] & \text{$T$-characterized}\ar@{=>}[r] & \text{$K$-characterized}\ar@{=>}[r] & \text{characterized}\\
\text{proper dense characterized} \ar@{=>}[u]  & & \text{$N$-characterized}\ar[u]_{(*)} \ar@{=>}[r]  & \text{closed characterized}\ar@{=>}[u]
}
\end{equation*}
where ($*$) holds under the assumption that the subgroup is closed and has infinite index (see Corollary \ref{N->K}).

\smallskip
In Section \ref{autosec} we introduce the prominent class of autocharacterized groups (see Definition \ref{PA}). These are the topological abelian groups that are characterized subgroups of themselves by means of a \emph{non-trivial sequence} of characters (see \eqref{ev0}). 
The  fact that compact abelian groups are not autocharacterized is equivalent to the well known non-trivial fact that the Bohr topology of an infinite discrete abelian group has no non-trivial convergent sequences. Here we generalize this fact by proving that the property of being non-autocharacterized describes the compact abelian groups within the class of all locally compact abelian groups (see Theorem \ref{compactnonac}). Moreover, in the general case we describe the autocharacterized groups in terms of their Bohr compactification (see Theorem \ref{autocarcar}).

We study the basic properties of $K$- and of $N$-characterized subgroups respectively in Sections \ref{Ksec} and \ref{Nsec}. For the case of discrete abelian groups, which is considered here for the first time, we give a complete description of characterized subgroups by showing that these are precisely the subgroups of index at most $\cc$, and that a subgroup is characterized precisely when it is $K$- and $N$-characterized (see Corollary \ref{DiscGr}).

In Section \ref{Ksec}, we describe when a closed subgroup of infinite index is both $K$- and $N$-characterized, and we see that this occurs precisely when it is only $N$-characterized (see Theorem \ref{T1122}); then we consider the special case of open subgroups, proving that proper open subgroups of infinite index (respectively, of finite index) are $K$-characterized if and only if they are characterized (respectively, autocharacterized) (see Theorems \ref{Kcar=car} and \ref{PtimesKchar:new}).  In particular, no proper open subgroup of a compact abelian group is $K$-characterized.

In Section \ref{Nsec}, extending a criterion for compact abelian groups given in \cite{DG13}, we show that for locally compact abelian groups one can reduce the study of characterized subgroups to the metrizable case (see Theorem \ref{ThB}). Moreover, we describe the closed characterized subgroups of the locally compact abelian groups by showing that they are precisely the $N$-characterized subgroups (see Theorem \ref{Titemi}). As a consequence, we add other equivalent conditions to the known fact from \cite{DG13} that a closed subgroup of a compact abelian group is characterized if and only if it is $\G$, namely, that the subgroup is $K$- and $N$-characterized (see Theorem \ref{CKNG}).

Section \ref{SSMM} concerns $T$-characterized subgroups of compact abelian groups. We establish a criterion to determine when a characterized subgroup of a compact abelian group is not $T$-characterized (see Theorem \ref{MainThChapter7}), 
which extends results from \cite{Gab14}. The impact on characterized subgroups of connected compact abelian groups is discussed.

The final Section \ref{finalsec} contains various comments and open problems, both general and specific.

\subsection*{Acknowledgements}

It is a pleasure to thank Wis Comfort for his useful comments related to (an equivalent form of) Question \ref{Wis}, and Saak Gabriyelyan for sending us his preprint \cite{Gab14} in 2012.

\subsection*{Notation and terminology}

The symbols $\Z$, $\Prm$, $\N$ and  $\N_+$ are used for the set of integers, the set of primes, the set of non-negative integers and the set of positive integers, respectively. The circle group $\T$ is identified with the quotient group $\R/\Z$ of the reals $\R$ and carries its usual compact topology.  We denote by $\T_+$ the image of $[-1/4,1/4]$ in $\T$. If $m$ is a positive integer, $G[m]=\{x\in G:m x=0\}$ and $\Z(m)$ is the cyclic group of order $m$. 
Moreover, for $p\in\Prm$, we denote by $\Z(p^\infty)$ and $\Jp$ respectively the Pr\"ufer group and the $p$-adic integers.

We say that an abelian group $G$ is \emph{torsion} if every element of $G$ is torsion (i.e., for every $x\in G$ there exists $m\in\N_+$ such that $mx=0$).
If $M$ is a subset of $G$, then $\langle M\rangle$ is the smallest subgroup of $G$ containing $M$. 

For a topological space $X=(X,\tau)$ the \emph{weight} $w(X)$ of $X$ is the minimum cardinality of a base for $\tau$. For a subset $A$ of $X$ we denote by $\overline{A}^\tau$ the closure of $A$ in $(X,\tau)$ (sometimes we write only $\overline A$ when there is no possibility of confusion).

A topological abelian group $X$ is \emph{totally bounded} if for every open subset $U$ of $0$ in $X$ there exists a finite subset $F$ of $X$ such that $U+F=X$. If $X$ is totally bounded and Hausdorff we say that $X$ is \emph{precompact}.
We denote by $\widetilde X$ the two-sided completion of $X$; in case $X$ is precompact $\widetilde X$ coincides with the Weil completion.

For a subset $A$ of $X$, the \emph{annihilator} of $A$ in $\wh X$ is $A^\perp=\{\chi\in\wh X:\chi(A)=\{0\}\}$, and for a subset $B$ of $\wh X$, the \emph{annihilator} of $B$ in $X$ is $B^\perp=\{x\in X:\chi(x)=0\ \text{for every}\ \chi\in B\}$.

We say that a sequence $\vs\in\wh X^\N$ is \emph{non-trivial} if it is not eventually null.

\section{Background on topological groups}

\subsection{Basic definitions}

Let $G$ be an abelian group and $H$ a subgroup of $\mathrm{Hom(G,\T)}$. Let $T_H$ be the weakest group topology on $G$ such that all characters of $H$ are continuous with respect to $T_H$; then $T_H$ is totally bounded. Viceversa, Comfort and Ross proved that any totally bounded group topology is of this type (see \cite[Theorem 1.2]{CR64}).

\begin{teo}\label{def}\emph{\cite[Theorems 1.2, 1.3 and 1.11, Corollary 1.4]{CR64}}
Let $G$ be an abelian group and $H$ a subgroup of $G$. Then $T_H$ is totally bounded and:
\begin{itemize}
\item[(a)] $T_H$ is Hausdorff if and only if $H$ separates the points of $G$;
\item[(b)]  $T_H$ is metrizable if and only if $H$ is countable. 
\end{itemize}
\end{teo}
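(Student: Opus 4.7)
The plan is to analyze the three claims in turn, using the explicit subbase at the identity consisting of sets $v^{-1}(V)$ with $v \in H$ and $V$ a neighborhood of $0$ in $\T$.

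For total boundedness, I would take an arbitrary basic $T_H$-neighborhood of the origin, written as $U = \bigcap_{i=1}^n v_i^{-1}(V_i)$ for some $v_1,\ldots,v_n \in H$ and neighborhoods $V_i$ of $0$ in $\T$. Assembling these into a single homomorphism $\phi := (v_1,\ldots,v_n) : G \to \T^n$ gives $U = \phi^{-1}(V_1 \times \cdots \times V_n)$. Since $\T^n$ is compact, there is a finite set $S \subseteq \T^n$ with $S + (V_1 \times \cdots \times V_n) = \T^n$; lifting $S \cap \phi(G)$ to a finite $F \subseteq G$ along $\phi$ then yields $F + U = G$.

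For (a), I would invoke the standard criterion that a topological group is Hausdorff precisely when the intersection $N$ of all neighborhoods of $0$ is trivial. Because $\bigcap_{V \ni 0} v^{-1}(V) = \ker v$ for each $v \in H$, one has $N = \bigcap_{v \in H} \ker v$, which equals $\{0\}$ exactly when $H$ separates the points of $G$.

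For (b), the easy direction is immediate: if $H = \{v_n : n \in \N\}$ is countable and $\{V_k\}_{k \in \N}$ is a countable base at $0$ in $\T$, then the countable family $\bigl\{\bigcap_{i \le n} v_i^{-1}(V_k) : n,k \in \N\bigr\}$ is a base at $0$ for $T_H$, so first countability plus the Birkhoff--Kakutani theorem yield metrizability. The reverse implication is the main obstacle. The plan is to fix a countable base $\{U_n\}$ at $0$ in $T_H$ and, for each $n$, select a finite set $F_n \subseteq H$ whose associated basic neighborhood is contained in $U_n$. Setting $H_0 := \langle \bigcup_n F_n \rangle$ produces a countable subgroup of $H$ with $T_{H_0} \supseteq T_H$, while the reverse inclusion $T_{H_0} \subseteq T_H$ is automatic from $H_0 \subseteq H$. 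Thus $T_{H_0} = T_H$, and the Comfort--Ross correspondence recalled immediately before the statement (injectivity of $H \mapsto T_H$ on subgroups of $\mathrm{Hom}(G,\T)$) forces $H = H_0$, hence $H$ is countable.
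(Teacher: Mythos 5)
The paper gives no proof of this theorem at all --- it is imported verbatim from Comfort--Ross \cite{CR64} --- so there is nothing internal to compare against and your argument must stand on its own. Parts (a) and (b) do: the reduction of Hausdorffness to $\bigcap_{v\in H}\ker v=\{0\}$ is exactly right, and in (b) the passage to a countable $H_0\leq H$ with $T_{H_0}=T_H$ followed by the identification $\widehat{(G,T_H)}=H$ is the standard route (be aware that this identification is itself the deep part of \cite{CR64}, so (b) is not self-contained --- which is acceptable for a quoted classical result). One caveat you inherit from the statement itself: metrizability presupposes Hausdorffness, so the ``if'' direction of (b) tacitly assumes that $H$ separates the points of $G$; a countable non-separating $H$ gives a non-metrizable $T_H$.

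The one genuine gap is in the total boundedness step. From $S+(V_1\times\cdots\times V_n)=\T^n$ you cannot conclude that the lifts of $S\cap\phi(G)$ suffice: for $g\in G$ you get $\phi(g)=s+w$ with $s\in S$ and $w\in V:=V_1\times\cdots\times V_n$, but that $s$ need not lie in $\phi(G)$, and no element of $S\cap\phi(G)$ need be within $V$ of $\phi(g)$. Concretely, take $G=\Z$, $H=\langle v\rangle$ with $v(k)=k\alpha$ for an irrational $\alpha$, $V=(-\varepsilon,\varepsilon)$ and $S=\{0,\varepsilon,2\varepsilon,\dots\}$: then generically $S\cap\phi(\Z)=\{0\}$, so $F+U=U\neq\Z$. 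What you should use instead is that every subset of a compact group is totally bounded \emph{with a net chosen inside the subset}: pick a symmetric $V'$ with $V'+V'\subseteq V$, cover the compact set $\overline{\phi(G)}$ by finitely many translates $x_1+V',\dots,x_k+V'$, and for each $i$ such that $(x_i+V')\cap\phi(G)\neq\emptyset$ choose $f_i\in G$ with $\phi(f_i)\in x_i+V'$. Then any $\phi(g)$ lies in some $x_i+V'$ that also contains $\phi(f_i)$, whence $\phi(g-f_i)\in V'-V'\subseteq V$ and $g\in f_i+\phi^{-1}(V)\subseteq f_i+U$ (after replacing $U$ by the smaller $\phi^{-1}(V')$ if one insists on the original $V_i$). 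With this repair the proof is complete.
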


The following two notions will be often used in the paper.

\begin{defin}
A topological abelian group $X$ is said to be:
\begin{itemize}
\item[(i)] \emph{maximally almost periodic} (briefly, \emph{MAP}) if $\wh X$ separates the points of $X$; 
\item[(ii)] \emph{minimally almost periodic} (briefly, \emph{MinAP}) if $\wh X=\{0\}$. 
\end{itemize}
\end{defin}

We recall that two group topologies $\tau_1$ and $\tau_2$ on an abelian group $X$ are \emph{compatible} if they have the same characters, that is, $\widehat{(X,\tau_1)}=\widehat{(X,\tau_2)}$.

If $X=(X,\tau)$ is a topological abelian group, denote by $\tau^+$ its \emph{Bohr topology}, that is, the finest totally bounded group topology on $X$ coarser than $\tau$ (indeed, $\tau^+=T_{\wh X}$); we denote $X$ endowed with its Bohr topology also by $X^+$ and we call $\tau^+$ also the \emph{Bohr modification} of $\tau$. Clearly, $\tau$ and $\tau^+$ are compatible. Moreover, 
\begin{itemize}
\item[(i)] $\tau$ is MAP if and only if $\tau^+$ is Hausdorff; 
\item[(ii)] $\tau$ is MinAP if and only if $\tau^+$ is indiscrete. 
\end{itemize}

A subgroup $H$ of $(X,\tau)$ is: 
\begin{itemize}
\item[(a)] \emph{dually closed} if $H$ is $\tau^+$-closed (or, equivalently, $X/H$ is MAP); 
\item[(b)] \emph{dually embedded} if every $\chi \in\wh H$ can be extended to $X$. 
\end{itemize}

Clearly, dually closed implies closed, since $H\leq\overline H^\tau\leq \overline H^{\tau^+}$. 

\begin{fact}\label{lcafact}
Let $X$ be a locally compact abelian group. Then:
\begin{itemize}
\item[(i)] every closed subgroup $H$ of $X$ is dually closed, i.e., $X/H$ is MAP;
\item[(ii)] in particular, every locally compact abelian group is MAP, and 
\item[(iii)] $X$ and $X^+$ have the same closed subgroups;
\item[(iv)] consequently, $X$ is separable if and only if $X^+$ is separable.
\end{itemize}
\end{fact}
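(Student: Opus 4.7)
The plan is to establish the four items in the order (ii), (i), (iii), (iv), since each subsequent statement is almost a formal consequence of its predecessors. The crucial input is (ii), which asserts that the characters of a locally compact abelian group separate its points. I would deduce it directly from Pontryagin duality: for any LCA group $X$, the canonical evaluation map $X\to \widehat{\widehat X}$ is a topological isomorphism, and in particular injective, which is exactly the MAP property of $X$. This is the single non-trivial input to the whole fact, and would be cited as standard.

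For (i), given a closed subgroup $H$ of $X$, the quotient $X/H$ is again a Hausdorff locally compact abelian group (standard: a quotient of an LCA group by a closed subgroup is LCA). Applying (ii) to $X/H$ gives that $X/H$ is MAP, and by the equivalence of ``dually closed'' with ``$X/H$ is MAP'' recalled just before the statement, $H$ is dually closed. For (iii), one inclusion is automatic from the construction of the Bohr modification: since $\tau^+\subseteq \tau$, every $\tau^+$-closed subset of $X$ is $\tau$-closed. The reverse inclusion for subgroups is exactly (i): a $\tau$-closed subgroup is dually closed, hence $\tau^+$-closed.

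For (iv), if $D\subseteq X$ is countable and $\tau$-dense, then $\overline{D}^{\tau^+}\supseteq \overline{D}^\tau = X$ since closures only enlarge under passage to a coarser topology, so $D$ is also $\tau^+$-dense. Conversely, if $D$ is a countable $\tau^+$-dense subset of $X$, I would replace it by the countable subgroup $\langle D\rangle$ and consider its $\tau$-closure $\overline{\langle D\rangle}^\tau$: this is a $\tau$-closed subgroup of $X$, hence by (iii) also $\tau^+$-closed, and since it contains $D$ it contains $\overline{D}^{\tau^+}=X$, so $\langle D\rangle$ is already $\tau$-dense and $(X,\tau)$ is separable.

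In summary, the only genuine obstacle is step (ii), which rests on Pontryagin duality; the remaining implications are routine consequences of the compatibility $\widehat{(X,\tau)} = \widehat{(X,\tau^+)}$ together with the inclusion $\tau^+\subseteq \tau$, and require no further input specific to local compactness beyond what enters in (ii).
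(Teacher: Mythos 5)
Your proof is correct. Note that the paper itself offers no proof of this Fact: it is stated as standard folklore about locally compact abelian groups, so there is no argument of the authors' to compare yours against. Your route is the natural one: the only substantive input is (ii), which you correctly ground in Pontryagin duality (injectivity of the evaluation map $X\to\widehat{\widehat X}$), and (i) then follows by applying (ii) to the Hausdorff LCA quotient $X/H$ together with the equivalence ``$H$ dually closed $\Leftrightarrow$ $X/H$ MAP'' recalled before the statement. (The paper's ``in particular'' suggests reading (ii) as the case $H=\{0\}$ of (i), but the two are interderivable and your order is logically clean.) Item (iii) is exactly the combination of $\tau^+\subseteq\tau$ with (i), and in (iv) you correctly handle the only delicate direction by replacing the countable $\tau^+$-dense set $D$ with the countable subgroup $\langle D\rangle$, so that (iii) --- which concerns closed \emph{subgroups}, not arbitrary closed sets --- becomes applicable to $\overline{\langle D\rangle}^{\tau}$. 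No gaps.
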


For a topological abelian group $X$ and a subgroup $L$ of $X$, the \emph{weak topology} $\sigma(\widehat X, L)$ of the dual $\widehat X$ is the totally bounded group topology of $\widehat X$ generated by the elements of $L$ considered as characters of $\widehat X$; namely, for every $x\in L$, consider $\xi_x:\wh{X}\to\T$ defined by $\xi_x(\chi)=\chi(x)$ for every $\chi\in\wh{X}$. A local base of $\sigma(\wh{X},L)$ is given by the finite intersections of the sets $\xi_x^{-1}(U)$, where $x\in L$ and $U$ is an open neighborhood of $0$ in $\T$. 
Clearly, if $L_1\leq L_2$, then $\sigma(\wh X,L_1)\leq\sigma(\wh X,L_2)$. 

Note that the weak topology $\sigma(\wh X,X)$ is coarser than the compact-open topology on $\wh X$.
If $L$ separates the points of $\wh X$ (e.g, when $L$ is dense in $X$, or when $L=X$), then $\sigma(\wh X,L)$ is precompact.

\begin{fact}\label{sigma=+}
If $X$ is a reflexive topological abelian group, then $\sigma(\widehat X, X)$ coincides with the Bohr topology of $\widehat X$. 
\end{fact}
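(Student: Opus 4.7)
The plan is to unpack the two sides of the claimed equality and observe they are generated by the same family of characters once reflexivity is invoked.

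First I would recall that, by the definition of the Bohr modification recalled just before the fact, for any topological abelian group $Y$ its Bohr topology is $\tau^+ = T_{\widehat Y}$, that is, the weakest group topology on $Y$ rendering all elements of $\widehat Y$ continuous. Applied to $Y = \widehat X$, this means the Bohr topology of $\widehat X$ equals $T_{\widehat{\widehat X}}$, the totally bounded topology on $\widehat X$ generated by the family of \emph{all} characters of $\widehat X$.

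Next I would use the reflexivity hypothesis: the canonical evaluation map $\alpha_X \colon X \to \widehat{\widehat X}$, $x \mapsto \xi_x$ with $\xi_x(\chi) = \chi(x)$, is a topological isomorphism. In particular every character of $\widehat X$ is of the form $\xi_x$ for a unique $x \in X$. Hence the generating family $\widehat{\widehat X}$ appearing in $T_{\widehat{\widehat X}}$ is, as a set of characters of $\widehat X$, exactly $\{\xi_x : x \in X\}$.

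Finally I would compare with the definition of $\sigma(\widehat X, X)$ given above: it is precisely the totally bounded group topology on $\widehat X$ generated by the maps $\xi_x$, $x \in X$. Since both topologies are, by construction, the weakest group topology on $\widehat X$ making the same family $\{\xi_x : x \in X\}$ continuous, they coincide, proving $\sigma(\widehat X, X) = (\widehat X)^+$. There is no real obstacle here beyond bookkeeping; the only subtle point is to notice that reflexivity is used \emph{only} through surjectivity of $\alpha_X$ (so that no characters of $\widehat X$ other than evaluations at points of $X$ are lost), while the continuity/openness parts of reflexivity are not needed for this identification of topologies on $\widehat X$.
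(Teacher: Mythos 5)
Your argument is correct: both topologies are by definition the weakest (totally bounded) group topology on $\widehat X$ making a certain family of characters continuous, and surjectivity of the evaluation map $\alpha_X$ identifies the two generating families $\widehat{\widehat X}$ and $\{\xi_x : x\in X\}$. The paper states this as a Fact without proof, and your unwinding of the definitions is exactly the intended justification, including the observation that only surjectivity of $\alpha_X$ is needed.
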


We recall that a sequence $\vs$ in an abelian group $G$ is a \emph{$T$-sequence} (respectively, \emph{$TB$-sequence}) if there exists a Hausdorff (respectively, precompact) group topology $\tau$ on $G$ such that $\vs$ is a null sequence in $(G,\tau)$.

\begin{lemma}\label{vv}
Let $X$ be a topological abelian group and $\vs\in\wh X^\N$. Then:
\begin{itemize}
\item[(i)] for a subgroup $L$ of $X$, $v_n(x)\to 0$ in $\T$ for every $x\in L$ if and only if $v_n\to 0$ in $\sigma(\wh X,L)$;
\item[(ii)] if $s_\vs(X)$ is dense in $X$, then $\vs$ is a TB-sequence.
\end{itemize}
\end{lemma}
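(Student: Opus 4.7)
For part (i), the plan is to simply unwind the definition of the weak topology $\sigma(\wh X, L)$. A local base at $0$ in this topology consists of finite intersections of sets of the form $\xi_x^{-1}(U)$ for $x\in L$ and $U$ a neighborhood of $0$ in $\T$. So $v_n\to 0$ in $\sigma(\wh X,L)$ means that for each $x\in L$ and each such $U$, eventually $v_n\in \xi_x^{-1}(U)$, i.e., $\xi_x(v_n)=v_n(x)\in U$. Since the family of neighborhoods $U$ of $0$ in $\T$ determines convergence in $\T$, this is exactly the condition $v_n(x)\to 0$ in $\T$ for every $x\in L$. The argument is symmetric and should take just a few lines.

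For part (ii), the plan is to use (i) with the specific choice $L:=s_\vs(X)$ and then exhibit the topology $\sigma(\wh X, L)$ as a witness that $\vs$ is a $TB$-sequence. By the very definition of $s_\vs(X)$, we have $v_n(x)\to 0$ in $\T$ for every $x\in s_\vs(X)$, so by (i), $v_n\to 0$ in $\sigma(\wh X, s_\vs(X))$. It remains to observe that this topology on $\wh X$ is precompact. This is where I invoke the remark recalled just before the lemma: if $L$ is dense in $X$, then $L$ separates points of $\wh X$ (any character of $X$ vanishing on a dense subgroup vanishes everywhere, by continuity), and under that separation hypothesis $\sigma(\wh X, L)$ is already noted to be precompact. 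Since $s_\vs(X)$ is assumed dense in $X$, this applies and finishes the proof.

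The main (minor) subtlety is ensuring the separation-of-points argument: I should explicitly note that density of $s_\vs(X)$ in $X$ implies that if two characters $\chi_1,\chi_2\in\wh X$ agree on $s_\vs(X)$ then $\chi_1-\chi_2$ is a continuous character of $X$ vanishing on a dense set, hence is zero. No genuine obstacle is expected; both parts are essentially formal consequences of the definitions and of the separation/precompactness facts recalled in the preceding paragraph.
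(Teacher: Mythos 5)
Your proposal is correct and follows the paper's own argument: part (i) is a direct unwinding of the definition of $\sigma(\wh X,L)$, and part (ii) applies (i) with $L=s_\vs(X)$ together with the fact, recalled just before the lemma, that $\sigma(\wh X,L)$ is precompact when $L$ is dense in $X$. Your extra remark justifying the separation of points via continuity is a harmless elaboration of what the paper leaves implicit.
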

\begin{proof}
(i) follows from the definition of $\sigma(\wh X,L)$. 

(ii) As $s_\vs(X)$ is dense in $X$, then $\sigma(\wh X,s_\vs(X))$ is precompact. By item (a), $v_n\to 0$ in  $\sigma(\wh X,s_\vs(X))$, hence $\vs$ is a $TB$-sequence.
\end{proof}

Let $G$ be a discrete abelian group. For a sequence $\vs\in G^\N$, the group topology 
\begin{equation}\label{sigmaeq}
\sigma_\vs:=T_{s_\vs(\wh G)}
\end{equation}
is the finest totally bounded group topology on $G$ such that $\vs$ is a null sequence in $(G,\sigma_\vs)$.

\begin{fact}\emph{\cite[Lemma 3.1, Proposition 3.2]{DMT05}}
Let $G$ be a discrete abelian group and $\vs\in G^\N$.  The following conditions are equivalent:
\begin{itemize}
\item[(i)] $\vs$ is a $TB$-sequence;
\item[(ii)] $\sigma_\vs$ is Hausdorff;
\item[(ii)] $s_\vs(\wh G)$ is dense in $\wh G$.
\end{itemize} 
\end{fact}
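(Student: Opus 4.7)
The plan is to go around the cycle (i) $\Rightarrow$ (iii) $\Rightarrow$ (ii) $\Rightarrow$ (i), using the Comfort--Ross correspondence between totally bounded group topologies on $G$ and subgroups of $\wh G$, together with the Pontryagin duality principle that a subgroup $H\leq\wh G$ is dense in the compact group $\wh G$ if and only if $H$ separates the points of $G$ (equivalently, $H^\perp=\{0\}$).

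For (i) $\Rightarrow$ (iii), assume $\vs$ is a $TB$-sequence, so fix a precompact Hausdorff group topology $\tau$ on $G$ with $v_n\to 0$ in $(G,\tau)$. By Comfort--Ross, $\tau=T_H$ for some subgroup $H\leq\wh G$, and by Theorem \ref{def}(a) the Hausdorffness of $\tau$ gives that $H$ separates the points of $G$, hence $H$ is dense in $\wh G$. On the other hand, $v_n\to 0$ in $T_H$ forces $\chi(v_n)\to 0$ for every $\chi\in H$ (this is essentially Lemma \ref{vv}(i) applied to $X=\wh G$ and $L=H$, using $G=\wh{\wh G}$ by Pontryagin duality), so $H\subseteq s_\vs(\wh G)$. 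Since $H$ is dense, so is $s_\vs(\wh G)$.

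For (iii) $\Rightarrow$ (ii), put $H:=s_\vs(\wh G)$. By the density criterion, $H$ being dense in $\wh G$ means $H$ separates the points of $G$, and therefore $\sigma_\vs=T_H$ is Hausdorff by Theorem \ref{def}(a). For (ii) $\Rightarrow$ (i), note that $\sigma_\vs$ is totally bounded by construction (being of the form $T_H$), so if it is Hausdorff it is precompact. It remains to observe that $v_n\to 0$ in $\sigma_\vs$: a basic $\sigma_\vs$-neighborhood of $0$ has the form $\bigcap_{i=1}^k\chi_i^{-1}(U)$ with $\chi_1,\ldots,\chi_k\in s_\vs(\wh G)$, and since $\chi_i(v_n)\to 0$ in $\T$ for each $i$, $v_n$ eventually lies in this intersection. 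Thus $\sigma_\vs$ witnesses that $\vs$ is a $TB$-sequence.

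No step looks genuinely hard: the only point requiring some care is the translation between ``$H$ dense in $\wh G$'' and ``$H$ separates the points of $G$'', which rests on Pontryagin duality for the discrete--compact pair $(G,\wh G)$. One may alternatively shortcut (iii) $\Rightarrow$ (i) by invoking Lemma \ref{vv}(ii) directly with $X=\wh G$, observing that $\vs\in G^\N$ is a sequence in $\wh X = \wh{\wh G} = G$, so that density of $s_\vs(\wh G)$ in $\wh G$ immediately yields that $\vs$ is a $TB$-sequence.
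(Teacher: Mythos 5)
Your proof is correct. Note that the paper itself gives no proof of this Fact (it is quoted from \cite[Lemma 3.1, Proposition 3.2]{DMT05}), so there is no in-paper argument to compare against; your cycle (i)$\Rightarrow$(iii)$\Rightarrow$(ii)$\Rightarrow$(i), resting on the Comfort--Ross correspondence, the equivalence ``$H$ dense in $\wh G$ iff $H$ separates the points of $G$'' for the discrete--compact dual pair, and the observation that $v_n\to 0$ in $T_H$ exactly when $\chi(v_n)\to 0$ for all $\chi\in H$, is the standard route and matches the tools (Theorem \ref{def}, Lemma \ref{vv}) the paper itself sets up for this purpose.
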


\subsection{Useful folklore results}

We recall the following basic properties that will be used in the paper. Although most of them are well known, we offer proofs for reader's convenience.  

\begin{lemma}\label{perp}
Let $X$ be a topological abelian group and $H$ a subgroup of $X$. Then:
\begin{itemize}
\item[(i)] $\wh{X/H}$ is algebraically isomorphic to $H^\perp$;
\item[(ii)] $\wh X/H^\perp$ is algebraically isomorphic to a subgroup of $\wh H$.
\end{itemize}
\end{lemma}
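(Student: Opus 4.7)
The plan is to use the standard machinery of the first isomorphism theorem applied to two natural maps, observing that the statement asks only for algebraic isomorphisms (no claim about topologies on the dual groups).

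For (i), I would work with the quotient map $\pi : X \to X/H$ and define
\[
\Phi : \wh{X/H} \to H^\perp, \qquad \Phi(\chi) = \chi\circ\pi.
\]
Since $\pi(H)=\{0\}$, each $\chi\circ\pi$ is a continuous character of $X$ vanishing on $H$, so $\Phi$ is well defined and is plainly a group homomorphism. Injectivity is immediate because $\pi$ is surjective (so $\chi\circ\pi=0$ forces $\chi=0$). For surjectivity, pick $\psi\in H^\perp$; since $\psi$ is a group homomorphism vanishing on $H$, it factors uniquely through $X/H$ as $\psi = \chi\circ\pi$ for some group homomorphism $\chi: X/H\to\T$. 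The continuity of $\chi$ follows from the universal property of the quotient topology: $\chi\circ\pi=\psi$ is continuous and $\pi$ is a topological quotient, hence $\chi$ is continuous, so $\chi\in\wh{X/H}$ and $\Phi(\chi)=\psi$.

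For (ii), I would use the restriction map
\[
r : \wh X \to \wh H, \qquad r(\chi) = \chi\rs H.
\]
Restrictions of continuous homomorphisms to subgroups equipped with the induced topology are again continuous, so $r$ lands in $\wh H$, and it is obviously a group homomorphism. Its kernel is precisely $\{\chi\in\wh X : \chi(H)=\{0\}\} = H^\perp$. By the first isomorphism theorem for groups,
\[
\wh X / H^\perp \;\cong\; r(\wh X) \;\leq\; \wh H,
\]
giving the desired algebraic embedding.

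There is no real obstacle here; the only minor point that deserves a word is that in (i) we really need the topological quotient property to upgrade the algebraic factorization $\psi = \chi\circ\pi$ to continuity of $\chi$, and in (ii) we are deliberately not claiming surjectivity of $r$ — that would amount to $H$ being dually embedded, which is a separate property not assumed in the statement.
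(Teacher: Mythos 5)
Your proof is correct and follows essentially the same route as the paper: part (i) via the map $\chi\mapsto\chi\circ\pi$ whose image is $H^\perp$, and part (ii) via the restriction homomorphism $\wh X\to\wh H$ with kernel $H^\perp$ and the first isomorphism theorem. You simply spell out the injectivity/surjectivity and continuity details that the paper leaves implicit.
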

\begin{proof}
(i) Let $\psi:\wh{X/H}\to \wh X$ be defined by $\chi\mapsto \chi\circ\pi$, where $\pi:X\to X/H$ is the canonical projection. Then $\psi$ is injective and its image is $H^\perp$.

(ii) Let $\rho:\wh X\to \wh H$ be defined by $\chi\mapsto\chi\restriction_H$. Then $\ker\rho=H^\perp$ and so we get the thesis.
\end{proof}

\begin{lemma}\label{HMP}\emph{\cite{DPS90}}
A compact abelian group $K$ is separable if and only if $w(K)\leq\mathfrak c$.
\end{lemma}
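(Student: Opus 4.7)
\noindent The plan is to use the Pontryagin-duality identity $w(K) = |\wh K|$, valid for every infinite compact abelian group $K$, to reduce the statement to an equality between cardinal invariants of $\wh K$, and then treat the two implications separately.

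For the forward implication, I would assume $K$ separable with countable dense subset $D\subseteq K$. Since every $\chi\in\wh K$ is continuous and $\T$ is Hausdorff, two characters agreeing on $D$ must coincide, so the restriction map $\wh K\to\T^D$, $\chi\mapsto\chi\restriction_D$, is injective. This yields $|\wh K|\leq |\T|^{|D|}=\cc^{\aleph_0}=\cc$. In the finite case there is nothing to prove; otherwise $w(K)=|\wh K|\leq\cc$.

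For the reverse implication, assume $w(K)\leq\cc$. The key input will be the Ivanovsky--Kuzminov--Vilenkin theorem, which asserts that every compact group is dyadic and admits a continuous surjection from the Cantor cube $\{0,1\}^{w(K)}$. Thus $K$ is a continuous image of $\{0,1\}^{\cc}$, and the latter is separable by the Hewitt--Marczewski--Pondiczery theorem (since $\cc=2^{\aleph_0}$). Continuous images of separable spaces are separable, so $K$ is separable.

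The main obstacle is marshalling the dyadicity theorem, which is non-trivial. An abelian-specific alternative would use Pontryagin duality to write $\wh K = \bigcup_{i\in I}A_i$ as a directed union of at most $\cc$ countable subgroups, whence $K$ is a closed subgroup of $\prod_i \wh{A_i}$, a separable space by HMP; but deducing separability of the closed subgroup $K$ from separability of the ambient product is not immediate, and ultimately requires the same kind of machinery, so it does not substantially simplify the argument.
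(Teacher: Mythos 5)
Your proof is correct, but the reverse implication takes a genuinely different route from the paper's. The paper also reduces to $|\wh K|=w(K)\leq\cc$ and to the Hewitt--Marczewski--Pondiczery theorem, but it gets the needed continuous surjection onto $K$ by purely abelian means: it embeds the discrete dual $X=\wh K$ into its divisible hull $D(X)=\bigoplus_{i\in I}D_i$ with each $D_i$ countable divisible and $|I|\leq\cc$, and then dualizes this \emph{embedding} to obtain a continuous \emph{surjection} $\prod_{i\in I}\wh{D_i}\to K$ from a product of at most $\cc$ metrizable compact groups, which is separable by HMP. You instead invoke the Ivanovskii--Kuzminov dyadicity theorem to realize $K$ as a continuous image of $\{0,1\}^{\cc}$. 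Both are valid; yours uses heavier topological machinery but applies verbatim to non-abelian compact groups, while the paper's uses only Pontryagin duality plus the structure theorem for divisible abelian groups. Note that the difficulty you flag in your sketched ``abelian-specific alternative'' (separability does not pass to closed subgroups of a separable product) is precisely what the divisible-hull trick circumvents: an embedding of duals dualizes to a quotient map, and separability \emph{does} pass to continuous images. Your forward implication, via the injection $\wh K\to\T^D$, is fine and in fact more detailed than the paper's appeal to the general fact that separable regular spaces have weight at most $\cc$.
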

\begin{proof} 
The inequality $w(K)\leq\mathfrak c$ holds for every separable regular topological space $K$. 
 
 Assume that $w(K)\leq\mathfrak c$. The discrete abelian group $X=\wh K$ has size $|X|=w(K)\leq\mathfrak c$. Consider the embedding $i:X\to D(X)$, where $D(X)$ is the divisible hull of $X$. Then $|D(X)|\leq\mathfrak c$ and $D(X)=\bigoplus_{i\in I}D_i$, for some countable divisible abelian groups $D_i$ and a set of indices $I$ with $|I|\leq \mathfrak c$. Therefore, $\wh i:\prod_{i\in I}\wh D_i\to \wh X=K$ is a surjective continuous homomorphism and each $\wh D_i$ is a metrizable compact abelian group. By Hewitt-Marczewski-Pondiczery Theorem, since $|I|\leq\mathfrak c$, we have that $\prod_{i\in I}\wh D_i$ is separable, hence $K$ is separable as well.
\end{proof}

\begin{lemma}\label{LinjPre}
Let $X$ be a precompact abelian group. Then $\{0\}$ is $\G$ if and only if there exists a continuous injection $X\to \T^\N$.  
\end{lemma}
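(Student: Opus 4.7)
The plan is to prove each implication separately; the easy direction uses metrizability of $\T^\N$, while the harder one exploits the Comfort-Ross description of precompact topologies to extract a countable separating family of characters.

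For the sufficiency $(\Leftarrow)$: if $\phi\colon X\to\T^\N$ is a continuous injection, then $\{0\}_X=\phi^{-1}(\{0\}_{\T^\N})$. Since $\T^\N$ is metrizable, its singletons are $G_\delta$, and the preimage of a $G_\delta$ set under a continuous map is $G_\delta$. Thus $\{0\}$ is $G_\delta$ in $X$.

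For the necessity $(\Rightarrow)$: by the Comfort-Ross theorem (Theorem \ref{def}), precompactness of $X$ forces its topology to be $T_{\wh X}$, so a local base at $0$ is given by the sets $\bigcap_{v\in F}v^{-1}(V)$ with $F\subseteq\wh X$ finite and $V$ a neighborhood of $0$ in $\T$. Write $\{0\}=\bigcap_{n\in\N}U_n$ with each $U_n$ an open neighborhood of $0$, and for each $n$ choose a finite $F_n\subseteq\wh X$ and a neighborhood $V_n$ of $0$ in $\T$ with $\bigcap_{v\in F_n}v^{-1}(V_n)\subseteq U_n$. Setting $F:=\bigcup_n F_n$, this is a countable subset of $\wh X$, and since $\ker v\subseteq v^{-1}(V_n)$ for every $v\in F_n$, one gets
\[
\{0\}\subseteq\bigcap_{v\in F}\ker v\subseteq\bigcap_{n\in\N}\bigcap_{v\in F_n}v^{-1}(V_n)\subseteq\bigcap_{n\in\N}U_n=\{0\},
\]
so $F$ separates the points of $X$.

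Enumerating $F=\{v_n:n\in\N\}$, the map $\phi\colon X\to\T^\N$ defined by $x\mapsto(v_n(x))_{n\in\N}$ is continuous since each coordinate is, and its kernel is $\bigcap_{n\in\N}\ker v_n=\{0\}$; hence $\phi$ is the required continuous injection. The only delicate point is the translation of the $G_\delta$ condition into the language of a local base of $T_{\wh X}$, and this is immediate from the form of the basic neighborhoods of $0$; note also that one does not need the full dual $\wh X$ to be countable, but only the existence of a countable subfamily that separates points.
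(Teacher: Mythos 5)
Your proof is correct and follows essentially the same route as the paper's: the easy direction uses that points are $G_\delta$ in the metrizable group $\T^\N$, and the converse extracts from a $G_\delta$ representation of $\{0\}$ a countable family of characters whose kernels intersect trivially, via the Comfort--Ross form of the prebasic neighborhoods of $0$ in a precompact (totally bounded) topology. Your version is in fact slightly more careful than the paper's, which replaces each $U_n$ by a single prebasic set rather than a finite intersection, but the underlying idea is identical.
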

\begin{proof} 
If there exists a continuous injection $X\to \T^\N$, then $\{0\}$ is $\G$ in $X$, as it is  $\G$ in $\T^\N$. 

Assume now that $\{0\}=\bigcap_{n\in\N}U_n$, where each $U_n$ is an open subset of $X$, and we can assume that $U_n$ is in the prebase of the neighborhoods of $0$ in $X$.
So for every $n\in\N$ there exist $v_n\in\wh X$ and an open neighborhood $V_n$ of $0$ in $\T$ such that $V_n$ does not contain any non trivial subgroup of $\T$ such that $U_n=v_n^{-1}(V_n)$. Then $\{0\}=\bigcap_{n\in\N}\ker v_n$. Hence, $j:X\to \T^\N$ defined by $j(x)=(v_n(x))_{n\in\N}$ is a continuous injective homomorphism.
\end{proof}

\begin{teo}\label{Tinjection}
Let $X$ be a locally compact abelian group. Then $X$ is metrizable with $|X|\leq \cc$ if and only if there exists a continuous injective homomorphism $X\to\T^\N$.
\end{teo}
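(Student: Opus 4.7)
The plan is to prove both implications by constructing explicit continuous maps: the easy direction by pulling back metrizability from $\T^\N$, and the harder direction by producing a countable point-separating family of characters via Pontryagin duality.

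Starting with a continuous injective homomorphism $\phi\colon X\to\T^\N$, one gets $|X|\leq|\T^\N|=\cc$ at once. Since $\{0\}$ is $G_\delta$ in the metrizable space $\T^\N$, its preimage $\{0\}=\phi^{-1}(\{0\})$ is $G_\delta$ in $X$; for a Hausdorff topological group this is equivalent to first countability, so by Birkhoff--Kakutani $X$ is metrizable.

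For the remaining direction, assume $X$ is metrizable LCA with $|X|\leq\cc$. The goal is to find a countable family $\{v_n\}_{n\in\N}\subseteq\widehat X$ separating the points of $X$, so that $x\mapsto(v_n(x))_{n\in\N}$ is the desired continuous injection into $\T^\N$. Since $X$ is LCA and hence MAP by Fact \ref{lcafact}, any dense subset $D$ of $\widehat X$ separates points of $X$: if $\chi(x)=0$ for every $\chi\in D$, then continuity of the evaluation $\widehat X\to\T$, $\chi\mapsto\chi(x)$, together with density of $D$, forces $\chi(x)=0$ for every $\chi\in\widehat X$, hence $x=0$. So it suffices to exhibit a countable dense subset of $\widehat X$. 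Pontryagin duality converts the metrizability of $X$ into $\sigma$-compactness of $\widehat X$; write $\widehat X=\bigcup_{n\in\N}K_n$ with each $K_n$ compact. Duality also identifies $\widehat{K_n}\cong X/K_n^\perp$, giving $w(K_n)=|\widehat{K_n}|\leq|X|\leq\cc$. By Lemma \ref{HMP} each $K_n$ is then separable, and a union of countable dense subsets of the $K_n$'s provides the required countable dense subset of $\widehat X$.

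The main obstacle is the step converting the hypothesis $|X|\leq\cc$ into a weight bound $w(K_n)\leq\cc$ on each compact piece of $\widehat X$; this is precisely where the duality identification $\widehat{K_n}\cong X/K_n^\perp$ is crucial, after which Lemma \ref{HMP} does the final work. The remaining ingredients---$\sigma$-compactness of $\widehat X$ from metrizability of $X$, the MAP property (Fact \ref{lcafact}), and Birkhoff--Kakutani metrization---are classical Pontryagin-duality facts or already recorded in the paper.
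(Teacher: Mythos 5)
Your reduction of the hard direction to the separability of $\widehat X$ is a legitimate strategy (and consistent with Corollary \ref{T12} and Theorem \ref{Titemi} elsewhere in the paper), but the step that is supposed to deliver separability has a genuine gap. From the metrizability of $X$ you correctly get that $\widehat X$ is $\sigma$-compact, i.e.\ $\widehat X=\bigcup_n K_n$ with each $K_n$ a compact \emph{subset}; but you then treat the $K_n$ as compact \emph{subgroups}, writing $\widehat{K_n}\cong X/K_n^\perp$ and $w(K_n)=|\widehat{K_n}|$, both of which only make sense for closed subgroups. A $\sigma$-compact LCA group need not be a countable union of compact subgroups: already for $X=\R$ one has $\widehat X\cong\R$, whose only compact subgroup is $\{0\}$, so the decomposition you need cannot be arranged and the weight estimate collapses. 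To repair the argument you would have to invoke the structure theorem for $\widehat X$ (namely $\widehat X\cong\R^n\times H$ with a compact open subgroup $C\leq H$, where $\sigma$-compactness forces $[H:C]\leq\omega$), bound $w(C)\leq w(\widehat X)=w(X)\leq |X|\cdot\omega\leq\cc$ using the metrizability of $X$, and only then apply Lemma \ref{HMP} to $C$ --- at which point you are using essentially the same decomposition that the paper applies directly to $X$ in order to build the injection piecewise (embedding $\R^n$ and a compact open metrizable subgroup into $\T^\N$, extending by divisibility of $\T^\N$, and handling the discrete quotient of size at most $\cc$ separately).

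A smaller point in the easy direction: the claim that ``$\{0\}$ is $G_\delta$'' is equivalent to first countability for an arbitrary Hausdorff topological group is false (any countable non-metrizable precompact group, such as $\Q$ with its Bohr topology, has $G_\delta$ singletons). The implication does hold for locally compact groups, which is the case at hand, so your conclusion stands, but the justification should appeal to local compactness rather than to Hausdorffness alone.
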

\begin{proof} 
If there exists a continuous injective homomorphism $X\to\T^\N$, then clearly $X$ is metrizable and $|X|\leq|\T^\N|=\cc$.

Suppose now that $X$ is metrizable and has cardinality at most $\cc$. It is well known (for example, see \cite{HR79}) that $X=\R^n\times X_0$, where $n\in\N$ and $X_0$ is a locally compact abelian group admitting an open compact (metrizable) subgroup $K$. Clearly, there exist two continuous injective homomorphisms $j_1:\R^n\hookrightarrow\T^\N$ and $j_2:K\hookrightarrow\T^\N$. Therefore, $j_3=(j_1,j_2):\R^n\times K\to\T^\N\times\T^\N\cong\T^\N$ is an injective continuous homomorphism too. Since $\T^\N$ is divisible and $\R^n\times K$ is open in $X$, $j_3$ extends continuously to $\wt{j_3}:X\to \T^\N$. Let $\pi:X\to X/(\R^n\times K)$ be the canonical projection. Since $X/(\R^n\times K)$ is discrete, there exists a continuous injective homomorphism $j_4:X/(\R^n\times K)\to\T^\N$. Let $\varphi=j_4\circ \pi:X\to\T^\N$.
\begin{equation*}
\xymatrix{X\ar[dr]^{\varphi}\ar[d]_{\pi}&\\
X/(\R^n\times K)\ar[r]_{\; \;\;  \;\;\;\; j_4}&\T^\N}
\end{equation*}
Let now $j:X\to \T^\N\times\T^\N\cong\T^\N$ be defined by $j(x)=(\varphi(x),\wt{j_3}(x))$ for every $x\in X$. Then $j$ is continuous since $\varphi$ and $\wt{j_3}$ are continuous. Moreover, $j$ is injective as $j(x)=0$ for some $x\in X$ implies $\varphi(x)=0$ and $\wt{j_3}(x)=0$; therefore, $x\in\R^n\times K$, and so, since $\wt{j_3}\restriction_{\R^n\times K}=j_3$ is injective, one has $x=0$.	
\end{proof}

\section{General permanence properties of characterized subgroups}

Let $X$ be a topological abelian group and denote by $\CH(X)$ the family of all subgroups of $X$ that are characterized.

\medskip
We start by observing that
\begin{equation}\label{ev0}
\text{if}\ \vs\in\wh X^\N\ \ \text{is eventually null, then}\ X=s_\vs(X).
\end{equation}

The following are basic facts on characterized subgroups (see \cite{CTT93,DG13,DK07,DMT05}), we give a proof for reader's convenience.
   	
\begin{lemma}\label{Lbasic}
Let $X$ be a topological abelian group and $\vs\in\widehat{X}^\N$. Then:
\begin{itemize}
\item[(i)] for every subgroup $J$ of $X$, $s_{\vs^*}(J)=s_\vs(X)\cap J$, where $v^*_n=v_n\restriction_{J}$ for every $n\in\N$;
\item[(ii)] $\sv(X)=\su(X)$ if $\us$ is any permutation of $\vs$;
\item[(iii)] $\CH(X)$ is stable under taking finite intersections;
\item[(iv)] $\sv(X)$ is an $\FF$-set (i.e., countable intersection of countable unions of closed subgroups).
\end{itemize}
\end{lemma}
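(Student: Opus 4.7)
Parts (i)--(iii) are routine consequences of the definitions and of elementary properties of convergent sequences in $\T$; the substance of the lemma lies in part (iv).

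For (i), note that $v_n^*(x) = v_n(x)$ for every $x \in J$, so the conditions $v_n^*(x) \to 0$ and $v_n(x) \to 0$ in $\T$ coincide on $J$. Hence $s_{\vs^*}(J) = \{x \in J : v_n(x) \to 0\} = J \cap s_\vs(X)$.

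For (ii), I would invoke the general fact that convergence of a sequence in $\T$ to a prescribed limit is preserved under any permutation of its indices: if $v_n(x) \to 0$ and $u_n = v_{\sigma(n)}$ for a bijection $\sigma : \N \to \N$, then for every neighborhood $U$ of $0$ in $\T$ pick $N_0$ with $v_n(x) \in U$ for all $n \geq N_0$; for $n$ larger than $\max\{\sigma^{-1}(j) : j < N_0\}$ one has $\sigma(n) \geq N_0$, hence $u_n(x) = v_{\sigma(n)}(x) \in U$. Applying the same argument to $\sigma^{-1}$ gives the converse, so $\sv(X) = \su(X)$.

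For (iii), given $H_1 = s_\vs(X)$ and $H_2 = s_\us(X)$, I would form the interleaved sequence $\ws := (v_0, u_0, v_1, u_1, \ldots) \in \wh X^\N$. Since a sequence in $\T$ converges to $0$ if and only if each of its two natural subsequences converges to $0$, we get $w_n(x) \to 0$ in $\T$ iff $x \in H_1 \cap H_2$; hence $H_1 \cap H_2 = s_\ws(X) \in \CH(X)$, and an obvious induction extends this to finite intersections.

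For (iv), fix a countable base $(U_k)_{k \in \N_+}$ of \emph{closed} neighborhoods of $0$ in $\T$ with $\bigcap_{k} U_k = \{0\}$, e.g.\ $U_k = \{t + \Z : t \in [-1/k, 1/k]\}$. Setting
\[
F_{k,N} \;:=\; \bigcap_{n \geq N} v_n^{-1}(U_k) \qquad (k, N \in \N_+),
\]
each $F_{k,N}$ is closed in $X$ by continuity of the $v_n$ together with closedness of $U_k$. Unwinding the definition of convergence,
\[
\sv(X) \;=\; \bigcap_{k \in \N_+}\bigcup_{N \in \N_+} F_{k,N},
\]
which exhibits $\sv(X)$ as an $\FF$-set.

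\textbf{Main obstacle.} There is nothing really deep here; the only point that requires minor care is (iv), where one must use a base of $0$ in $\T$ consisting of \emph{closed} sets (not an arbitrary base), so that the inner building blocks of the $\FF$-description remain closed. The rest of the argument is just a translation of sequence convergence into the Borel hierarchy.
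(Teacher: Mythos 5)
Your proposal is correct and follows essentially the same route as the paper: (i) and (ii) are the same routine observations, (iii) uses the identical interleaving trick, and your sets $F_{k,N}=\bigcap_{n\ge N}v_n^{-1}(U_k)$ are exactly the paper's $\bigcap_{n\ge k}S_{n,m}$ with $S_{n,m}=\{x\in X:\|v_n(x)\|\le 1/m\}$, yielding the same $\FF$-decomposition. No issues.
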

\begin{proof}
Items (i) and (ii) are obvious. To prove (iii), if $\us,\vs\in\wh{X}^\N$, define $\ws=(w_n)$, where $w_{2n}=u_n$ and $w_{2n+1}=v_n$ for every $n\in\N$, hence $\su(X)\cap\sv(X)=s_\ws(X)$. 
To prove (iv), note that 
$\sv(X)=\bigcap_m\bigcup_k\bigcap_{n\ge k} S_{n,m}$, where each $S_{n,m}=\lgr x\in X: \lb v_n(x)\rb\le\frac{1}{m}\rgr$ is a closed subset of $X$. 
\end{proof}

Now we prove that, under suitable hypotheses, the relation of being a characterized subgroup is transitive: 

\begin{prop}\label{NEW:lemma}
Let $X$ be a topological abelian group and $X_0$, $X_1$, $X_2$ subgroups of $X$ with $X_0\leq X_1\leq X_2$ and such that $X_1$ is dually embedded in $X_2$. If $X_0\in \CH(X_1)$ and $X_1\in \CH(X_2)$, then $X_0\in \CH(X_2)$. 
\end{prop}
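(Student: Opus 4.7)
The plan is to combine the two characterizing sequences into a single sequence on $X_2$, using the dual embeddedness of $X_1$ in $X_2$ to lift the characters of $X_1$. Concretely, pick $\us=(u_n)\in \wh{X_1}^\N$ with $X_0=s_\us(X_1)$ and $\vs=(v_n)\in \wh{X_2}^\N$ with $X_1=s_\vs(X_2)$. Since $X_1$ is dually embedded in $X_2$, each $u_n$ admits an extension $\wt u_n\in\wh{X_2}$, and we obtain $\wt\us=(\wt u_n)\in\wh{X_2}^\N$.

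Next, I would form the ``shuffled'' sequence $\ws=(w_n)\in\wh{X_2}^\N$ with $w_{2n}=v_n$ and $w_{2n+1}=\wt u_n$. By Lemma \ref{Lbasic}(iii) (or equivalently by unwinding the definition), one has
\begin{equation*}
s_\ws(X_2)=s_\vs(X_2)\cap s_{\wt\us}(X_2)=X_1\cap s_{\wt\us}(X_2).
\end{equation*}
Now apply Lemma \ref{Lbasic}(i) to the subgroup $X_1\leq X_2$ with the sequence $\wt\us$: since $\wt u_n\rs_{X_1}=u_n$ for every $n\in\N$, we get $s_{\wt\us}(X_2)\cap X_1=s_\us(X_1)=X_0$. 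Substituting, $s_\ws(X_2)=X_0$, so $X_0\in\CH(X_2)$, as required.

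The argument is essentially a bookkeeping exercise once the extensions $\wt u_n$ are in hand, so the only real hypothesis that is used in a nontrivial way is the dual embeddedness of $X_1$ in $X_2$; without it there is no guarantee that the characters $u_n$ on $X_1$ arise as restrictions of characters of $X_2$, and the interleaving trick collapses. I do not foresee any further obstacle.
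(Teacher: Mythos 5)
Your proof is correct and is essentially identical to the paper's own argument: both lift the characters of $X_1$ via dual embeddedness, interleave them with the sequence characterizing $X_1$ in $X_2$, and conclude using Lemma \ref{Lbasic}(i) that the shuffled sequence characterizes $X_0$ in $X_2$. No issues.
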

\begin{proof}
Let $\vs\in\wh{X_1}^\N$ such that $X_0=s_\vs(X_1)$ and let $\ws\in\wh{X_2}^\N$ such that $X_1=s_\ws(X_2)$. 
As $X_1$ is dually embedded in $X_2$, $v_n$ extends to a character $v_n^*$ of $X_2$ for every $n\in\N$, so let $\vs^*=(v_n^*)\in\wh{X_2}^\N$.  Define $\ws^*\in\wh{X_2}^\N$ by letting $w^*_{2n}=v^*_n$ and $w^*_{2n+1}=w_n$ for every $n\in\N$. 
Then, by Lemma \ref{Lbasic}(i), $$s_{\ws^*}(X_2)=s_{\vs^*}(X_2)\cap s_\ws(X_2)=s_{\vs^*}(X_2)\cap X_1=s_\vs(X_1)=X_0,$$ so $X_0\in\CH(X_2)$, as required.
\end{proof}

Clearly, two compatible group topologies have the same characterized subgroups:

\begin{lemma}\label{compatible}
If $\tau_1$ and $\tau_2$ are compatible group topologies on an abelian group $X$, then $\CH(X,\tau_1)=\CH(X,\tau_2)$.
\end{lemma}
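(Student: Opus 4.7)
The plan is to observe that the statement is essentially a tautology once one unpacks the definitions carefully. The key point is that the set $s_\vs(X)$, as defined at the beginning of the introduction, depends only on the sequence $\vs$ of characters and on convergence in $\T$; the topology on $X$ does not enter the definition at all. The topology on $X$ only intervenes in determining which homomorphisms $X\to\T$ count as characters, i.e., in the formation of $\widehat X$.

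First I would record the two (trivial) observations: (a) for any fixed sequence $\vs$ of homomorphisms $X\to\T$, the subgroup $\{x\in X:v_n(x)\to 0\text{ in }\T\}$ is defined purely in terms of $\vs$ and the topology of $\T$, so it is a function of $\vs$ alone; (b) compatibility gives $\widehat{(X,\tau_1)}=\widehat{(X,\tau_2)}$, so the pool of admissible sequences $\vs\in\widehat X^\N$ is literally the same for both topologies.

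Then the proof proper is symmetric: take $H\in\CH(X,\tau_1)$ and pick $\vs\in\widehat{(X,\tau_1)}^\N$ with $H=s_\vs(X)$. By (b), $\vs\in\widehat{(X,\tau_2)}^\N$, and by (a) the set $s_\vs(X)$ computed with respect to $\tau_2$ equals the set computed with respect to $\tau_1$, namely $H$. Hence $H\in\CH(X,\tau_2)$. Interchanging the roles of $\tau_1$ and $\tau_2$ yields the reverse inclusion and thus equality.

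There is no real obstacle here; the lemma is meant as a simple but useful bookkeeping tool, and its main function in the paper will be to allow the passage between a topology $\tau$ and its Bohr modification $\tau^+$ (which are compatible, as noted just after the definition of the Bohr topology) when studying characterized subgroups. I would keep the written proof to two or three lines, essentially: ``Immediate from the definition of $s_\vs(X)$, since $\widehat{(X,\tau_1)}=\widehat{(X,\tau_2)}$ and $s_\vs(X)$ depends only on $\vs$ and on the topology of $\T$.''
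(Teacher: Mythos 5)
Your proposal is correct and is exactly the argument the paper has in mind: the paper states this lemma without proof, prefacing it with ``Clearly,'' precisely because $s_\vs(X)$ depends only on the sequence $\vs$ and the topology of $\T$, while compatibility guarantees the same pool of sequences $\vs\in\widehat X^\N$. Nothing is missing.
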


In particular, for a topological abelian group $(X,\tau)$, since $\tau$ and its Bohr modification $\tau^+$ are compatible,
$\CH(X,\tau)=\CH(X,\tau^+)$. 


\section{The $\Gamma$-radical}

\begin{defin}
Let $X$ be a topological abelian group. For a subset $\Gamma$ of $\widehat X$,  define the \emph{$\Gamma$-radical} of $X$ by
$$\n_\Gamma(X):=\bigcap_{\chi\in\Gamma}\ker\chi=\Gamma^\perp.$$
\end{defin}

Clearly, $\n_\Gamma(X)$ is a closed subgroup of $X$.

\medskip
The motivation for the choice of the term $\Gamma$-radical is the special case $\Gamma = \wh X$, when $$\n(X):=\n_{\widehat X}(X)$$ is usually called the \emph{von Neumann radical} of $X$. Then $\n(X)=\{0\}$ (respectively, $\n(X)=X$) precisely when $\widehat X$ separates the points of $X$ (respectively, $\wh X=\{0\}$); in other words:
\begin{itemize}
\item[(i)] $X$ is MAP if and only if $\n(X)=\{0\}$;
\item[(ii)] $X$ is MinAP if and only if $\n(X)=X$.
\end{itemize}

\begin{remark}\label{remarkn}
Let $X$ be a topological abelian group and $\Gamma$ a subset of $\widehat X$.
\begin{itemize}
\item[(i)] If $\Gamma=\emptyset$, then $\n_\Gamma(X)=X$. 
\item[(ii)] If $\Gamma$ is countable, then $\n_{\Gamma}(X)$ is a characterized subgroup of $X$ (indeed, $\n_{\Gamma}(X)=s_\vs(X)$ for $\vs\in\wh{X}^\N$ such that each character in $\Gamma$ appears infinitely many times in $\vs$).
\end{itemize}
\end{remark}

For a given sequence $\vs\in\wh{X}^\N$, the \emph{support} $\Gamma_\vs=\lgr v_n :n\in\N\rgr$  of $\vs$ is the set of all characters appearing in $\vs$. We abbreviate the notation of the ${\Gamma_\vs}$-radical by writing
$$\n_\vs(X)\mathrel{\mathop:}=\n_{\Gamma_\vs}(X),$$
and we call this subgroup the \emph{$\vs$-radical} of $X$. 

\begin{lemma}\label{Rnv}\label{LnvGdelta}
Let $X$ be a topological abelian group and $\vs\in\widehat{X}^\N$. Then:
\begin{itemize}
\item[(i)] $\n_\vs(X)\le\sv(X)$;
\item[(ii)] $\n_\vs(X)$ is dually closed;
\item[(iii)] $\n_\vs(X)$ is characterized;
\item[(iv)] $\n_\vs(X)$ is closed and $\{0\}$ is $\G$ in $X/\n_\vs(X)$ (so, $\n_\vs(X)$ is $\G$); 
\item[(v)] $[X:\n_\vs(X)]\le\cc$.
\end{itemize}
\end{lemma}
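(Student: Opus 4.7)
The plan is to dispatch each item in turn, since none of the five assertions should require heavy machinery beyond what is already in the preceding background section.

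For (i), I would simply unwind definitions: any $x\in\n_\vs(X)$ satisfies $v_n(x)=0$ for every $n\in\N$, and the constant zero sequence trivially converges to $0$ in $\T$, so $x\in\sv(X)$. For (ii), the crucial observation is that each $v_n$ vanishes on $\n_\vs(X)$ by construction, hence factors through the canonical projection to give a character $\bar v_n\in\wh{X/\n_\vs(X)}$. Since $\n_\vs(X)=\bigcap_n\ker v_n$, the family $\{\bar v_n:n\in\N\}$ separates the points of $X/\n_\vs(X)$, so this quotient is MAP; by the definition of dually closed recalled in the preliminaries, this is exactly what we need. For (iii), I would quote Remark \ref{remarkn}(ii) directly: $\Gamma_\vs$ is at most countable, so $\n_\vs(X)=\n_{\Gamma_\vs}(X)$ is characterized (one can even write it explicitly as $s_\ws(X)$ for the sequence $\ws$ in which each $v_n$ is repeated infinitely often).

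For (iv), continuity of each $v_n$ makes $\ker v_n$ closed in $X$, and then $\n_\vs(X)=\bigcap_n\ker v_n$ is a countable intersection of closed subgroups, hence closed and $\G$. Passing to the quotient, the induced characters $\bar v_n$ above give $\{0\}=\bigcap_n\ker\bar v_n$, a countable intersection of closed sets in $X/\n_\vs(X)$, which is the required $\G$-property at the identity.

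Finally, for (v), I would assemble the $\bar v_n$ into the diagonal homomorphism
\[
\Phi:X/\n_\vs(X)\longrightarrow\T^\N,\qquad x+\n_\vs(X)\longmapsto(v_n(x))_{n\in\N},
\]
which is well defined (since each $v_n$ kills $\n_\vs(X)$) and injective (since the intersection of the $\ker\bar v_n$ is trivial). Consequently $[X:\n_\vs(X)]=|X/\n_\vs(X)|\le|\T^\N|=\cc$. None of the steps looks like a genuine obstacle; the only subtlety worth flagging is making sure that one does not implicitly use Hausdorffness of $X$ in (ii) or (iv), and the argument above indeed works for an arbitrary topological abelian group $X$ because everything is phrased in terms of kernels of continuous characters.
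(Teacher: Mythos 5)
Your argument follows the paper's proof almost verbatim: (i)--(iii) are handled exactly as in the paper (the paper declares (i) and (ii) ``clear from the definitions'' and your MAP argument for (ii) is precisely the intended one), and for (iv)--(v) your diagonal map $\Phi$ is the paper's $\varphi:X\to\T^\N$, $x\mapsto(v_n(x))_n$. The one point to repair is the inference in (iv): a countable intersection of \emph{closed} subgroups is closed, but it is \emph{not} automatically $\G$ (in a non-metrizable group even a single closed subgroup, e.g.\ a point, can fail to be $\G$), so ``countable intersection of closed subgroups, hence $\G$'' and likewise ``$\{0\}=\bigcap_n\ker\bar v_n$ is a countable intersection of closed sets, which is the required $\G$-property'' are not valid as stated. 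The fix is immediate and is what the paper uses: $\{0\}$ is $\G$ in the metrizable group $\T$ (and in $\T^\N$), so each $\ker v_n=v_n^{-1}(\{0\})$ is itself $\G$ as the preimage of a $\G$-set under a continuous map, whence $\n_\vs(X)=\bigcap_n\ker v_n=\varphi^{-1}(\{0\})$ is $\G$ in $X$ and $\{0\}$ is $\G$ in $X/\n_\vs(X)$. With that correction everything goes through, and your closing remark that no Hausdorffness of $X$ is needed is accurate.
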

\begin{proof}
(i) and (ii) are clear from the definitions and (iii) follows from Remark \ref{remarkn}(ii).

(iv) Let $\varphi:X\to\T^\N$ be defined by $\varphi(x)=(v_0(x),\dots,v_n(x),\dots)$ for every $x\in X$. Since $\{0\}$ is $\G$ in $\T^\N$, we conclude that  that $\{0\}$ is $\G$ in $X/\n_\vs(X)$. Moreover, $\ker\varphi=\varphi^{-1}(0)=\n_\vs(X)$, so $\n_\vs(X)$ is $\G$ in $X$. 

(v) Since $X/\n_\vs(X)$ is algebraically isomorphic to $\varphi(X)\leq\T^\N$ and $|\T^\N|=\cc$, we conclude that $[X:\n_\vs(X)]\le\cc$.
\end{proof}

\begin{rem}\label{RchIndex}
Let $X$ be a topological abelian group and $\vs\in\wh{X}^\N$. Then $\n_\vs(X)$ is closed and $\G$ in every group topology on $X$ that makes $v_n$ continuous for every $n\in\N$. In particular, $\n_\vs(X)$ is closed and $\G$ in every group topology on $X$ compatible with the topology of $X$, so in the Bohr topology of $X$. 
\end{rem}

Lemma \ref{LnvGdelta} gives a bound for the index of the characterized subgroups:

\begin{cor}\label{<c}
Every characterized subgroup of a topological abelian group $X$ has index at most $\cc$.  \end{cor}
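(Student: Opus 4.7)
The plan is to deduce the bound $[X:H]\le\cc$ for a characterized subgroup $H$ directly from the preceding Lemma \ref{Rnv}, using the $\vs$-radical as an intermediate subgroup sandwiched between $\{0\}$ and $H$.

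First I would fix a sequence $\vs\in\wh{X}^{\N}$ with $H=s_\vs(X)$, which exists by definition of characterized subgroup. The key observation is then the chain of subgroup inclusions
\[
\n_\vs(X)\le s_\vs(X)=H\le X,
\]
where the first inclusion is exactly Lemma \ref{Rnv}(i). Taking indices, one has $[X:H]\le[X:\n_\vs(X)]$, since the index is monotone under enlarging the subgroup (each coset of $H$ in $X$ is a union of cosets of $\n_\vs(X)$).

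It then remains to invoke Lemma \ref{Rnv}(v), which gives $[X:\n_\vs(X)]\le\cc$ via the canonical embedding $X/\n_\vs(X)\hookrightarrow\T^\N$ induced by evaluation against the $v_n$'s. Combining the two inequalities yields $[X:H]\le\cc$, as desired.

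There is essentially no obstacle here: the entire content of the corollary has already been packaged into Lemma \ref{Rnv}(i) and (v), so the proof reduces to a one-line monotonicity argument on indices. The only subtlety worth flagging is that $H$ need not itself be closed or $G_\delta$ (unlike $\n_\vs(X)$), so one must really pass through the $\vs$-radical rather than try to handle $H$ directly.
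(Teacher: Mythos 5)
Your argument is correct and is essentially identical to the paper's own proof: both fix $\vs$ with $H=s_\vs(X)$, apply Lemma \ref{Rnv}(i) to get $\n_\vs(X)\le H$, and then conclude $[X:H]\le[X:\n_\vs(X)]\le\cc$ from Lemma \ref{Rnv}(v) by monotonicity of the index. Nothing further is needed.
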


\begin{proof}
Let $\vs\in\wh X^\N$. Since $\n_\vs(X)\le\sv(X)$ by Lemma \ref{Rnv}(i), hence $[X:\sv(X)]\le[X:\n_\vs(X)]\le\cc$ by Lemma \ref{Rnv}(v).
\end{proof}

The set $\Gamma_\vs$ can be partitioned as $$\Gamma_\vs=\Gi\dot\cup\Gz,$$ where
\begin{itemize}
  \item[(i)] $\Gi:=\lgr v_n\in \Gamma_\vs: v_n=v_m\ \text{for infinitely many}\ m\in\N\rgr$;
  \item[(ii)] $\Gz:=\Gamma_\vs\setminus \Gi$.		
\end{itemize}
In other words, $\Gi$ is the set of all characters appearing infinitely many times in $\vs$, while each character in its complement $\Gz$ appears finitely many times in $\vs$. Clearly, $\vs$ is a finitely many-to-one sequence if and only if $\Gamma_\vs^\infty=\emptyset$. 

\medskip
In case $\Gamma_{\vs^\infty}\ne \emptyset$, let $\vs^\infty$ be the largest subsequence of $\vs$ with $\Gamma_{\vs^\infty}=\Gamma_{\vs}^\infty$. Then clearly $s_{\vs^\infty}(X)=\n_{\vs^\infty}(X)$.

In case $\Gamma_{\vs}^0$ is finite, the subsequence $\vs^\infty$ of $\vs$ is cofinite, so $s_\vs(X) = s_{\vs^\infty}(X)$. In other words, one can safely replace 
$\vs$ by $\vs^\infty$. 
This is why \emph{from now on we shall always assume that}
\begin{equation}\label{dag}
\text{\emph{either}}\ \Gamma_{\vs^0}=\emptyset\ \text{\emph{or}}\ \Gamma_{\vs^0}\ \text{\emph{is infinite}}.
\end{equation}
If $\Gamma_{\vs}^0$ is infinite, we denote by $\vs^0$ the subsequence of $\vs$ such that $\Gamma_{\vs^0}=\Gz$. 
If $\Gamma_{\vs}^\infty\ne \emptyset$, we have the partition $$\vs=\vs^\infty\dot\cup\vs^0$$ of $\vs$ in the two subsequences $\vs^\infty$ and $\vs^0$.
Moreover, always $\Gamma_{\vs^0}^\infty=\emptyset$ and so, if $\Gamma_{\vs^0}$ is infinite (equivalently, $\Gamma_{\vs^0}\neq\emptyset$ by \eqref{dag}), 
$\vs^0$ is a finitely many-to-one sequence and $s_{\vs^0}(X)=s_{\mathbf w}(X)$, where $\ws$ is a one-to-one subsequence of $\vs^0$ such that $\Gamma_\ws=\Gamma_{\vs^0}$.

\medskip
We see now how we can obtain the subgroup of $X$ characterized by $\vs$ by considering separately the $\vs^\infty$-radical of $X$ and the subgroup of $X$ characterized by $\vs^0$.

\begin{lemma}\label{RKchar}
Let $X$ be a topological abelian group and $\vs\in\wh{X}^\N$ satisfying \eqref{dag}. 
\begin{itemize}
\item[(i)] If $\Gz = \emptyset$, then $\vs^\infty = \vs$, so $\Gamma^\infty \ne \emptyset$ and $\sv(X)=\n_{\vs^\infty}(X)$.
\item[(ii)]If $\Gz$ is infinite and $\Gi \ne \emptyset$, then $s_\vs(X)=s_{\vs^0}(X)\cap\n_{\vs^\infty}(X)$.
\end{itemize}
\end{lemma}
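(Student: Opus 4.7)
The plan is to prove both parts by directly unwinding the definitions, leveraging the observation already recorded just before the statement: whenever $\Gamma_{\vs^\infty}\ne\emptyset$, one has $s_{\vs^\infty}(X)=\n_{\vs^\infty}(X)$. The proof is essentially bookkeeping, once one notes the following elementary partition principle for convergent sequences in $\T$: if $\N = A \dot\cup B$ and $\vs', \vs''$ denote the subsequences of $\vs$ indexed by $A$ and $B$, then $v_n(x)\to 0$ in $\T$ if and only if $v_n(x)\to 0$ along both $\vs'$ and $\vs''$, because for any $\epsilon>0$ the $\epsilon$-tail of $\vs$ is $\N$-cofinite precisely when both the $A$- and $B$-tails are cofinite in $A$ and $B$ respectively (one combines the two cut-offs by taking their maximum).

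For part (i), assuming $\Gz=\emptyset$, first I would observe that $\Gamma_\vs = \Gi \dot\cup \Gz = \Gi$, and that $\Gamma_\vs\ne\emptyset$ since $\vs$ is an $\N$-indexed sequence (so $v_0 \in \Gamma_\vs$); hence $\Gi\ne\emptyset$. By the maximality defining $\vs^\infty$, and since every term of $\vs$ lies in $\Gi = \Gamma_{\vs^\infty}$, the subsequence $\vs^\infty$ coincides with $\vs$ itself. The desired equality $s_\vs(X)=\n_{\vs^\infty}(X)$ then reduces to $s_{\vs^\infty}(X) = \n_{\vs^\infty}(X)$, which is precisely the identity already noted in the preceding paragraph (each $\chi\in\Gi$ occurs infinitely often in $\vs^\infty$, so $v_n(x)\to 0$ forces the constant value $\chi(x)=0$, i.e.\ $x\in\ker\chi$ for every $\chi\in\Gi$).

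For part (ii), assuming $\Gz$ is infinite and $\Gi\ne\emptyset$, I would apply the partition principle to the decomposition $\vs = \vs^\infty \dot\cup \vs^0$ recorded in the paper. This yields
\[
s_\vs(X) = s_{\vs^\infty}(X) \cap s_{\vs^0}(X).
\]
Since $\Gi = \Gamma_{\vs^\infty}\ne\emptyset$, the same observation invoked in (i) gives $s_{\vs^\infty}(X)=\n_{\vs^\infty}(X)$, and substituting this into the displayed identity completes the proof.

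The main obstacle, such as it is, is merely one of care: one must verify the partition identity $s_\vs(X) = s_{\vs^\infty}(X)\cap s_{\vs^0}(X)$ with the correct interpretation of convergence of the subsequences (the indices in the subsequence go to infinity in $\N$, so convergence along the subsequence agrees with the notion inherited from the parent sequence), and one must be sure that the already-recorded identity $s_{\vs^\infty}(X)=\n_{\vs^\infty}(X)$ is available — which it is, precisely because the hypothesis $\Gi\ne\emptyset$ in both cases guarantees that $\vs^\infty$ is well-defined with support $\Gi$.
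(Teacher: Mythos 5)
Your proposal is correct and follows essentially the same route as the paper: part (i) reduces to the observation $s_{\vs^\infty}(X)=\n_{\vs^\infty}(X)$ after noting $\vs^\infty=\vs$, and part (ii) is exactly the paper's two-inclusion argument (subsequences of null sequences are null, and a sequence partitioned into two null subsequences is null), which you have merely packaged as an explicit ``partition principle.'' No gaps.
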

\begin{proof}
(i) Since $\Gz= \emptyset$, we have $s_\vs(X)=s_{\vs^\infty}(X)$ and, as observed above, $s_{\vs^\infty}(X)=\n_{\vs^\infty}(X)$. 

(ii) Since $\vs^\infty$ and $\vs^0$ are subsequences of $\vs$, it follows that $s_\vs(X)\\leq s_{\vs^0}(X)\cap\n_{\vs^\infty}(X)$. Let now $x\in s_{\vs^0}(X)\cap\n_{\vs^\infty}(X)$. Since both $\vs^\infty(x)\to0$ and $\vs^0(x)\to 0$, and since $\vs=\vs^\infty\dot\cup\vs^0$, we conclude that $\vs(x)\to 0$, that is, $x\in s_\vs(X)$. This concludes the proof.
\end{proof}

For $\vs=(v_n)\in\wh{X}^\N$ and $m\in\N$, let 
\begin{equation}\label{vmeq}
\vs_{(m)}:=(v_n)_{n\ge m}.
\end{equation}
Note that $\n_{\vs_{(m)}}(X)\leq \n_{\vs_{(m+1)}}(X)$ for every $m\in\N$.

\section{A hierarchy for characterized subgroups}\label{hsec}

The following definition introduces three specific types of characterized subgroups.

\begin{defin}\label{T-N-K}
Let $X$ be a topological abelian group. A subgroup $H$ of $X$ is:
\begin{itemize}
\item[(i)] \emph{$T$-characterized} if $H=\sv(X)$ where $\vs\in\wh{X}^\N$ is a non-trivial $T$-sequence;
\item[(ii)] \emph{$K$-characterized} if $H=\sv(X)$ for some finitely many-to-one sequence $\vs\in\wh{X}^\N$ (i.e., $\Gi=\emptyset$);
\item[(iii)] \emph{$N$-characterized} if $H=\n_\vs(X)$ for some $\vs\in\wh{X}^\N$.
\end{itemize}
\end{defin}

In analogy to Definition \ref{T-N-K}(i), we introduce the following smaller class of characterized subgroups (see also Problem \ref{TBPB}).

\begin{defin}\label{TB}
A subgroup $H$ of a topological abelian group $X$ is \emph{$TB$-characterized} if $H=\sv(X)$, where $\vs\in\wh{X}^\N$ is a non-trivial $TB$-sequence.
\end{defin}

Every $TB$-characterized subgroup is obviously $T$-characterized. Moreover, every $T$-characterized subgroup is also $K$-characterized, since every $T$-sequence contains no constant subsequences. The $N$-characterized subgroups are clearly closed, and they are always characterized as noted above.

On the other hand, proper dense characterized subgroups are $TB$-characterized by Lemma \ref{vv}(ii), so also $T$-characterized, and in particular $K$-characterized, but they are not $N$-characterized. We shall see below that closed (even open) subgroups need not be $K$-characterized in general. 

\smallskip
Denote by $\CH_K(X)$ (respectively, $\CH_N(X)$, $\CH_T(X)$, $\CH_{TB}(X)$) the family of all $K$-characterized (respectively, $N$-characterized, $T$-characterized, $TB$-characterized) subgroups of the topological abelian group $X$. Then we have the following strict inclusions
$$\CH_{TB}(X)\subsetneq \CH_T(X)\subsetneq \CH_K(X)\subsetneq \CH(X)\supsetneq \CH_N(X).$$ 

\bigskip
We start giving some basic properties that can be proved immediately.

\begin{cor}\label{NEW:lemmaK}
Let $X$ be a topological abelian group and $X_0$, $X_1$, $X_2$ subgroups of $X$ with $X_0\leq X_1\leq X_2$ and such that $X_1$ is dually embedded in $X_2$. 
\begin{itemize}
\item[(i)] If $X_0\in \CH_K(X_1)$ and $X_1\in \CH_K(X_2)$, then $X_0\in \CH_K(X_2)$. 
\item[(ii)] If $X_0\in \CH_N(X_1)$ and $X_1\in \CH_N(X_2)$, then $X_0\in \CH_N(X_2)$. 
\end{itemize}
\end{cor}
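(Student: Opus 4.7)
The plan is to adapt the interleaving construction from Proposition \ref{NEW:lemma} and verify that the extra structural requirements ($K$- or $N$-characterization) are preserved. For both parts, start from sequences $\vs \in \wh{X_1}^\N$ witnessing $X_0$ as a $K$- or $N$-characterized subgroup of $X_1$, and $\ws \in \wh{X_2}^\N$ witnessing $X_1$ similarly inside $X_2$. Use that $X_1$ is dually embedded in $X_2$ to extend each $v_n$ to some $v_n^* \in \wh{X_2}$, obtaining $\vs^* \in \wh{X_2}^\N$. Then define $\ws^* \in \wh{X_2}^\N$ by interleaving: $w_{2n}^* := v_n^*$ and $w_{2n+1}^* := w_n$, exactly as in the proof of Proposition \ref{NEW:lemma}.

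For part (i), the key observation is that extension preserves the finitely many-to-one property: if $v_n \ne v_m$ in $\wh{X_1}$, then $v_n^* \ne v_m^*$ in $\wh{X_2}$ since they already differ on $X_1$, so $\vs^*$ is finitely many-to-one whenever $\vs$ is. Since $\ws$ is also finitely many-to-one, any character of $X_2$ appears only finitely often among the $v_n^*$'s and only finitely often among the $w_n$'s, hence only finitely often in $\ws^*$. The computation of $s_{\ws^*}(X_2)$ from Lemma \ref{Lbasic}(i) and Proposition \ref{NEW:lemma} then yields
\[
s_{\ws^*}(X_2) = s_{\vs^*}(X_2) \cap s_\ws(X_2) = s_{\vs^*}(X_2) \cap X_1 = s_\vs(X_1) = X_0,
\]
so $X_0 \in \CH_K(X_2)$.

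For part (ii), $\n_\vs(X_1) = X_0$ and $\n_\ws(X_2) = X_1$. The $\Gamma$-radical is compatible with unions of supports, so $\n_{\ws^*}(X_2) = \n_{\vs^*}(X_2) \cap \n_\ws(X_2) = \n_{\vs^*}(X_2) \cap X_1$. Since $v_n^*$ restricts to $v_n$ on $X_1$, one has $\n_{\vs^*}(X_2) \cap X_1 = \n_\vs(X_1) = X_0$. Hence $X_0 = \n_{\ws^*}(X_2) \in \CH_N(X_2)$.

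The only mild obstacle is the bookkeeping for part (i): checking that passing to the extension $\vs^*$ does not collapse distinct characters (trivial from the embedding $X_1 \hookrightarrow X_2$) and that interleaving two finitely many-to-one sequences remains finitely many-to-one (immediate, since finite $+$ finite is finite). Part (ii) requires no such care because the $\Gamma$-radical is insensitive to multiplicities in the indexing sequence, which makes the $N$-characterized case essentially automatic from the proof of Proposition \ref{NEW:lemma}.
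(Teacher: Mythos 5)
Your proof is correct and follows essentially the same route as the paper: interleave the (extended) witnessing sequences as in Proposition \ref{NEW:lemma} and check that the relevant property is preserved. Your verification for (ii) via the identity $\n_{\ws^*}(X_2)=\n_{\vs^*}(X_2)\cap\n_{\ws}(X_2)$ is in fact slightly more robust than the paper's remark, since it does not depend on choosing the same extension for repeated characters of $\vs$.
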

\begin{proof}
(i) It suffices to note that if in the proof of Proposition \ref{NEW:lemma}, $\vs$ is one-to-one and $\ws$ is one-to-one, then $\ws^*$ is finitely many-to-one.

(ii) It suffices to note that if in the proof of Proposition \ref{NEW:lemma}, $\Gamma_\vs=\Gamma_\vs^\infty$  and $\Gamma_\ws=\Gamma_\ws^\infty$, then also $\Gamma_{\ws^*}=\Gamma_{\ws^*}^\infty$. 
\end{proof}

By Lemma \ref{RKchar}, we have directly the following

\begin{cor}\label{NcapK}
Every characterized subgroup of a topological abelian group $X$ is the intersection of an $N$-characterized subgroup of $X$ and a $K$-characterized subgroup of $X$.
\end{cor}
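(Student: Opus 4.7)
My plan is to deduce the corollary directly from Lemma \ref{RKchar}. Given $H = s_\vs(X) \in \CH(X)$, I would fix $\vs \in \wh X^\N$ satisfying the standing convention \eqref{dag}, so that $\Gz$ is either empty or infinite, and then split into cases depending on whether $\Gz$ and $\Gi$ are empty.

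The principal case is when $\Gz$ is infinite and $\Gi\ne\emptyset$. Here Lemma \ref{RKchar}(ii) furnishes the decomposition $H = s_{\vs^0}(X)\cap \n_{\vs^\infty}(X)$, which is already of the required shape: the first factor $s_{\vs^0}(X)$ is $K$-characterized, since $\vs^0$ is finitely many-to-one by construction (every character in $\Gz=\Gamma_{\vs^0}$ appears only finitely often in $\vs$, hence in its subsequence $\vs^0$, so $\Gamma_{\vs^0}^\infty=\emptyset$), while $\n_{\vs^\infty}(X)$ is $N$-characterized by the very definition of an $N$-characterized subgroup. In this case nothing more is needed.

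The two remaining situations are degenerate and would be handled trivially. If $\Gi=\emptyset$, then $\vs=\vs^0$ is itself finitely many-to-one, so $H$ is already $K$-characterized, and I would simply write $H = X\cap H$, using the fact that $X$ is $N$-characterized via the constant-zero sequence $\ws\equiv 0$ (for which $\n_\ws(X)=\ker 0 = X$). Symmetrically, if $\Gz=\emptyset$, Lemma \ref{RKchar}(i) gives $H=\n_\vs(X)$, which is $N$-characterized; to produce a $K$-characterized supergroup $B\supseteq H$ with $H\cap B=H$, I would (when $\Gi$ is infinite) enumerate $\Gi=\{\chi_n\}$ as a one-to-one sequence $\vs'$ and take $B = s_{\vs'}(X)$, which is $K$-characterized and satisfies $B\supseteq \bigcap_n\ker\chi_n = H$ (and when $\Gi$ is finite one takes $B=X$ as a vacuous $K$-characterized factor). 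I do not anticipate any real obstacle: the substance of the argument is packaged in Lemma \ref{RKchar}(ii), and the degenerate cases are routine bookkeeping.
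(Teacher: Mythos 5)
Your main case is exactly the paper's argument: the paper proves this corollary by a one-line appeal to Lemma \ref{RKchar}, whose part (ii) gives the decomposition $s_\vs(X)=s_{\vs^0}(X)\cap\n_{\vs^\infty}(X)$ with $s_{\vs^0}(X)\in\CH_K(X)$ and $\n_{\vs^\infty}(X)\in\CH_N(X)$. Your treatment of the case $\Gi=\emptyset$ is also fine ($X=\n_\ws(X)$ for $\ws\equiv 0$ is indeed $N$-characterized), and your handling of $\Gz=\emptyset$ with $\Gi$ infinite (taking $B=s_{\vs'}(X)$ for a one-to-one enumeration $\vs'$ of $\Gi$) is correct and is more explicit than anything in the paper.

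The flaw is in the last sub-case ($\Gz=\emptyset$, $\Gi$ finite and non-empty), where you take $B=X$ ``as a vacuous $K$-characterized factor''. The group $X$ is \emph{not} in general a $K$-characterized subgroup of itself: by Definition \ref{T-N-K}(ii) this would require $X=s_\us(X)$ for a finitely many-to-one $\us$, and Lemma \ref{LCTW} shows that for an infinite compact $X$ every such $s_\us(X)$ has Haar measure $0$; in particular $X\notin\CH_K(X)$ for $X$ compact. Worse, in this sub-case no choice of $B$ can work in general: take $X=\Z(2)^\N$ and $\vs$ constantly equal to the first coordinate projection $\pi_0$, so that $H=s_\vs(X)=\ker\pi_0$ is open of index $2$. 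Any $K$-characterized $B$ has measure $0$ by Lemma \ref{LCTW}, so $A\cap B\subseteq B$ cannot equal the positive-measure subgroup $H$. Thus in this degenerate case the corollary can only be read as asserting that $H=\n_\vs(X)$ is itself $N$-characterized, with the $K$-factor absent (or, equivalently, one must allow a one-term ``intersection''). The paper's own proof silently restricts attention to the situation of Lemma \ref{RKchar} and does not address this either, so you are not worse off than the source; but the specific claim that $X\in\CH_K(X)$ always holds is false and should be removed or replaced by the degenerate reading just described.
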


The following stability property is clear for $N$-characterized subgroups, while it is not known for characterized subgroups.

\begin{lemma}
Countable intersections of $N$-characterized subgroups are $N$-characterized.
\end{lemma}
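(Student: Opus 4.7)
The plan is to reduce the claim to the elementary set-theoretic identity $\bigcap_i A_i^\perp = \left(\bigcup_i A_i\right)^\perp$, combined with the fact that a countable union of countable sets is countable.

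First, I would unwind the definition: if $H$ is $N$-characterized, then $H = \n_\vs(X) = \bigcap_{n\in\N}\ker v_n = \Gamma_\vs^\perp$, where $\Gamma_\vs = \{v_n : n\in\N\}$ is a countable subset of $\wh X$. So for a countable family $(H_k)_{k\in\N}$ of $N$-characterized subgroups, pick sequences $\vs^{(k)} \in \wh X^\N$ with $H_k = \n_{\vs^{(k)}}(X) = \Gamma_{\vs^{(k)}}^\perp$ for every $k\in\N$.

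Next, I would use the standard annihilator identity
\[
\bigcap_{k\in\N} H_k \;=\; \bigcap_{k\in\N} \Gamma_{\vs^{(k)}}^\perp \;=\; \Bigl(\bigcup_{k\in\N} \Gamma_{\vs^{(k)}}\Bigr)^{\!\perp},
\]
which is immediate from the definition of the annihilator: $x$ annihilates every element of each $\Gamma_{\vs^{(k)}}$ if and only if $x$ annihilates every element of their union.

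Finally, set $\Gamma := \bigcup_{k\in\N} \Gamma_{\vs^{(k)}}$. Being a countable union of countable sets, $\Gamma$ is itself countable; if $\Gamma = \emptyset$, the intersection equals $X = \n_{\mathbf 0}(X)$, and otherwise one enumerates $\Gamma$ (with repetitions if finite) as a sequence $\ws = (w_n) \in \wh X^\N$ with $\Gamma_\ws = \Gamma$. Then
\[
\bigcap_{k\in\N} H_k \;=\; \Gamma^\perp \;=\; \Gamma_\ws^\perp \;=\; \n_\ws(X),
\]
so the intersection is $N$-characterized, as required. There is no real obstacle here: the only point worth noting is that the definition of $N$-characterized requires the indexing set of characters to be presentable as a sequence, which is guaranteed precisely by the countability of the family being intersected, so the result would in fact fail for uncountable families (showing that countability of the intersection is the essential hypothesis).
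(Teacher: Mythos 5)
Your argument is correct and is precisely the one the paper has in mind: the paper states this lemma without proof (calling the stability property ``clear''), and the intended justification is exactly your reduction to $\bigcap_k \Gamma_{\vs^{(k)}}^\perp = \bigl(\bigcup_k \Gamma_{\vs^{(k)}}\bigr)^\perp$ together with re-enumerating the countable union as a single sequence. Nothing to add.
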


The next correspondence theorem was proved in \cite{DG13} for characterized subgroups of compact abelian groups. 

\begin{prop}\label{PcntrimgChar}
Let $X$ be a topological abelian group, $F$ a closed subgroup of $X$ and let $\pi:X\to X/F$ be the canonical projection. If $H$ is a characterized (respectively, $K$-characterized, $N$-characterized, $T$-characterized) subgroup of $X/F$, then $\pi^{-1}(H)$ is a characterized (respectively, $K$-characterized, $N$-characterized, $T$-characterized) subgroup of $X$.
\end{prop}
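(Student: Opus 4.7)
The plan is to lift the characterizing sequence on the quotient to one on $X$ via the canonical embedding $\wh{X/F}\hookrightarrow\wh X$, $v\mapsto v\circ\pi$, whose image is $F^\perp$ by Lemma \ref{perp}(i). So given $\vs=(v_n)\in\wh{X/F}^\N$ witnessing that $H$ is characterized (resp.\ $K$-, $N$-, $T$-characterized), I would set
$$
u_n\mathrel{\mathop:}=v_n\circ\pi\in\wh X,\qquad \us\mathrel{\mathop:}=(u_n)\in\wh X^\N,
$$
and then verify that $\us$ witnesses the corresponding property for $\pi^{-1}(H)$ in $X$.

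The general (and $N$-) case is routine. For $x\in X$ we have $u_n(x)=v_n(\pi(x))$, so $u_n(x)\to 0$ in $\T$ if and only if $\pi(x)\in s_\vs(X/F)=H$; hence $s_\us(X)=\pi^{-1}(H)$. For the $N$-case, $H=\n_\vs(X/F)=\bigcap_n\ker v_n$, and the analogous computation gives $\pi^{-1}(H)=\bigcap_n\ker u_n=\n_\us(X)$. For the $K$-case, the map $v\mapsto v\circ\pi$ from $\wh{X/F}$ to $\wh X$ is injective (again Lemma \ref{perp}(i)), so if $\vs$ is finitely many-to-one, so is $\us$; combined with the identity $s_\us(X)=\pi^{-1}(H)$, this yields that $\pi^{-1}(H)$ is $K$-characterized.

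The one point that requires a small construction is the $T$-characterized case, which I expect to be the main obstacle. Suppose $\vs$ is a non-trivial $T$-sequence in $\wh{X/F}$, i.e.\ there is a Hausdorff group topology $\eta$ on $\wh{X/F}$ with $v_n\stackrel{\eta}{\to}0$. Transport $\eta$ via $v\mapsto v\circ\pi$ to a Hausdorff group topology $\eta'$ on $F^\perp\leq\wh X$; then extend $\eta'$ to a group topology $\tau$ on $\wh X$ by declaring $F^\perp$ to be an open subgroup carrying $\eta'$ (that is, take as a neighborhood base of $0$ in $\wh X$ the $\eta'$-neighborhoods of $0$ in $F^\perp$). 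Since $F^\perp$ is open, hence closed, in $(\wh X,\tau)$, and since $\eta'$ is Hausdorff on $F^\perp$, the topology $\tau$ is Hausdorff. By construction $u_n\in F^\perp$ for all $n$ and $u_n\stackrel{\tau}{\to}0$, so $\us$ is a $T$-sequence in $\wh X$. Finally, non-triviality of $\us$ follows from non-triviality of $\vs$ together with the injectivity of $v\mapsto v\circ\pi$, and we already verified $s_\us(X)=\pi^{-1}(H)$.
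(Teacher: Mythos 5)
Your proposal is correct and follows essentially the same route as the paper: lift the sequence through the embedding $\chi\mapsto\chi\circ\pi$ of $\wh{X/F}$ onto $F^\perp\leq\wh X$ (Lemma \ref{perp}(i)), check the pointwise identity $s_{\us}(X)=\pi^{-1}(H)$ for the general, $K$- and $N$-cases, and for the $T$-case transport the Hausdorff topology to $F^\perp$ and extend it to $\wh X$ by declaring $F^\perp$ an open subgroup. Your additional remark on the non-triviality of the lifted sequence is a harmless (and correct) supplement to what the paper writes.
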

\begin{proof}
Let $\us=(u_n)\in\wh{X/F}^\N$ and consider $\wh{\pi}:\wh{X/F}\to\widehat X$. For every $n\in\N$ let $v_n=\wh{\pi}(u_n)=u_n\circ\pi\in F^\perp\leq\wh X$ and $\vs=(v_n)$. 

(i) Assume that $H=s_\us(X/F)$. Then $\pi^{-1}(H)=\sv(X)$, as $x\in\sv(X)$ if and only if $v_n(x)=u_n(\pi(x))\to 0$, and this occurs precisely when $\pi(x)\in H$.
 
(ii) Assume now that $H$ is $K$-characterized, that is, assume that $H=s_\us(X/F)$ and that $\Gamma_\us^\infty=\emptyset$. By (i), $\pi^{-1}(H)=\sv(X)$, and moreover, $\Gamma_{\vs}^\infty=\emptyset$.

(iii) If $H$ is $N$-characterized, then assume that $H=\n_{\us}(X/F)$. By (i), $\pi^{-1}(H)=\sv(X)$, and moreover $\pi^{-1}(H)=\n_\vs(X)$, since $v_n(x)=u_n(\pi(x))=0$ for every $n\in\N$ precisely when $\pi(x)\in H=\n_{\us}(X/F)$.

(iv) If $H$ is $T$-characterized, that is, if $H=s_\us(X/F)$ and $\us$ is a $T$-sequence, it remains to verify that $\vs$ is a $T$-sequence as well, since $\pi^{-1}(H)=s_\vs(X)$ by (i). Let $\tau$ be a Hausdorff group topology on $\wh{X/F}$, such that $u_n\to 0$ in $(\wh{X/F},\tau)$. By Lemma \ref{perp}(i), one can identify $\wh{X/F}$ with the subgroup $F^\perp$ of $\wh X$ by the algebraic isomorphism $\psi:\wh{X/F}\to \wh X$ defined by $\chi\mapsto \chi\circ\pi$. Let $\tau^*$ be the group topology on $\wh X$ having as a local base at $0$ the open neighborhoods of $0$ in $(\wh{X/F},\tau)$. Then $\tau^*$ is a Hausdorff group topology on $\wh{X}$ and $v_n\to 0$ in $(\wh X,\tau^*)$, as $v_n=\psi(u_n)\in F^\perp$ for every $n\in\N$  by definition.
\end{proof}

\begin{lemma}\label{aa}
Let $X$ be a topological abelian group and $H$ a subgroup of $X$ such that $\n(X)\leq H$. Then $H$ is characterized (respectively, $K$-characterized, $N$-characterized, $T$-characterized) if and only if $H/\n(X)$ is characterized (respectively, $K$-characterized, $N$-characterized, $T$-characterized).
\end{lemma}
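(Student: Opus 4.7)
The main tool is the canonical identification of $\wh X$ with $\wh{X/\n(X)}$. Since by definition $\n(X) = \bigcap_{\chi \in \wh X}\ker \chi$, every character $\chi\in\wh X$ vanishes on $\n(X)$ and hence factors uniquely through the canonical projection $\pi: X \to X/\n(X)$ as $\chi = \bar\chi \circ \pi$. The map $\psi: \wh{X/\n(X)} \to \wh X$ defined by $\bar\chi \mapsto \bar\chi \circ \pi$ is thus a bijective group homomorphism (a purely algebraic isomorphism). For any sequence $\vs = (v_n) \in \wh X^\N$ I will write $\bar\vs = (\bar v_n) \in \wh{X/\n(X)}^\N$ for its transported copy, and conversely every sequence in $\wh{X/\n(X)}^\N$ arises this way.

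The first step is a direct computation showing, for each $\vs \in \wh X^\N$,
\begin{equation*}
s_\vs(X) = \pi^{-1}\bigl(s_{\bar\vs}(X/\n(X))\bigr) \qquad \text{and} \qquad \n_\vs(X) = \pi^{-1}\bigl(\n_{\bar\vs}(X/\n(X))\bigr),
\end{equation*}
which is immediate from $v_n(x) = \bar v_n(\pi(x))$. Combined with the hypothesis $\n(X) \leq H$, which gives $H = \pi^{-1}(H/\n(X))$, this yields at once the equivalences $H = s_\vs(X) \iff H/\n(X) = s_{\bar\vs}(X/\n(X))$ and $H = \n_\vs(X) \iff H/\n(X) = \n_{\bar\vs}(X/\n(X))$. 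This already settles the unrestricted (``characterized'') and the $N$-characterized cases; alternatively, the ``if'' direction of the plain characterized case is a special case of Proposition \ref{PcntrimgChar} applied to $F = \n(X)$.

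For the $K$-characterized case, I only need to observe that the algebraic isomorphism $\psi$ preserves coincidences: $v_n = v_m$ if and only if $\bar v_n = \bar v_m$. Hence $\vs$ is finitely many-to-one (i.e., $\Gamma_\vs^\infty = \emptyset$) if and only if $\bar\vs$ is, and the equivalence follows from the previous step. For the $T$-characterized case, the key point is that the property of being a $T$-sequence depends only on the underlying abstract group and an abstract sequence in it (existence of a Hausdorff group topology making the sequence null). Since $\psi$ is an algebraic isomorphism between $\wh{X/\n(X)}$ and $\wh X$, any Hausdorff group topology on one transfers to a Hausdorff group topology on the other in which the transported sequence converges to $0$; hence $\vs$ is a non-trivial $T$-sequence in $\wh X$ if and only if $\bar\vs$ is a non-trivial $T$-sequence in $\wh{X/\n(X)}$ (non-triviality, being the negation of eventual nullity, is clearly preserved by $\psi$). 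Combined with the computation of the first step, this finishes the proof.

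\textbf{Expected obstacle.} There is no real difficulty here; the only point requiring a moment of care is the transport of the $T$-sequence property, where one must notice that being a $T$-sequence is a purely algebraic-plus-topological statement on the abstract dual group, so the algebraic isomorphism $\psi: \wh{X/\n(X)} \to \wh X$ suffices to move it back and forth (no continuity of $\psi$ with respect to any ambient dual topology is needed).
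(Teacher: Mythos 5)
Your proof is correct and follows essentially the same route as the paper: factor each character through $\pi:X\to X/\n(X)$, use the resulting algebraic isomorphism $\wh{X/\n(X)}\cong\wh X$ to transport sequences back and forth, and observe that this transport preserves the finitely many-to-one property and the $T$-sequence property. Your explicit preimage formulas $s_\vs(X)=\pi^{-1}(s_{\bar\vs}(X/\n(X)))$ and $\n_\vs(X)=\pi^{-1}(\n_{\bar\vs}(X/\n(X)))$ are just a slightly more systematic way of writing what the paper's proof does.
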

\begin{proof}
Let $H=s_\vs(X)$ for some $\vs\in\wh X^\N$ and denote by $\pi:X\to X/\n(X)$ the canonical projection. For every $n\in\N$, since $\n(X)\leq \ker v_n$, the character $v_n$ factorizes as $v_n=u_n\circ \pi$, where $u_n\in\wh{X/\n(X)}$. Then $H/\n(X)=s_\us(X/\n (X))$. Viceversa, if $H/\n(X)=s_\us(X/\n(X))$ for some $\us\in\wh{X/\n(X)}^\N$, let $v_n=u_n\circ \pi$ for every $n\in\N$. Hence, $H=s_\vs(X)$. Moreover, $\vs$ is a finitely many-to-one sequence if and only if $\us$ is a finitely many-to-one sequence, so $H$ is $K$-characterized if and only if $H/\n(X)$ is $K$-characterized. Similarly, $\Gamma_\us^0$ is finite, precisely when $\Gamma_\vs^0$ is finite, hence $H$ is $N$-characterized if and only if $H/\n(X)$ is $N$-characterized.

It remains to check that $\vs$ is a $T$-sequence precisely when $\us$ is a $T$-sequence. This follows from the fact that the natural homomorphism $\wh{X/\n(X)} \to \wh{X}$ sending (the members of) $\us$ to (the members of) $\vs$ is an isomorphism, so certainly preserves the property of being a $T$-sequence. 
\end{proof}

The following lemma gives equivalent conditions for a subgroup to be characterized.

\begin{lemma}\label{TnewTeo}
Let $X$ be a topological abelian group and $H$ a subgroup of $X$. The following conditions are equivalent:
\begin{itemize}
\item[(i)] $H\in\CH(X)$;
\item[(ii)] there exists a closed subgroup $F$ of $X$ such that $F\le H$ and $H/F\in\CH(X/F)$;
\item[(iii)] there exists $\vs\in\wh{X}^\N$ such that for every closed $F\le\n_\vs(X)$ one has that $H/F=\su(X/F)$, where $\us=(u_n)$ and each $u_n$ is the factorization of $v_n$  through the canonical projection $\pi:X\to X/F$.
\end{itemize}
\end{lemma}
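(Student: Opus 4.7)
The plan is to close the cycle $(\mathrm{iii})\Rightarrow(\mathrm{ii})\Rightarrow(\mathrm{i})\Rightarrow(\mathrm{iii})$, which yields all three equivalences in one sweep.

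The two short legs are bookkeeping. For $(\mathrm{ii})\Rightarrow(\mathrm{i})$, given a closed subgroup $F\le H$ with $H/F\in\CH(X/F)$, I would apply Proposition~\ref{PcntrimgChar} to the canonical projection $\pi\colon X\to X/F$: its conclusion is that $\pi^{-1}(H/F)\in\CH(X)$, and since $F\le H$ one has $\pi^{-1}(H/F)=H$, so $H\in\CH(X)$. For $(\mathrm{iii})\Rightarrow(\mathrm{ii})$, I would simply take $F:=\n_\vs(X)$, where $\vs$ is the sequence supplied by $(\mathrm{iii})$. By Lemma~\ref{Rnv}(iv) this $F$ is closed; the validity of the equality $H/F=\su(X/F)$ presupposes $F\le H$; and the equality itself says $H/F\in\CH(X/F)$, witnessing $(\mathrm{ii})$.

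The main content is $(\mathrm{i})\Rightarrow(\mathrm{iii})$. Starting from $H=\sv(X)$ with $\vs=(v_n)\in\wh{X}^\N$, Lemma~\ref{Rnv}(i) gives $\n_\vs(X)\le H$, so every closed $F\le\n_\vs(X)$ automatically satisfies $F\le H$. The decisive observation is that $F\le\n_\vs(X)=\bigcap_n\ker v_n$, so each $v_n$ annihilates $F$ and therefore factors as $v_n=u_n\circ\pi$ through the canonical projection $\pi\colon X\to X/F$; continuity of the induced homomorphism $u_n\colon X/F\to\T$ is automatic from the universal property of the quotient topology, because $\pi^{-1}(u_n^{-1}(U))=v_n^{-1}(U)$ is open in $X$ for every open $U\subseteq\T$. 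With $\us=(u_n)\in\wh{X/F}^\N$, for any $x\in X$ one then has $x+F\in\su(X/F)$ iff $u_n(x+F)=v_n(x)\to 0$ in $\T$ iff $x\in\sv(X)=H$ iff $x+F\in H/F$; hence $H/F=\su(X/F)$, which is exactly $(\mathrm{iii})$.

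I do not anticipate a genuine obstacle: the whole argument is a routine unravelling of definitions, hinging on the single structural fact that the condition $F\le\n_\vs(X)$ is precisely what allows \emph{every} character in $\vs$ to descend to $X/F$ simultaneously, so that the induced sequence $\us$ transports the characterization statement from $X$ to $X/F$ without loss. The only mildly delicate point is notational: one must consistently remember that $F\le H$ is built into the formulation of $(\mathrm{iii})$ via the expression $H/F$, which is what makes $(\mathrm{iii})\Rightarrow(\mathrm{ii})$ trivial rather than vacuous.
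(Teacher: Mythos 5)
Your proposal is correct and follows essentially the same route as the paper: the same implication cycle $(\mathrm{iii})\Rightarrow(\mathrm{ii})\Rightarrow(\mathrm{i})\Rightarrow(\mathrm{iii})$, with $F=\n_\vs(X)$ as the witness for $(\mathrm{ii})$, Proposition~\ref{PcntrimgChar} for $(\mathrm{ii})\Rightarrow(\mathrm{i})$, and the factorization $v_n=u_n\circ\pi$ through the quotient for $(\mathrm{i})\Rightarrow(\mathrm{iii})$. The only (welcome) additions are that you make explicit the continuity of $u_n$ via the quotient topology and the fact that $F\le\n_\vs(X)\le H$ is what legitimizes writing $H/F$, both of which the paper leaves implicit.
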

\begin{proof}
(iii)$\Rightarrow$(ii) Take $F=\n_\vs(X)$.

(ii)$\Rightarrow$(i) Since $F\le H$ one has $H=\pi^{-1}(H/F)$ and by Proposition \ref{PcntrimgChar} one can conclude.

(i)$\Rightarrow$(iii) Let $H=\sv(X)$ for some $\vs\in\wh X^\N$ and $F\le\n_\vs(X)$. Let $\pi: X\to X/F$ be the canonical projection. For every $n\in\N$, let $u_n$ be the character of $X/F$ defined by $\pi(x)\mapsto v_n(x)$. Then $u_n$ is well defined since $F\le\n_\vs(X)\le\ker v_n$. Hence, $u_n\circ\pi=v_n$ for every $n\in\N$, and $H/F=s_\us(X/F)$. Indeed, for every $h\in H$, we have $u_n(\pi(h))=v_n(h)\to0$ and hence $H/F\leq s_\us(X/F)$. Conversely, if $\pi(x)\in s_\us(X/F)$, then $v_n(x)=u_n(\pi(x))\to0$. Hence, $x\in H$ and so $\pi(x)\in H/F$.
\end{proof}

\section{Autocharacterized groups}\label{autosec}

 The following consequence of \cite[Proposition 2.5]{DG13} motivates the introduction of the notion of autocharacterized group (see Definition \ref{PA}).

\begin{prop}\label{CCTWiii}
Let $X$ be a compact abelian group. Then $\sv(X) = X$ for some $\vs\in\wh X^\N$ if and only if the sequence $\vs$ is eventually null.
\end{prop}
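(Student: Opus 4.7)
The implication $(\Leftarrow)$ is immediate from \eqref{ev0} and uses no compactness hypothesis on $X$: an eventually null sequence of characters evaluates eventually to $0$ at every point. For the nontrivial direction $(\Rightarrow)$, my strategy is to pass to the Pontryagin dual and recognize the hypothesis as the assertion that $\vs$ converges to $0$ in the Bohr topology of the discrete group $\widehat X$; then I invoke the classical theorem that such a topology admits no nontrivial convergent sequences — precisely the equivalence already flagged in the introduction.

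Concretely, assume $\sv(X)=X$, so that $v_n(x)\to 0$ in $\T$ for every $x\in X$. By Lemma \ref{vv}(i) applied with $L=X$, this is exactly $v_n\to 0$ in $\sigma(\widehat X,X)$. Since $X$ is compact abelian it is reflexive, so Fact \ref{sigma=+} identifies $\sigma(\widehat X,X)$ with the Bohr topology of $\widehat X$. But $\widehat X$ is discrete in its standard (compact-open) topology, whence this is nothing other than the Bohr modification of the discrete group $\widehat X$, i.e.\ the totally bounded topology $T_{X}$ on the abstract group $\widehat X$ induced by viewing the elements of $X$ as characters of $\widehat X$.

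To close the argument I would quote the well-known theorem, going back to Flor and used in exactly this form in \cite[Proposition 2.5]{DG13}, that every sequence converging to $0$ in the Bohr topology of a discrete abelian group is eventually equal to $0$. Applied to $\vs$, it forces $v_n=0$ for all sufficiently large $n$, which is the conclusion. The real obstacle is this classical Bohr-convergence theorem itself; modulo it, everything reduces to Pontryagin duality together with the tautological translation (via Lemma \ref{vv}(i) and Fact \ref{sigma=+}) of pointwise convergence on $X$ into convergence in the Bohr topology of the discrete dual.
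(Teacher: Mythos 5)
Your argument is correct, but it reaches the conclusion by a different route than the paper. The paper's proof is a one-line reduction to \cite[Proposition 2.5]{DG13}: since $\sv(X)=X$ is compact, that proposition yields an $m$ with $X=\sv(X)=\n_{\vs_{(m)}}(X)$, whence $v_n=0$ for all $n\ge m$. You instead translate the hypothesis, via Lemma \ref{vv}(i) and Fact \ref{sigma=+} (using reflexivity of compact abelian groups), into the statement that $\vs$ is a null sequence in the Bohr topology of the discrete group $\wh X$, and then quote the classical theorem that this topology has no non-trivial convergent sequences. Both proofs outsource the real content to a deep external result; the two results are essentially equivalent (the paper's introduction explicitly notes that the non-autocharacterizability of compact groups is equivalent to the Bohr-convergence theorem for discrete groups), so neither route is more elementary. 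What your version buys is transparency: it makes the duality mechanism visible and identifies exactly which classical fact is doing the work, whereas the paper's citation of \cite[Proposition 2.5]{DG13} packages that fact inside a structural statement about compact characterized subgroups (one that is reused elsewhere, e.g.\ in Lemma \ref{PnPAnChar}). The only caveat is a citation detail: \cite[Proposition 2.5]{DG13} is not literally the Flor-type Bohr-convergence theorem but the statement about $\sv(X)$ compact forcing $\sv(X)=\n_{\vs_{(m)}}(X)$; this does not affect the validity of your argument, which stands on the classical theorem alone.
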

\begin{proof}
It is clear from \eqref{ev0} that $\sv(X) = X$ if $\vs$ is eventually null.
Assume now that $X=s_\vs(X)$ for some $\vs\in\wh X^\N$. By \cite[Proposition 2.5]{DG13}, being $s_\vs(X)$ compact, there exists $m\in\N$ such that $X=s_\vs(X)=\n_{\vs_{(m)}}(X)$, and so $v_n=0$ for all $n\geq m$. 
\end{proof}

If one drops the compactness, then the conclusion of Proposition \ref{CCTWiii} may fail, as shown in the next example.

\begin{exa}\label{EnotEvnull}
\ \vspace{-.5cm}\\
\begin{itemize}
\item[(i)] Let $N$ be an infinite countable subgroup of $\T$. As mentioned in the introduction, $N$ is characterized in $\T$, then $N=\sv(\T)$ for a non-trivial sequence $\vs\in\Z^\N$. If $\us=\vs\rs_N$, then $\us$ is non-trivial (since $N$ is dense in $\T$) and $\su(N)=N$.
\item[(ii)]  Let $X = \R$ and $\vs = (v_n)\in \widehat \R^\N$ such that $v_0=0$ and $v_n(x) = \pi(\frac{x}{n})\in \T$ for every $x \in \R$ and $n\in \N_+$. Obviously, $\sv(\R) = \R$, even if $\vs$ is non-trivial. 
\item[(iii)] Let $X = \Q_p$, where $p$ is a prime. For every $n\in\N$, let $v_n = p^n\in \widehat{\Q_p}$. Obviously, $\sv(\Q_p) = \Q_p$, even if $\vs$ is non-trivial. 
\end{itemize}
\end{exa}

Motivated by Proposition \ref{CCTWiii} and Example \ref{EnotEvnull}, we give the following

\begin{defin}\label{PA}
A topological abelian group $X$ is \emph{autocharacterized} if $X=\sv(X)$ for some non-trivial $\vs\in\wh{X}^\N$.
\end{defin}

Items (ii) and (iii) of Example \ref{EnotEvnull} show that $\R$ and $\Q_p$ are autocharacterized.

\begin{remark}\label{wlog11}
\begin{itemize}
\item[(i)] Let $X$ be an autocharacterized topological abelian group, so let $\vs\in\wh X^\N$ be non-trivial and such that $X=s_\vs(X)$. Then there exists a one-to-one subsequence $\us$ of $\vs$ such that $u_n\neq 0$ for every $n\in\N$ and $X=s_\us(X)$.

Indeed, if $\chi\in \Gamma^\infty_\vs$, then $X=s_\vs(X)\leq\ker\chi$ and so $\chi=0$; therefore, $\Gamma_\vs^\infty$ is either empty or $\{0\}$. Since $\vs$ is non-trivial, $\Gamma_\vs^0$ is infinite, hence $X=s_\vs(X)=s_{\vs^0}(X)$ by Lemma \ref{RKchar}(ii). Let $\us$ be the one-to-one subsequence of $\vs^0$ such that $\Gamma_\us=\Gamma_{\vs^0}$, therefore $X=s_\us(X)$.

\item[(ii)] The above item shows that autocharacterized groups are $K$-characterized subgroups of themselves.
But one can prove actually that they are $T$-characterized subgroups of themselves (indeed, $TB$-characterized subgroups of themselves, see \cite{DAGB}).
\end{itemize}
\end{remark}

\subsection{Basic properties of autocharacterized groups}

We start by a direct consequence of Lemma \ref{aa}:

\begin{lemma}\label{aacor}
Let $X$ be a topological abelian group and $H$ a subgroup of $X$ such that $\n(X)\leq H$. Then $X$ is autocharacterized if and only if $X/\n(X)$ is autocharacterized. 
\end{lemma}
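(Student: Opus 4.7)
The plan is to derive Lemma \ref{aacor} as a refinement of Lemma \ref{aa} applied with $H=X$, where the only content going beyond that lemma is the preservation of non-triviality of the characterizing sequence under the correspondence between $\wh X$ and $\wh{X/\n(X)}$. The hypothesis on the auxiliary subgroup $H$ plays no role in the conclusion and can be ignored.

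The key preparatory observation is that the canonical projection $\pi\colon X\to X/\n(X)$ induces a group isomorphism
\[
\Phi\colon \wh{X/\n(X)}\longrightarrow \wh X,\qquad \chi\longmapsto \chi\circ\pi.
\]
Indeed, $\Phi$ is injective because $\pi$ is surjective, and it is surjective because every $v\in\wh X$ vanishes on $\n(X)=\bigcap_{\chi\in\wh X}\ker\chi$ by the very definition of the von Neumann radical, hence factors uniquely through $\pi$. In particular $\Phi(u)=0$ if and only if $u=0$, so the bijection $(u_n)\leftrightarrow(v_n)=(u_n\circ\pi)$ between $\wh{X/\n(X)}^\N$ and $\wh X^\N$ sends non-trivial sequences to non-trivial sequences in both directions.

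With this in hand, both implications are immediate. Suppose $X$ is autocharacterized, and let $\vs=(v_n)\in\wh X^\N$ be non-trivial with $X=s_\vs(X)$. Setting $u_n=\Phi^{-1}(v_n)$, the construction in the proof of Lemma \ref{aa} (applied to $H=X$) yields $X/\n(X)=s_\us(X/\n(X))$, and $\us$ is non-trivial by the observation above; thus $X/\n(X)$ is autocharacterized. Conversely, given a non-trivial $\us\in\wh{X/\n(X)}^\N$ with $X/\n(X)=s_\us(X/\n(X))$, the sequence $\vs=(u_n\circ\pi)=(\Phi(u_n))\in\wh X^\N$ is non-trivial, and Lemma \ref{aa} gives $X=s_\vs(X)$, so $X$ is autocharacterized. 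The only delicate point is the bijectivity of $\Phi$, which is forced by the definition of $\n(X)$; no real obstacle arises in the argument.
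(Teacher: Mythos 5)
Your proposal is correct and follows exactly the route the paper intends: the paper presents Lemma \ref{aacor} as a direct consequence of Lemma \ref{aa}, and your argument is precisely that derivation with $H=X$, supplemented by the (correct) observation that $\chi\mapsto\chi\circ\pi$ is an isomorphism $\wh{X/\n(X)}\to\wh X$ and hence carries non-trivial sequences to non-trivial sequences in both directions. This non-triviality check is the only content beyond Lemma \ref{aa} itself, and you have handled it properly.
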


The next proposition, describing an autocharacterized group in terms of the null sequences of its dual, follows from the definitions and Lemma \ref{vv}: 

\begin{prop}\label{prop:auotochar}
A topological abelian group $X$ is autocharacterized if and only if $(\widehat X,\sigma(\widehat X, X))$ has a non-trivial null sequence $\vs$ (in such a case $X = s_\vs(X)$). 
\end{prop}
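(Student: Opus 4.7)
The plan is to derive this as a direct corollary of Lemma \ref{vv}(i), together with the very definitions of $s_\vs(X)$ and of an autocharacterized group.

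First, I would specialize Lemma \ref{vv}(i) to the subgroup $L = X$ itself. That lemma, in full generality, asserts that for any $\vs \in \widehat{X}^{\mathbb{N}}$ and any subgroup $L \le X$, one has $v_n(x) \to 0$ in $\mathbb{T}$ for every $x \in L$ if and only if $v_n \to 0$ in the weak topology $\sigma(\widehat X, L)$. Taking $L = X$, the left-hand condition becomes: $v_n(x) \to 0$ for every $x \in X$, which is precisely the statement that $X \subseteq s_\vs(X)$, i.e., $s_\vs(X) = X$ (the reverse inclusion being trivial).

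Next, I would combine this with Definition \ref{PA}. By definition, $X$ is autocharacterized exactly when there exists a non-trivial $\vs \in \widehat{X}^{\mathbb{N}}$ with $s_\vs(X) = X$. By the equivalence just established, this is the same as requiring the existence of a non-trivial $\vs \in \widehat{X}^{\mathbb{N}}$ such that $v_n \to 0$ in $\sigma(\widehat{X}, X)$, that is, a non-trivial null sequence in $(\widehat{X}, \sigma(\widehat{X}, X))$. The parenthetical clause ``in such a case $X = s_\vs(X)$'' is simply a restatement of the forward implication.

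There is no real obstacle here: the whole proposition is a reformulation obtained by unwinding the definition of $\sigma(\widehat X, X)$ via the evaluation maps $\xi_x(\chi) = \chi(x)$. The only thing worth emphasizing in the write-up is the role of non-triviality, which is preserved verbatim on both sides of the equivalence, so no additional argument is needed beyond citing Lemma \ref{vv}(i) and Definition \ref{PA}.
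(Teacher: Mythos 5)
Your proof is correct and matches the paper's intended argument: the paper itself derives this proposition directly from the definitions and Lemma \ref{vv}(i) with $L=X$, exactly as you do. Nothing further is needed.
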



In the next lemma we see when a subgroup of an autocharacterized group is autocharacterized, and viceversa.

\begin{lemma}\label{directsummand}\label{lemma:Aug2}
Let $X$ be a topological abelian group and $H$ a subgroup of $X$. 
\begin{itemize}
\item[(i)] If $X$ is autocharacterized and $H$ is dense in $X$, then $H$ is autocharacterized.
\item[(ii)] If $H$ is autocharacterized and one of the following conditions holds, then $X$ is autocharacterized:
\begin{itemize}
\item[(a)] $H$ is a topological direct summand of $X$;
\item[(b)] $H$ is open and has finite index.
\end{itemize}
\end{itemize}
\end{lemma}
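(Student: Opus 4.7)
My plan handles the three items separately, using the normalization provided by Remark~\ref{wlog11}(i) (any autocharacterizing sequence can be replaced by a one-to-one subsequence of non-zero characters).

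For (i), start from a non-trivial $\vs \in \wh X^\N$ with $X = \sv(X)$; by Remark~\ref{wlog11}(i) I may assume that $v_n \neq 0$ for every $n$. Density of $H$ in $X$ then forces $v_n\rs_H \neq 0$ for every $n$ (otherwise continuity would give $v_n = 0$ on all of $X$), so $\us := (v_n\rs_H)$ is non-trivial, and Lemma~\ref{Lbasic}(i) gives $\su(H) = \sv(X) \cap H = X \cap H = H$, proving that $H$ is autocharacterized.

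For (ii)(a), the topological direct summand decomposition $X = H \oplus K$ supplies a continuous projection $\pi \colon X \to H$. If $H = \su(H)$ with $\us$ non-trivial, set $v_n := u_n \circ \pi \in \wh X$; then $v_n(h+k) = u_n(h) \to 0$ for every $h+k \in X$, so $X = \sv(X)$, and since $v_n\rs_H = u_n$ the sequence $\vs$ inherits non-triviality from $\us$.

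For (ii)(b), let $H = \su(H)$ with $\us$ non-trivial. Divisibility of $\T$ (Baer's criterion) provides an algebraic extension of each $u_n$ to $X$, and openness of $H$ makes any such extension automatically continuous. The non-trivial point is to choose the extensions $\tilde u_n$ so that $\tilde u_n(x) \to 0$ for \emph{every} $x \in X$, not merely on $H$. Using the structure theorem, decompose the finite quotient as $X/H = \bigoplus_{i=1}^r \langle \bar x_i \rangle$ with $\bar x_i$ of order $n_i$, and fix lifts $x_i \in X$; since $X/H$ is a direct sum, the only constraint on an extension is $n_i \tilde u_n(x_i) = u_n(n_i x_i)$ for each $i$. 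Because $n_i x_i \in H$, the right-hand side tends to $0$ in $\T$, so I can pick $\tilde u_n(x_i)$ to be the $n_i$-th root of $u_n(n_i x_i)$ of smallest absolute value in $\T$; then $\tilde u_n(x_i) \to 0$ as well, and linearity of $\tilde u_n$ yields $\tilde u_n(x) \to 0$ for every $x \in X$. The sequence $(\tilde u_n)$ is non-trivial because $\tilde u_n\rs_H = u_n$.

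The main obstacle is the coordinated lifting in (ii)(b): an arbitrary continuous extension of each $u_n$ would in general only guarantee $s_{(\tilde u_n)}(X) \supseteq H$, possibly a proper inclusion, so success depends on exploiting both the finiteness of $X/H$ (to use the structure theorem and pin down the constraints) and the fact that $\T$ admits small $n_i$-th roots of elements near $0$.
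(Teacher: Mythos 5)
Your parts (i) and (ii)(a) coincide with the paper's proof (restriction plus density for (i); extension through the projection onto $H$ for (ii)(a)). Part (ii)(b) is also correct, but you take a genuinely different route. The paper reduces by induction to the case $[X:H]=p$ prime, writes $X=H+\langle x\rangle$, splits into the cases $px=0$, $v_n(px)=0$ infinitely often, and $v_n(px)\neq 0$ infinitely often, and in the last case extends $v_n$ by an \emph{arbitrary} $p$-th root $b_n$ of $a_n=v_n(px)$ and then replaces $u_n$ by $w_n=pu_n$: multiplying by $p$ maps everything into $H$ (as $pX\leq H$), so $w_n\to 0$ pointwise, while $w_n(x)=a_n\neq 0$ keeps the sequence non-trivial. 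You instead treat the whole finite quotient at once: decomposing $X/H=\bigoplus_i\langle\bar x_i\rangle$ by the structure theorem ensures that the relation module of the lifts $x_i$ over $H$ is exactly $\bigoplus_i n_i\Z$, so the only compatibility constraints are $n_i\tilde u_n(x_i)=u_n(n_ix_i)$, and choosing the $n_i$-th root of smallest norm gives $\lb\tilde u_n(x_i)\rb\leq\lb u_n(n_ix_i)\rb/n_i\to 0$, whence $\tilde u_n\to 0$ pointwise on all of $X$ while $\tilde u_n\rs_H=u_n$ preserves non-triviality. Your argument buys a uniform, induction-free treatment (no case analysis, and Corollary \ref{NEW:lemmaK} is not needed to chain the prime-index steps), at the cost of invoking the structure theorem and of verifying that the direct-sum decomposition really does reduce the extension problem to the diagonal constraints $n_ic_i=u_n(n_ix_i)$ --- a point you assert correctly but which deserves the one-line justification that $\ker(\Z^r\to X/H)=\bigoplus_i n_i\Z$. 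The paper's multiply-by-$p$ trick avoids having to control the choice of root at all, which is why it can afford to pick $b_n$ arbitrarily.
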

\begin{proof}
(i) Let  $X = s_\vs(X)$ for $\vs\in\wh X$ with $v_n\neq 0$ for every $n\in\N$, and let $u_n = v_n\restriction_{H}\in\wh H$. Then each $u_n$ is non-zero and $H=s_\us(H)$.

(ii) Let $H=\sv(H)$ for some $\vs\in\wh H^\N$ with $v_n\neq0$ for every $n\in\N$.

(a) Let $X = H \times Z$. For every $n\in \N$, let $u_n$ be the unique character of $X$ that extends $v_n$ and such that $u_n$ vanishes on $Z$. Then $u_n \ne 0$ for every $n\in\N$, and $X=s_\us(X)$.

(b) Arguing by induction, we can assume without loss of generality that $[X:H]=p$ is prime. 
Let $X = H +\langle x\rangle$ with $x\not \in H$ and $px\in H$. If $px=0$, then $H$ is an open direct summand of $X$, so $H$ is also a topological direct summand of $X$, hence item (a) applies. 
Assume now that $px\ne 0$, and let $a_n = v_n(px)$ for every $n\in\N$. If $a_n = 0$ for infinitely many $n$, extend $v_n$ to $u_n\in\wh X^\N$ for those $n$ by letting $u_n(x) = 0$. Then obviously the sequence $\us$ obtained in this way is not eventually null and $X = s_\us(X)$, so $X$ is autocharacterized. 
Assume now that $a_n \ne 0$ for infinitely many $n$; for those $n$, pick an element $b_n\in \T$ with $pb_n = a_n$ and extend $v_n$ by letting $u_n(h + kx) = v_n(h) + kb_n$. Let $w_n = pu_n$. Then $w_n(x) = a_n\ne 0$, so $\ws$ is not eventually null. Moreover, $X = s_\us(X)$ as $pX \leq H$.
\end{proof}

\begin{lemma}\label{PnPAnChar}
Let $X$ be a topological abelian group and $\vs\in\wh{X}^\N$. If $F$ is a subgroup of $X$ such that $F\le\sv(X)$ and $F$ is not autocharacterized, then $F\le\n_{\vs_{(m)}}(X)$ for some $m\in\N$. 
\end{lemma}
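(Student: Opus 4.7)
The argument is essentially a contrapositive packaging of the definition of autocharacterized. The plan is to restrict the sequence $\vs$ to $F$, note that this restricted sequence witnesses $F = s_\us(F)$ by hypothesis, and then invoke the assumption that $F$ is not autocharacterized to force the restricted sequence to be eventually null.

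More precisely, I would proceed as follows. For each $n\in\N$, set $u_n := v_n\restriction_F$, which is a character of $F$ (with the induced topology), and let $\us = (u_n)\in \wh F^\N$. Since $F \leq \sv(X)$, every $x\in F$ satisfies $u_n(x) = v_n(x)\to 0$ in $\T$, so $F\le s_\us(F)$; the reverse inclusion is trivial, so $F = s_\us(F)$.

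Now the assumption that $F$ is not autocharacterized, together with Definition \ref{PA}, says precisely that any $\us\in\wh F^\N$ with $F = s_\us(F)$ must be trivial, i.e.\ eventually null. Hence there exists $m\in\N$ such that $u_n = 0$ for all $n\geq m$, which by the definition of $u_n$ means $F\le \ker v_n$ for all $n\geq m$. Consequently
\[
F\le \bigcap_{n\geq m}\ker v_n \;=\; \n_{\vs_{(m)}}(X),
\]
as required.

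There is no real obstacle here; the only point worth flagging is making sure one interprets ``$F$ is not autocharacterized'' correctly as ``every $\us\in\wh F^\N$ witnessing $F = s_\us(F)$ is eventually null'' (the contrapositive of Definition \ref{PA}), rather than, say, something about sequences on $X$.
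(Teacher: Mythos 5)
Your proof is correct and follows exactly the paper's argument: restrict $\vs$ to $F$, observe $F = s_{\us}(F)$, and use the non-autocharacterized hypothesis to conclude $\us$ is eventually null, hence $F\le \n_{\vs_{(m)}}(X)$. The paper's own proof is just a terser version of the same reasoning.
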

\begin{proof}
Let $u_n=v_n\restriction_F$ for every $n\in\N$ and $\us=(u_n)\in\wh{F}^\N$.
Then $F = \su(F)$, so the sequence $\us$ must be eventually null. Let $m\in\N$ such that $u_n =0$ for every $n\ge m$. Therefore, 
$F \leq\n_{\vs_{(m)}}(X)$. 
\end{proof}

The following consequence of Lemma \ref{PnPAnChar} is a generalization of \cite[Lemma 2.6]{DG13} where the group $X$ is compact.

\begin{cor}
Let $X$ be a topological abelian group, $F$ and $H$ subgroups of $X$ such that $F$ is compact and $F\le H$. Then $H/F\in\CH(X/F)$ if and only if $H\in\CH(X)$.
\end{cor}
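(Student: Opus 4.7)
The ``if'' direction is essentially cost-free: assuming $H/F\in\CH(X/F)$, Proposition \ref{PcntrimgChar} applied to the closed subgroup $F$ and the characterized subgroup $H/F$ of $X/F$ immediately gives that $\pi^{-1}(H/F)\in\CH(X)$, where $\pi:X\to X/F$ is the canonical projection. Since $F\le H$, one has $\pi^{-1}(H/F)=H$, so $H\in\CH(X)$.

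For the ``only if'' direction I would exploit the compactness of $F$ via the fact (Proposition \ref{CCTWiii}) that compact abelian groups are not autocharacterized. Start from a sequence $\vs=(v_n)\in\wh{X}^\N$ with $H=s_\vs(X)$. Because $F\le H=s_\vs(X)$ and $F$ fails to be autocharacterized, Lemma \ref{PnPAnChar} yields an $m\in\N$ with $F\le\n_{\vs_{(m)}}(X)$; equivalently, $v_n$ vanishes on $F$ for all $n\ge m$. Hence each such $v_n$ factors uniquely as $v_n=u_n\circ\pi$ for a character $u_n\in\wh{X/F}$. Setting $\us=(u_n)_{n\ge m}$, I claim that $H/F=\su(X/F)$: for $x\in X$ and $n\ge m$ one has $u_n(\pi(x))=v_n(x)$, so $\pi(x)\in\su(X/F)$ precisely when $v_n(x)\to 0$, i.e.\ when $x\in s_{\vs_{(m)}}(X)=s_\vs(X)=H$, and this happens precisely when $\pi(x)\in H/F$ (here the equality $H+F=H$ from $F\le H$ is needed to pass from ``$x\in H$'' to ``$\pi(x)\in H/F$''). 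Thus $H/F\in\CH(X/F)$.

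The only step that requires any real input is the application of Lemma \ref{PnPAnChar}, and the use of compactness enters exactly there: it guarantees, via Proposition \ref{CCTWiii}, that $F$ is not autocharacterized, which is precisely the hypothesis needed to conclude that almost all of the $v_n$'s annihilate $F$ and therefore descend to $X/F$. Without compactness (or some substitute ruling out that $F$ is autocharacterized), Example \ref{EnotEvnull} shows that the implication ``$H\in\CH(X)\Rightarrow H/F\in\CH(X/F)$'' can fail, so this is the genuine obstacle; everything else is just bookkeeping with the factorization $v_n=u_n\circ\pi$ and the identity $s_\vs(X)=s_{\vs_{(m)}}(X)$. Note also that this is the natural generalization of \cite[Lemma 2.6]{DG13}, the added generality being that $X$ is no longer assumed compact—only $F$ is.
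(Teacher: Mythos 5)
Your proof is correct and takes essentially the same route as the paper's: the ``if'' direction via Proposition \ref{PcntrimgChar}, and the ``only if'' direction via Proposition \ref{CCTWiii} and Lemma \ref{PnPAnChar} to obtain $F\le\n_{\vs_{(m)}}(X)$. The only (cosmetic) difference is at the very end, where the paper passes through the intermediate quotient $X/\n_{\vs_{(m)}}(X)$ using Lemma \ref{TnewTeo} and a second application of Proposition \ref{PcntrimgChar}, while you inline that verification by factoring the $v_n$ ($n\ge m$) directly through $X/F$.
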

\begin{proof}
Denote by $\pi:X\to X/F$ the canonical projection.
If $H/F$ is a characterized subgroup of $X/F$ then $H=\pi^{-1}(H/F)$ is a characterized subgroup of $X$ by Proposition \ref{PcntrimgChar}. 
Assume now that $H=s_\vs(X)$ for some $\vs\in\wh X^\N$. Since $F$ is compact, Proposition \ref{CCTWiii} implies that $F$ is not autocharacterized. By Lemma \ref{PnPAnChar}, $F$ is contained in $\n_{\vs_{(m)}}(X)$ for a sufficiently large $m\in\N$. Let $\pi':X/F\to X/\n_{\vs_{(m)}}(X)$ be the canonical projection and $q=\pi'\circ\pi$, then $q^{-1}(H/\n_{\vs_{(m)}}(X))=H=\sv(X)=s_{\vs_{(m)}}(X)$. Therefore, one deduces from Lemma \ref{TnewTeo} that $H/\n_{\vs_{(m)}}(X)$ is a characterized subgroup of $X/\n_{\vs_{(m)}}(X)$. Hence, by Proposition \ref{PcntrimgChar}, $H/F=(\pi')^{-1}({H/\n_{\vs_{(m)}(X)}})\in\CH(X/F)$.
\end{proof}

The argument of the above proof fails in case $F$ is not compact. For example, take $F=H=X=N$, where $N$ is as in Example \ref{EnotEvnull}; then one cannot conclude that  $\vs\restriction_F$ is eventually null, and hence that $F$ is contained in $\n_{\vs_{(m)}}(X)$.

\subsection{Criteria describing autocharacterized groups}

Here we give two criteria for a group to be autocharacterized. We start below with a criteria for locally compact abelian groups, while a general one, in terms of  the Bohr compactification, will be given at the end of the section. 
 
\medskip
We established in Proposition \ref{CCTWiii} that no compact abelian group is autocharacterized, now
we prove in Theorem \ref{compactnonac} that this property describes the compact abelian groups within the larger class of all locally compact abelian groups. This follows easily from  Lemma \ref{directsummand}(ii) for the locally compact abelian groups that contain a copy of $\R$, while the general cases requires the following deeper argument. 

\begin{teo}\label{compactnonac}
If $X$ is a locally compact abelian group, then $X$ is autocharacterized if and only if $X$ is not compact. 
\end{teo}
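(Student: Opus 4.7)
The ``only if'' direction is immediate from Proposition \ref{CCTWiii}. For the ``if'' direction, I would use the structure theorem for locally compact abelian groups: any such $X$ decomposes as $X \cong \R^n \times X_0$ where $n\in\N$ and $X_0$ admits an open compact subgroup $K$. If $n \geq 1$, then $\R$ is a topological direct summand of $X$, and since $\R$ is autocharacterized by Example \ref{EnotEvnull}(ii), Lemma \ref{directsummand}(ii)(a) transfers this to $X$. So the work is in the case $n = 0$, i.e.\ $X$ has an open compact subgroup $K$ and, because $X$ is non-compact, $X/K$ is an infinite discrete abelian group.

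For this case, my strategy is to produce a non-trivial null sequence in $(\wh X, \sigma(\wh X, X))$ and then invoke Proposition \ref{prop:auotochar}. I would search for such a sequence inside $K^\perp \leq \wh X$, which is naturally identified with $\wh{X/K}$: any character killing $K$ converges pointwise to $0$ on $X$ if and only if it converges pointwise to $0$ on the discrete quotient $X/K$, which for discrete $X/K$ is exactly convergence in the compact-open topology of $\wh{X/K}$. So the problem reduces to the statement that the infinite compact abelian group $\wh{X/K}$ admits a non-trivial null sequence.

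This is a purely internal fact about compact abelian groups, which I would isolate as a lemma: every infinite compact abelian group $G$ has a non-trivial null sequence. Writing $G = \wh A$ via Pontryagin duality with $A$ infinite discrete abelian, and picking a countably infinite subgroup $A_0 \leq A$, the restriction map $r : G \to \wh{A_0}$ is a continuous surjective homomorphism (surjectivity from divisibility of $\T$) onto the infinite metrizable compact abelian group $\wh{A_0}$ (Theorem \ref{def}(b)). Pick a non-trivial null sequence $y_n \to 0$ in $\wh{A_0}$ with each $y_n \ne 0$, lift to $\chi_n \in r^{-1}(y_n) \subseteq G$, extract a convergent subsequence $\chi_{n_k} \to \chi$ by compactness of $G$, observe $r(\chi) = \lim y_{n_k} = 0$, and translate to get the null sequence $\chi_{n_k} - \chi \to 0$, whose non-triviality is forced by $r(\chi_{n_k} - \chi) = y_{n_k} \ne 0$ for every $k$.

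The main obstacle I anticipate is precisely this lifting step: raw lifts of a null sequence in a quotient need not themselves converge, and compactness only produces a convergent subsequence of arbitrarily chosen lifts. The trick is to translate by the cluster point $\chi$ while using the projection $r$ as a ``detector'' that guarantees the shifted sequence remains pointwise nonzero. Everything else is a routine application of the structure theorem, Example \ref{EnotEvnull}(ii), and the already-developed correspondence between annihilators, duals of quotients, and the weak topology $\sigma(\wh X, X)$.
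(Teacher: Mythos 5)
Your ``only if'' direction is the paper's, and the architecture of your ``if'' direction is correct and genuinely different from the paper's: the paper stays entirely on the dual side (if $X$ is not autocharacterized then $\wh X$ has no non-trivial null sequences in its Bohr topology, hence, since $\wh X$ and $\wh X^{+}$ have the same null sequences, none in its own topology, hence $\wh X$ is discrete and $X$ is compact), whereas you work on $X$ directly through the structure theorem, Lemma \ref{directsummand}(ii)(a) for the $\R^n$-factor, and the correct identification of non-trivial $\sigma(\wh X,X)$-null sequences supported in $K^\perp$ with non-trivial null sequences of the infinite compact group $\wh{X/K}$. Both routes bottleneck at the same statement: \emph{every infinite compact abelian group has a non-trivial null sequence}.

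The gap is in your proof of that statement. Having lifted the null sequence $(y_n)$ of $\wh{A_0}$ to arbitrarily chosen preimages $\chi_n\in r^{-1}(y_n)$, you ``extract a convergent subsequence by compactness of $G$''. Compact groups need not be sequentially compact, so this step fails: for $G=\{0,1\}^{\cc}$ (the dual of a $\Z(2)$-vector space of dimension $\cc$, which does arise as $\wh{X/K}$), indexing the coordinates by the subsets $A$ of $\N$ and letting $\chi_n$ have $A$-th coordinate equal to $1$ exactly when $n\in A$ produces a sequence with no convergent subsequence, and nothing prevents your lifts from looking like this on the coordinates not seen by $r$. A convergent subnet always exists, but a subnet does not produce the sequence that Proposition \ref{prop:auotochar} requires. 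This is precisely where the entire difficulty of the theorem is concentrated; the paper's proof deliberately quotes it as a deep fact, resting on the dyadicity of compact groups (or on the Efimov--Gerlits--Hagler theorem that an infinite compact group $K$ contains a copy of $\{0,1\}^{w(K)}$). If you want to avoid those tools, replace your metrizable quotient of $\wh{X/K}$ by an infinite compact metrizable \emph{subgroup}: the infinite discrete group $X/K$ admits a countably infinite quotient group $D$ (this itself needs a little structure theory of abelian groups, treating the torsion-free and torsion cases separately), and then $\wh D\cong\bigl(\ker(X/K\to D)\bigr)^\perp$ is an infinite compact metrizable subgroup of $\wh{X/K}$; any non-trivial null sequence of $\wh D$ is already a null sequence of $\wh{X/K}$, and no lifting is needed.
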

\begin{proof} 
If $X$ is autocharacterized, then $X$ is not compact according to Proposition \ref{CCTWiii}. Assume now that $X$ is not autocharacterized. 
Then by Fact \ref{sigma=+} and Proposition \ref{prop:auotochar} the dual $\wh X$ has no non-trivial null sequences in its Bohr topology. 
But since $\wh X$ is locally compact, it has the same null sequences as its Bohr modification $\wh X^+$. Therefore, $\wh X$ is
a locally compact group without non-trivial null sequences. We have to conclude that $X$ is compact.

This follows from the conjunction of several facts. The first one is the deep fact that non-discrete locally compact abelian groups have non-trivial null sequences. (This follows, in turn, from that fact that a non-discrete locally compact abelian group either contains a line $\R$, or an infinite compact subgroup. Since
compact groups are dyadic compacts, i.e., continuous images of Cantor cubes, they have non-trivial null sequences.)
Now we can conclude that the locally compact group $\wh X$ is discrete. It is a well known fact that this implies compactness of $X$. 
\end{proof}

\begin{rem}
An alternative argument to prove that non-discrete locally compact abelian groups have non-trivial null sequences is based on a theorem by Hagler, Gerlits and Efimov (proved independently also by Efimov in \cite{Efimov}). It states that every infinite compact group $K$ contains a copy of the Cantor cube $\{0,1\}^{w(K)}$, which obviously has a plenty of non-trivial null sequences.
\end{rem}

In order to obtain a general criterion describing autocharacterized groups we need another relevant notion in the theory of characterized subgroups: 

\begin{defin}\cite{DMT05}
Let $X$ be a topological abelian group and $H$ a subgroup of $X$. Let
$$\g_X(H)=\bigcap\lgr \sv(X): \vs\in\wh{X},\ H\le\sv(X)\rgr.$$
A subgroup $H$ of $X$ is said to be:
\begin{itemize}
\item[(i)] \emph{$\g$-dense} if $\g_X(H)=X$;
\item[(ii)] \emph{$\g$-closed} if $\g_X(H)=H$.
\end{itemize}
\end{defin}

We write simply $\g(H)$ when there is no possibility of confusion. Clearly, $\g(H)$ is a subgroup of $X$ containing $H$. Moreover, $\g(\{0\})$ is the intersection of all characterized subgroups of $X$ and $\g(\{0\})\leq\n(X)$.

\begin{rem}
Let $X=(X,\tau)$ be a topological abelian group and $H$ a subgroup of $X$. 
\begin{itemize}
\item[(i)]  If $X_1$ is another topological abelian group and $\phi:X\to X_1$ a continuous homomorphism, then $\phi(\g_X(H))\leq \g_{X_1}(\phi(H))$ (see \cite[Proposition 2.6]{DMT05}).
\item[(ii)] Moreover, $\g_X(H)\leq \overline H^{\tau^+}$. Indeed, $\overline H^{\tau^+}=\bigcap\{\ker\chi:\chi\in\wh X,\ H\leq\ker\chi\}$ and $\ker\chi=\n_\vs(X)=s_\vs(X)$ for $\vs\in\wh X^\N$ with $\Gamma_\vs=\Gamma_\vs^\infty=\{\chi\}$ (i.e., $\vs$ is the constant sequence given by $\chi$).

Item (i) says, in terms of \cite{Dik06,DTho}, that $\g$ is a closure operator in the category of topological abelian groups. The inclusion $\g_X(H)\leq \overline H^{\tau^+}$ says that $\g$ is finer than the closure operator defined by $H \mapsto  \overline H^{\tau^+}$. 
\item[(iii)] If $H$ is dually closed, then $H$ is $\g$-closed by item (ii).
\item[(iv)] If $(X,\tau)$ is a locally compact abelian group, then every closed subgroup of $(X,\tau)$ is dually closed, and so (ii) implies that $\g(H)\leq\overline H^\tau$ for every subgroup $H$ of $X$. Therefore, $\g$-dense subgroups are also dense in this case.
\item[(v)] The inclusion $\g(H) \leq \overline H$ may fail if the group $H$ is not MAP (e.g., if $X$ is MinAP, then $\g(H) = X$ for every $H$, while $X$ may have proper closed subgroups). 
\end{itemize}
\end{rem}

The next result shows that the autocharacterized precompact abelian groups are exactly the dense non-$\mathfrak g$-dense subgroups of the compact abelian groups.

\begin{teo}\label{acprec}
Let $X$ be a precompact abelian group. The following conditions are equivalent:
\begin{itemize}
\item[(i)] $X$ is autocharacterized;
\item[(ii)] $X$ is not $\mathfrak g$-dense in its completion $\widetilde X$.
\end{itemize}
\end{teo}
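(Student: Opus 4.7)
The plan is to reduce both conditions to the same statement about sequences of characters of the (compact) completion $\widetilde X$, using Proposition \ref{CCTWiii} as the bridge.

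First, recall that since $X$ is precompact, $\widetilde X$ is a compact abelian group and restriction gives a natural group isomorphism $\rho \colon \widehat{\widetilde X} \to \widehat X$, with inverse given by continuous extension; in particular $\rho$ preserves being non-zero (a character vanishing on the dense subgroup $X$ must vanish everywhere). Consequently, a sequence $\vs \in \widehat{\widetilde X}^\N$ is non-trivial if and only if the restricted sequence $\vs\!\restriction_X \in \widehat X^\N$ is non-trivial, and for every such $\vs$ and every $x \in X$ one has $v_n(x) \to 0$ in $\T$ iff $(v_n\!\restriction_X)(x) \to 0$. Thus $X \leq s_\vs(\widetilde X)$ (viewing $\vs$ in $\widehat{\widetilde X}^\N$) is equivalent to $X = s_{\vs\restriction_X}(X)$ (viewing the restricted sequence in $\widehat X^\N$).

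Next, I unpack $\mathfrak g$-density. By definition, $X$ is $\mathfrak g$-dense in $\widetilde X$ iff every $\vs \in \widehat{\widetilde X}^\N$ with $X \leq s_\vs(\widetilde X)$ satisfies $s_\vs(\widetilde X) = \widetilde X$. Since $\widetilde X$ is compact, Proposition \ref{CCTWiii} tells us that $s_\vs(\widetilde X) = \widetilde X$ is equivalent to $\vs$ being eventually null, i.e.\ $\vs$ being trivial. Therefore $X$ is \emph{not} $\mathfrak g$-dense in $\widetilde X$ iff there exists a non-trivial $\vs \in \widehat{\widetilde X}^\N$ with $X \leq s_\vs(\widetilde X)$.

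Combining the two observations: $X$ is not $\mathfrak g$-dense in $\widetilde X$ iff there exists a non-trivial $\vs \in \widehat{\widetilde X}^\N$ with $X \leq s_\vs(\widetilde X)$ iff (via $\rho$) there exists a non-trivial $\us \in \widehat X^\N$ with $X = s_\us(X)$, which is exactly the definition of $X$ being autocharacterized. This completes both directions simultaneously.

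The proof is essentially bookkeeping once the dictionary $\widehat X \cong \widehat{\widetilde X}$ is set up; the only subtle point is confirming that ``non-trivial'' survives both the restriction $\widehat{\widetilde X} \to \widehat X$ and its inverse. This is where density of $X$ in $\widetilde X$ is used in an essential way, and it is the one step that would deserve an explicit line in the written proof.
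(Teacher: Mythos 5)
Your proof is correct and follows essentially the same route as the paper's: both directions hinge on Proposition \ref{CCTWiii} applied to the compact group $\widetilde X$, combined with the restriction/extension bijection $\widehat{\widetilde X}\cong\widehat X$ (which, as you note, preserves non-triviality because $X$ is dense). You merely package the paper's two implications as a single chain of equivalences, which is a matter of presentation rather than substance.
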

\begin{proof}
(ii)$\Rightarrow$(i) Assume that $X$ is not $\mathfrak g$-dense in $K:=\widetilde X$. Then there exists a sequence $\vs\in\wh K^\N$ such that $X\leq s_\vs(K)<K$. By Proposition \ref{CCTWiii} (see also Remark \ref{wlog11}), we may assume without loss of generality that $v_n\neq 0$ for every $n\in\N$. Let $u_n=v_n\restriction_X$ for every $n\in\N$. Since $X$ is dense in $K$, clearly $u_n\neq0$ for every $n\in\N$. Moreover, $X=s_\us(X)$, hence $X$ is autocharacterized. 

(i)$\Rightarrow$(ii) Suppose that $X$ is autocharacterized, say $X = s_\us(X)$ for $\us\in\widehat X^\N$ such that $u_n\neq 0$ for every $n\in\N$. For ever $n\in\N$, let $v_n\in\wh K$ be the extension of $u_n$ to $K$. Then $X \leq s_\vs(K) < K$ by Proposition \ref{CCTWiii}, so $X$ is not $\mathfrak g$-dense in $K$.
\end{proof}

If $X$ is a MAP abelian group, then $\tau^+$ is precompact and the Bohr compactification of $X$ is $r_X:X\to bX$, where $bX$ is the completion of $(X,\tau^+)$ and $r_X$ is an injective homomorphism.
If $X$ is not MAP, then $\n(X)\neq\{0\}$. Consider the quotient $X/\n(X)$, which is a MAP group. Then take the Bohr compactification $r_{X/\n(X)}:X/\n(X)\to b(X/\n(X))$. The Bohr compactification of $X$ is $r_X:X\to bX$, where $bX:=b(X/\n(X))$ and $r_X=r_{X/\n(X)}\circ \pi$, where $\pi:X\to \n(X)$ is the canonical projection.

\begin{cor}\label{Teo:PA1}
Let $X$ be a MAP abelian group. The following conditions are equivalent: 
\begin{itemize}
\item[(i)] $X$ is autocharacterized;
\item[(ii)] $X$ is not $\mathfrak g$-dense in $bX$. 
\end{itemize}
\end{cor}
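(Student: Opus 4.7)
The plan is to reduce the general MAP case to the precompact case already settled by Theorem~\ref{acprec}, via the Bohr modification.

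First I would exploit the hypothesis that $X$ is MAP to ensure that its Bohr topology $\tau^+$ is Hausdorff; since $\tau^+$ is always totally bounded, the group $(X,\tau^+)$ is precompact, and its two-sided completion is (by construction) precisely the Bohr compactification $bX$. Note that, as $X$ is MAP, the canonical map $r_X\colon X\to bX$ is injective, so we may identify $X$ with its image as a subgroup of $bX$.

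Next I would observe that the property of being autocharacterized depends only on the algebraic data $\wh X$ and on the pointwise convergence $v_n(x)\to 0$ in $\T$, and so makes no reference to the topology on $X$. Since $\tau$ and $\tau^+$ are compatible, they share the same dual, and Lemma~\ref{compatible} yields $\CH(X,\tau)=\CH(X,\tau^+)$; in particular, $X$ is autocharacterized as $(X,\tau)$ if and only if $(X,\tau^+)$ is autocharacterized.

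Finally, applying Theorem~\ref{acprec} to the precompact group $(X,\tau^+)$, with completion $bX$, gives: $(X,\tau^+)$ is autocharacterized if and only if $X$ (i.e.\ the image of $r_X$) is not $\mathfrak g$-dense in $bX$. Chaining this with the previous equivalence produces the desired statement of the corollary. There is no substantial obstacle; the only point requiring attention is the bookkeeping passage $X\rightsquigarrow(X,\tau^+)$, which is immediate from compatibility and from the topology-free nature of the definition of autocharacterized group.
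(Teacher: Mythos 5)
Your proposal is correct and follows essentially the same route as the paper: pass to the Bohr modification $X^+$, which is precompact with completion $bX$ since $X$ is MAP, use compatibility of $\tau$ and $\tau^+$ (Lemma~\ref{compatible}) to transfer the autocharacterized property, and then apply Theorem~\ref{acprec}. No issues.
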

\begin{proof}
Since $X$ is MAP, $X$ embeds in $bX$.
By Lemma \ref{compatible}, $X$ and $X^+$ have the same characterized subgroups. Moreover, $X^+$ is precompact and by definition $bX$ is the completion of $X^+$. Then it suffices to apply Theorem \ref{acprec}.
\end{proof}

\begin{teo}\label{autocarcar}
Let $X$ be a topological abelian group. The following conditions are equivalent: 
\begin{itemize}
\item[(i)] $X$ is autocharacterized;
\item[(ii)] $r_X(X)$ is not $\mathfrak g$-dense in $bX$.
\end{itemize}
\end{teo}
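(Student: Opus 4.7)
The plan is to reduce to the MAP case already handled in Corollary \ref{Teo:PA1}, using the quotient $X/\mathfrak{n}(X)$ and the very definition of the Bohr compactification of a non-MAP group.

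First, I would observe that, by construction, $bX = b(X/\mathfrak{n}(X))$ and the Bohr compactification map factors as $r_X = r_{X/\mathfrak{n}(X)} \circ \pi$, where $\pi\colon X \to X/\mathfrak{n}(X)$ is the canonical projection. Since $\pi$ is surjective, this immediately gives the set-theoretic equality
\[
r_X(X) = r_{X/\mathfrak{n}(X)}(X/\mathfrak{n}(X))
\]
as subgroups of $bX = b(X/\mathfrak{n}(X))$.

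Next, I would invoke Lemma \ref{aacor}, which tells us that $X$ is autocharacterized if and only if $X/\mathfrak{n}(X)$ is autocharacterized. Since $X/\mathfrak{n}(X)$ is MAP, Corollary \ref{Teo:PA1} applies to it and yields the equivalence: $X/\mathfrak{n}(X)$ is autocharacterized if and only if $r_{X/\mathfrak{n}(X)}(X/\mathfrak{n}(X))$ is not $\mathfrak{g}$-dense in $b(X/\mathfrak{n}(X))$. Combining with the identification above, this is exactly the condition that $r_X(X)$ fails to be $\mathfrak{g}$-dense in $bX$. Chaining the two equivalences completes the proof.

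There is no real obstacle here; the only point that deserves care is the clean bookkeeping of the identifications $bX = b(X/\mathfrak{n}(X))$ and $r_X(X) = r_{X/\mathfrak{n}(X)}(X/\mathfrak{n}(X))$, which follow directly from the definition of the Bohr compactification in the non-MAP case given just before the statement. Once those are in place, the theorem is essentially a combination of Lemma \ref{aacor} with Corollary \ref{Teo:PA1}.
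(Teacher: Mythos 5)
Your proposal is correct and follows exactly the paper's own argument: reduce to the MAP quotient $X/\n(X)$ via Lemma \ref{aacor} and then apply Corollary \ref{Teo:PA1}, using the factorization $r_X=r_{X/\n(X)}\circ\pi$ to identify $r_X(X)$ with $r_{X/\n(X)}(X/\n(X))$ inside $bX=b(X/\n(X))$. The only difference is that you spell out the bookkeeping of these identifications, which the paper leaves implicit.
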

\begin{proof}
Since $X$ is autocharacterized precisely when $X/\n(X)$ is autocharacterized by Lemma \ref{aacor}, apply Corollary \ref{Teo:PA1} to conclude.
\end{proof}

%

\section{$K$-characterized subgroups}\label{Ksec}

We start by recalling the following result from \cite{CTT93}: if $X$ is a compact abelian group and $\vs\in\widehat{X}^\N$ has a one-to-one subsequence, then $\sv(X)$ has Haar measure $0$ in $X$. In our terms, it reads as follows: 

\begin{lemma}\emph{\cite[Lemma 3.10]{CTT93}}\label{LCTW}\label{CKnotOpen}
If $X$ is a compact abelian group and $H \in \CH_K(X)$, then $H$ has Haar measure $0$ (hence, $[X:H]$ is infinite).
In particular, no open subgroup of $X$ is $K$-characterized. 
\end{lemma}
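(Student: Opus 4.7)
The plan is to argue by contrapositive: if $H = s_\vs(X)$ has positive Haar measure, then $\vs$ cannot be finitely many-to-one. Let $\mu$ denote the normalized Haar measure on $X$, and suppose $\mu(H) > 0$.

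First, I would observe that $H$ is $\FF$ by Lemma \ref{Lbasic}(iv), hence Borel measurable, so Steinhaus's theorem applies: $H - H$ contains an open neighborhood of $0$. Since $H$ is a subgroup, $H = H - H$, so $H$ is open. An open subgroup of a compact abelian group is clopen and of finite index, so $H$ is itself a \emph{compact} abelian group and the quotient $X/H$ is finite and discrete.

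Next, I would invoke Lemma \ref{perp}(i) to identify $H^\perp$ algebraically with $\wh{X/H}$; as $X/H$ is finite discrete, $H^\perp$ is a \emph{finite} subgroup of $\wh X$. Now set $u_n := v_n\rs_H \in \wh H$ for each $n \in \N$ and $\us := (u_n)$. The equality $H = s_\vs(X)$ translates directly into $s_\us(H) = H$. Since $H$ is compact, Proposition \ref{CCTWiii} tells us that $\us$ must be eventually null, i.e., $v_n \in H^\perp$ for all sufficiently large $n$. But then the tail of $\vs$ takes values in the finite set $H^\perp$, so by the pigeonhole principle some character must appear infinitely many times in $\vs$, contradicting the assumption $\Gi = \emptyset$. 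This contradiction forces $\mu(H) = 0$, and in particular $[X:H]$ must be infinite (since every open, equivalently finite-index, subgroup of a compact group has Haar measure equal to the reciprocal of its index, hence positive).

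The ``in particular'' clause then follows immediately: any open subgroup of $X$ has strictly positive Haar measure, so cannot belong to $\CH_K(X)$ by what we just proved.

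The main conceptual obstacle is bridging the measure-theoretic hypothesis $\mu(H) > 0$ with the combinatorial constraint on $\vs$; Steinhaus's theorem is the key tool that turns positive measure into openness, after which compactness of $H$ combined with Proposition \ref{CCTWiii} (no compact abelian group is autocharacterized) reduces the problem to the finiteness of $H^\perp$. No delicate estimates are required; one only needs to verify Borel measurability of $H$ (immediate from the $\FF$-structure) and that the restriction map sends $\vs$ to a sequence to which Proposition \ref{CCTWiii} applies.
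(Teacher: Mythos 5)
Your argument is correct, but it necessarily differs from the paper, because the paper offers no proof at all: Lemma \ref{LCTW} is imported verbatim as \cite[Lemma 3.10]{CTT93}, whose classical proof is Fourier-analytic (if $\mu(H)>0$ and $v_n(x)\to 0$ on $H$, dominated convergence forces the Fourier coefficients $\int_H e^{2\pi \I v_n}\,d\mu$ to converge to $\mu(H)>0$, while Bessel's inequality forces them to $0$ along any one-to-one subsequence of $\vs$). Your route is softer: Steinhaus--Weil turns positive measure into openness, openness in a compact group gives finiteness of $H^\perp\cong\wh{X/H}$ via Lemma \ref{perp}(i), and then Lemma \ref{Lbasic}(i) plus Proposition \ref{CCTWiii} applied to the compact group $H$ forces the tail of $\vs$ into the finite set $H^\perp$, contradicting $\Gi=\emptyset$ by pigeonhole. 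This is legitimate within the paper's logical order, since Proposition \ref{CCTWiii} precedes the lemma; what it buys is the avoidance of any explicit harmonic analysis, and it even yields the stronger CTT93 form (for sequences merely admitting a one-to-one subsequence, since such a subsequence cannot eventually live in a finite set). What it costs is that all the analytic depth is now hidden inside Proposition \ref{CCTWiii}, i.e., inside \cite[Proposition 2.5]{DG13}; as a from-scratch proof one must check that that result is not itself derived from \cite[Lemma 3.10]{CTT93}, or the argument becomes circular at the level of the literature. Two cosmetic points: your parenthetical ``open, equivalently finite-index'' is not literally true for arbitrary subgroups of compact groups (only for measurable or closed ones), though it is harmless here since $H$ is $\FF$ and hence measurable, so a finite index would give $\mu(H)=1/[X:H]>0$; and the final ``in particular'' clause is fine since open subgroups are clopen of finite index and thus of positive measure.
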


Lemma \ref{LCTW} cannot be inverted, take for example the constant sequence $\us=(1)$ in $\wh{\T}^\N$. 

\begin{rem}\label{m0}
If $X$ is a connected compact abelian group, then the conclusion of Lemma \ref{LCTW} holds for all non-trivial sequences in $\wh{X}$, since every measurable proper subgroup $H$ of $X$ has measure $0$ (indeed, $X$ is divisible, so the proper subgroup $H$ of $X$ has infinite index, hence the measure of $H$ must be $0$, as $X$ has measure $1$).
\end{rem}


\begin{exa} 
Here we provide examples of non-autocharacterized non-compact abelian groups. 
\begin{itemize}
\item[(i)] A relatively simple example can be obtained by taking a dense non-measurable subgroup $X$ of a connected compact abelian group $K$. Since we intend to deduce that $X$ is non-autocharacterized by using Theorem \ref{acprec}, we have to check that $X$ is $\mathfrak g$-dense in $K$. Indeed, every measurable proper subgroup of $K$ has measure $0$ as noted in Remark \ref{m0}, therefore every proper characterized (hence, every non-$\mathfrak g$-dense) subgroup of $K$ has measure $0$. Therefore, $X$ is not contained in any proper characterized subgroup of $K$, i.e., $X$ is $\mathfrak g$-dense in $K$. 
\item[(ii)] More sophisticated examples of $\mathfrak g$-dense subgroups of metrizable compact abelian groups were given in \cite{BDMW03} (under the assumption of Martin Axiom) and in  \cite{HK05} (in ZFC). These groups have the additional property of being of measure zero (so that the above elementary argument cannot be used to verify the $\mathfrak g$-density). 
\end{itemize}
\end{exa}

\subsection{When closed subgroups of infinite index are $K$-characterized}

The next theorem gives a sufficient condition (see item (iii)) for a closed subgroup of infinite index $H$ to be $K$-characterized. 
This condition implies, as a by-product, that $H$ is also $N$-characterized.

The easier case of open subgroups will be tackled in Theorem \ref{Kcar=car}, by applying Theorem \ref{T1122}.

\begin{teo}\label{T1122} 
Let $X$ be a topological abelian group and $H$ a closed subgroup of $X$ of infinite index. The following conditions are equivalent: 
\begin{itemize}
\item[(i)] $H\in\CH_K(X)\cap \CH_N(X)$;
\item[(ii)] $H\in\CH_N(X)$;
\item[(iii)] $X/H$ is MAP and $(\wh{X/H},\sigma(\wh{X/H},X/H))$ is separable.
\end{itemize}
\end{teo}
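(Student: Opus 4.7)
The plan is to establish the cycle (i)$\Rightarrow$(ii)$\Rightarrow$(iii)$\Rightarrow$(i), with (i)$\Rightarrow$(ii) being trivial. For (ii)$\Rightarrow$(iii), write $H=\n_\vs(X)=\bigcap_n\ker v_n$ for some $\vs\in\wh X^\N$. Each $v_n$ vanishes on $H$ and hence factors as $v_n=u_n\circ\pi$ with $u_n\in\wh{X/H}$, where $\pi:X\to X/H$ is the canonical projection, and $\bigcap_n\ker u_n=\{0\}$ in $X/H$. The countable set $\{u_n\}$ separates points of $X/H$, so $X/H$ is MAP. For separability of $(\wh{X/H},\sigma(\wh{X/H},X/H))$, consider the countable subgroup $L=\langle u_n:n\in\N\rangle$; its annihilator $L^\perp$ inside $X/H$ is contained in $\bigcap_n\ker u_n=\{0\}$, and by Comfort--Ross duality (Theorem \ref{def}) applied to the precompact Hausdorff group $(\wh{X/H},\sigma)$, whose continuous character group is exactly $X/H$, the $\sigma$-closure of $L$ equals its double annihilator $(L^\perp)^\perp=\wh{X/H}$. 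Thus $L$ is a countable dense subgroup.

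For (iii)$\Rightarrow$(i), I first use $[X:H]=\infty$ to force $\wh{X/H}$ to be infinite: were $\wh{X/H}$ finite of cardinality $k$, the MAP hypothesis would produce an injective continuous homomorphism $X/H\hookrightarrow\T^k$, and the closure in $\T^k$ would be a compact abelian group with the same finite dual $\wh{X/H}$, hence finite, so that $X/H$ would be finite, contradicting infinite index. Using separability, pick a countable dense subgroup $L\leq\wh{X/H}$ (necessarily infinite) and enumerate it as an injective sequence $\vs=(v_n)\in\wh{X/H}^\N$; injectivity makes $\vs$ finitely-many-to-one. I claim $s_\vs(X/H)=\n_\vs(X/H)=\{0\}$. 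The equality $\n_\vs(X/H)=\{0\}$ is the annihilator computation above. For the other equality, let $x\in X/H\setminus\{0\}$; by MAP pick $\chi\in\wh{X/H}$ with $\chi(x)=a\neq 0$, and set $U=\{u\in\wh{X/H}:|u(x)-a|<|a|/2\}$, a non-empty $\sigma$-open neighborhood of $\chi$. Since $(\wh{X/H},\sigma)$ is an infinite Hausdorff precompact group it has no isolated points, so $U$ is infinite, and by density of $L$ so is $L\cap U$; as $\vs$ bijects $\N$ with $L$, the set $\{n:v_n\in U\}$ is infinite, and for each such $n$ one has $|v_n(x)|>|a|/2$, so $v_n(x)\not\to 0$. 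Pulling $\vs$ back via $\pi$, the sequence $\vs^*=(v_n\circ\pi)\in\wh X^\N$ is still finitely-many-to-one and satisfies $s_{\vs^*}(X)=\pi^{-1}(s_\vs(X/H))=H=\pi^{-1}(\n_\vs(X/H))=\n_{\vs^*}(X)$, so $H\in\CH_K(X)\cap\CH_N(X)$.

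The principal obstacle is the construction step in (iii)$\Rightarrow$(i): producing a \emph{single} sequence that is finitely-many-to-one and realises $H$ simultaneously as a $K$- and as an $N$-characterized subgroup. The unifying observation is that an injective enumeration of a countable $\sigma$-dense subgroup $L\leq\wh{X/H}$ automatically does both jobs---density together with infiniteness of the ambient precompact group guarantees that every basic $\sigma$-neighborhood meets $L$ in an infinite set, which in combination with MAP prevents $v_n(x)\to 0$ for $x\neq 0$, while density alone supplies the annihilator equality. The infinite-index hypothesis enters exactly once, to ensure that $\wh{X/H}$ is infinite so that an injective enumeration of $L$ exists; this is precisely what fails for finite-index subgroups, in line with Lemma \ref{CKnotOpen}.
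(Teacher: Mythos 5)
Your proposal is correct and follows the same skeleton as the paper's proof: (i)$\Rightarrow$(ii) trivially, (ii)$\Rightarrow$(iii) by factoring the characters through $\pi:X\to X/H$ and a density-via-annihilators argument, and (iii)$\Rightarrow$(i) by reducing to showing that an injective enumeration $\vs$ of a countable dense subgroup of $(\wh{X/H},\sigma(\wh{X/H},X/H))$ satisfies $s_\vs(X/H)=\n_\vs(X/H)=\{0\}$. The one genuinely different ingredient is how you verify $s_\vs(X/H)=\{0\}$. The paper argues elementarily and algebraically: for $y\neq 0$ the set $N_y=\{v_n(y)\}$ is a non-trivial subgroup of $\T$, and one splits into the case $N_y$ infinite (hence dense in $\T$, so infinitely many values escape $\T_+$) and the case $N_y$ finite (where the kernel of the evaluation homomorphism $d\mapsto d(y)$ has finite index in the countably infinite dense subgroup, so some non-zero value $a\notin\T_+$ is attained infinitely often). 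You instead argue topologically: an infinite precompact Hausdorff group has no isolated points, so the basic $\sigma$-neighborhood $\{u:\lb u(x)-a\rb<\lb a\rb/2\}$ of a character witnessing $\chi(x)=a\neq 0$ is infinite and therefore meets the dense subgroup in an infinite set, forcing $v_n(x)$ to stay bounded away from $0$ along an infinite set of indices. Your route is shorter and avoids the case distinction at the cost of invoking the homogeneity/no-isolated-points fact and the Comfort--Ross identification of the dual of $(\wh{X/H},\sigma)$ with $X/H$; the paper's route is more self-contained. You also spell out explicitly (via the closure in $\T^k$) why $\wh{X/H}$ must be infinite when $[X:H]$ is infinite and $X/H$ is MAP, a point the paper asserts without comment. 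Both arguments are sound.
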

\begin{proof} 
(i)$\Rightarrow$(ii) is obvious.

(ii)$\Rightarrow$(iii) Since $H$ is $N$-characterized, then $H$ is dually closed by Lemma \ref{Rnv}(ii), that is, $X/H$ is MAP.
Let $H=\n_\vs(X)$ for some $\vs\in\wh X^\N$ and let $\pi:X\to X/H$ be the canonical projection. 
Since $\ker \pi = H \leq \ker v_n$ for every $n\in \N$, one can factorize $v_n: X \to \T$ through $\pi$, i.e., 
write $v_n =  \bar v_n\circ \pi$ for appropriate $\bar v_n\in \wh{X/H}$. It remains to verify that $D=\{\bar v_n:n\in\N\}$ is dense in $(\wh{X/H},\sigma(\wh{X/H},X/H))$. To this end, let $\bar y=\pi(y)\in X/H$; if $\xi_{\bar y}(D)=\{0\}$, then $\bar v_n(\bar y)=v_n(y)=0$ and so $y\in H$, that is, $\bar y=0$.

(iii)$\Rightarrow$(i) Let $Y = X/H$ equipped with the quotient topology. By hypotheses, $Y$ is infinite and MAP, while $\widehat Y$ 
 is an infinite topological abelian group with a  countably infinite dense subgroup $D$. According to Proposition \ref{PcntrimgChar} applied to the canonical projection $\pi:X \to Y$, it suffices to prove that $\{0\}$ is a $K$-characterized subgroup of $Y$. 
Let $D=\{v_n: n\in \N\}$ be a one-to-one enumeration of $D$ and $\vs=(v_n)$. 
To prove that $\sv(Y) = \{0\}$, we have to show that for every non-zero $y\in Y$ there exists a neighborhood $U$ of $0$ in $\T$ such that $v_n(y)\not \in U$ for infinitely many $n\in \N$. Actually, we show that $U = \T_+$ works for all non-zero $y\in Y$. In fact, for every $y\in Y\setminus\{0\}$ one has that $N_y:=\{d(y):d\in D\}=\{v_n(y): n\in \N\}$ is a non-trivial subgroup of $\T$, as $Y$ is MAP and $y\ne 0$.  

Let $y\in Y\setminus\{0\}$. If $N_y$ is infinite, then $N_y$ is dense in $\T$, so $N_y\setminus U$ is infinite and we are done. Now consider the case when $N_y$ is finite.
As $N_y \ne \{0\}$ and $U$ contains no non-trivial subgroups of $\T$, there exists $a\in N_y$ such that $a\not \in U$. Then the map $f_y:D \to \T$ defined by $f_y(d) = d(y)$ is a homomorphism with $f_y(D) =N_y$ finite. Therefore, $K:= \ker f_y$ is a finite-index subgroup of $D$, so $K$ is infinite. Let $a = v_m(y)$ for some $m \in \N$. Then $v_m + K = \{d\in D: d(y) = a\}$ is infinite as well. This means that $v_n(y)=a\not \in U$ for infinitely many $n$ (namely, those $n$ for which $v_n \in v_m + K $). Therefore, $v_n(y) \not \to 0$ and so $y \not \in s_\vs(Y)$. 

Finally, let us note that the above argument shows also that $H$ is $N$-characterized as obviously $H\leq\n_\vs(X)$.
\end{proof}

The following is an obvious consequence of Theorem \ref{T1122}.

\begin{cor}\label{N->K}
Let $X$ be a topological abelian group and $H$ a closed subgroup of $X$ of infinite index. If $H\in\CH_N(X)$, then $H\in\CH_K(X)$.
\end{cor}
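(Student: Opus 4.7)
The plan is to derive this corollary immediately from Theorem \ref{T1122}. The hypotheses on $X$ and $H$ (topological abelian group, closed subgroup of infinite index) are exactly those required to invoke that theorem. The assumption $H \in \CH_N(X)$ is condition (ii) of the theorem, and from the established equivalence (ii)$\Leftrightarrow$(i) we obtain $H \in \CH_K(X) \cap \CH_N(X)$, from which $H \in \CH_K(X)$ follows at once.

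There is no genuine obstacle here: all of the substantive work has already been carried out in the proof of Theorem \ref{T1122}. I would simply remind the reader how that proof provides the desired finitely many-to-one sequence. Namely, starting from $\vs \in \wh X^\N$ with $H = \n_\vs(X)$, the implication (ii)$\Rightarrow$(iii) factors each $v_n$ through the canonical projection $\pi\colon X \to X/H$ to a character $\bar v_n\in\wh{X/H}$ and shows that $D=\{\bar v_n:n\in\N\}$ separates points of $X/H$, hence is dense in $(\wh{X/H},\sigma(\wh{X/H},X/H))$. The implication (iii)$\Rightarrow$(i) then uses the MAP hypothesis on $X/H$ together with the fact that $\T_+$ contains no non-trivial subgroup of $\T$ to show that any one-to-one enumeration of a countable infinite dense subgroup of $\wh{X/H}$ gives a finitely many-to-one witnessing sequence for $\{0\}\in\CH_K(X/H)$; pulling this back through $\pi$ via Proposition \ref{PcntrimgChar} yields $H\in\CH_K(X)$.

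Accordingly, the proof of the corollary reduces to a one-line citation: ``Apply the equivalence (i)$\Leftrightarrow$(ii) of Theorem \ref{T1122}.''
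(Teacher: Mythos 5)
Your proposal is correct and matches the paper exactly: the paper also presents this corollary as an immediate consequence of the equivalence (i)$\Leftrightarrow$(ii) in Theorem \ref{T1122}. The additional recap of how that theorem's proof produces the finitely many-to-one sequence is accurate but not needed for the corollary itself.
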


Next we rewrite Theorem \ref{T1122} in the case of locally compact abelian groups.

\begin{cor}\label{T12}
Let $X$ be a locally compact abelian group and $H$ a subgroup of $X$. Then $H\in\CH_N(X)$ if and only if $H$ is closed and $\wh{X/H}$ is separable.
\end{cor}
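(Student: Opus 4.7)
The plan is to reduce the statement to Theorem \ref{T1122}, separating the cases of finite and infinite index of $H$. First, observe that any $N$-characterized subgroup is automatically closed by Lemma \ref{Rnv}(iv), so the closedness of $H$ is necessary for the forward implication; hence in both directions we may assume $H$ is closed.

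Suppose first that $H$ has finite index in $X$. Since $H$ is closed, $X/H$ is a finite Hausdorff (so discrete) group, and its dual $\wh{X/H}$ is a finite group, in particular separable; so the right-hand side of the biconditional holds automatically. On the other hand, one can choose a finite family $\chi_1,\dots,\chi_k\in\wh{X/H}$ separating the points of $X/H$, lift each $\chi_i$ through the quotient map $\pi:X\to X/H$ to a character $v_i=\chi_i\circ \pi\in\wh X$, and note that $\bigcap_{i=1}^k\ker v_i=\pi^{-1}(0)=H$; taking any $\vs\in\wh X^\N$ with $\Gamma_\vs=\{v_1,\dots,v_k\}$ yields $H=\n_\vs(X)$, so $H\in\CH_N(X)$. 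Hence both sides are true in the finite-index case, and the equivalence is trivially established.

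Next suppose that $H$ has infinite index. This is precisely the situation in which Theorem \ref{T1122} applies: $H\in\CH_N(X)$ is equivalent to $X/H$ being MAP together with $(\wh{X/H},\sigma(\wh{X/H},X/H))$ being separable. Since $X$ is locally compact abelian and $H$ is closed, Fact \ref{lcafact}(i) gives that $X/H$ is MAP automatically, so that clause of (iii) is redundant. It remains to identify the separability condition with plain separability of $\wh{X/H}$. Because $X/H$ is a locally compact abelian group, hence reflexive by Pontryagin--van Kampen, Fact \ref{sigma=+} yields that $\sigma(\wh{X/H},X/H)$ coincides with the Bohr topology of $\wh{X/H}$. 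Finally, since $\wh{X/H}$ is itself a locally compact abelian group, Fact \ref{lcafact}(iv) gives that $\wh{X/H}$ is separable if and only if $\wh{X/H}^+$ is separable. Combining these two identifications, $(\wh{X/H},\sigma(\wh{X/H},X/H))$ is separable if and only if $\wh{X/H}$ is separable, which closes the loop.

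There is no real obstacle; the work is bookkeeping through three facts (dual closedness in the LCA setting, reflexivity identifying $\sigma$ with the Bohr topology, and Bohr-vs-native separability for LCA groups), plus a routine handling of the finite-index case that falls outside the scope of Theorem \ref{T1122}. The only mild care required is to notice that Theorem \ref{T1122} is stated under the hypothesis of infinite index, so that the finite-index branch must be dealt with by a direct construction rather than by invoking the theorem.
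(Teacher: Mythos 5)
Your proof is correct and follows essentially the same route as the paper's: both reduce to Theorem \ref{T1122} in the infinite-index case via Facts \ref{sigma=+} and \ref{lcafact}, and both dispose of the finite-index case by noting that $H$ is then a finite intersection of kernels of characters while $\wh{X/H}$ is finite, hence separable.
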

\begin{proof} 
As both conditions imply that $H$ is closed, we assume without loss of generality that $H$ is closed.
Since $X/H$ and $\wh{X/H}$ are locally compact abelian groups, $X/H$ is MAP and the Bohr topology on $\wh{X/H}$ coincides with $\sigma(\wh{X/H},X/H)$ by Fact \ref{sigma=+}, so the separability of $\wh{X/H}$ is equivalent to the separability of $\wh{X/H}^+$ by Fact \ref{lcafact}. If $H$ has infinite index in $X$, apply Theorem \ref{T1122} to conclude. If $H$ has finite index in $X$, then the equivalence is trivially satisfied; indeed, $H$ is a finite intersection of kernels of characters, so it is $N$-characterized, and $\wh{X/H}$ is finite, so separable.
\end{proof}

As a consequence of Theorem \ref{T1122} we find a sufficient condition for an open subgroup of infinite  index to be $K$-characterized:

\begin{teo}\label{Kcar=car}
Let $X$ be a topological abelian group and $H$ an open subgroup of $X$ of infinite index. Then the following conditions are equivalent:
\begin{itemize}
\item[(i)] $H\in\CH(X)$;
\item[(ii)] $H\in\CH_K(X)$;
\item[(iii)] $[X:H]\leq\cc$;
\item[(iv)] $\wh{X/H}$ is separable.
\end{itemize}
\end{teo}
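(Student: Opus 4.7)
\bigskip
\noindent\textbf{Proof plan for Theorem \ref{Kcar=car}.}

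The plan is to close the four-way equivalence by a short cycle, exploiting the fact that when $H$ is open, $Y := X/H$ is discrete and so $\wh Y$ is compact; this will allow me to reduce the substantial implication to Theorem \ref{T1122}. The trivial implication (ii)$\Rightarrow$(i) needs no comment, and (i)$\Rightarrow$(iii) is immediate from Corollary \ref{<c}, which bounds the index of any characterized subgroup by $\cc$.

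For (iii)$\Leftrightarrow$(iv) I would argue as follows. Since $H$ is open, $Y = X/H$ is a discrete abelian group, so $\wh Y$ is a compact abelian group whose weight equals $|Y| = [X:H]$. Applying Lemma \ref{HMP} to $\wh Y$, one gets $\wh Y$ separable iff $w(\wh Y) = [X:H] \leq \cc$, which gives the equivalence of (iii) and (iv).

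The core step is (iv)$\Rightarrow$(ii), which I plan to deduce directly from Theorem \ref{T1122}; I have to verify its conditions (iii), namely that $Y$ is MAP and that $(\wh Y, \sigma(\wh Y, Y))$ is separable. The first is automatic since $Y$ is discrete abelian, hence trivially MAP. For the second, observe that $\wh Y$ is compact and reflexive, so by Fact \ref{sigma=+} the weak topology $\sigma(\wh Y, Y)$ coincides with the Bohr topology of $\wh Y$; since $\wh Y$ is compact, its Bohr topology coincides with its original compact topology, and separability of the latter is exactly condition (iv). Once the conditions of Theorem \ref{T1122} are in place, the fact that $[X:H]$ is infinite (equivalent, by our open hypothesis, to saying $H$ has infinite index in $X$) lets us conclude $H \in \CH_K(X) \cap \CH_N(X)$, in particular $H \in \CH_K(X)$, which is (ii).

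The main point that requires attention — though it is not really an obstacle once one unpacks the definitions — is the identification of $\sigma(\wh Y, Y)$ with the original compact topology on $\wh Y$; this rests on reflexivity of $\wh Y$ (which gives $\sigma(\wh Y, Y) = $ Bohr topology by Fact \ref{sigma=+}) together with the fact that the Bohr topology of a compact abelian group coincides with its compact topology. Everything else is a bookkeeping exercise chaining the earlier results.
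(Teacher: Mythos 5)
Your proposal is correct and follows essentially the same route as the paper: (ii)$\Rightarrow$(i) trivially, (i)$\Rightarrow$(iii) by Corollary \ref{<c}, (iii)$\Rightarrow$(iv) via Lemma \ref{HMP} applied to the compact group $\wh{X/H}$, and (iv)$\Rightarrow$(ii) by Theorem \ref{T1122}. The only difference is that you spell out the verification of the hypotheses of Theorem \ref{T1122} (discreteness of $X/H$, reflexivity, and the identification of $\sigma(\wh{X/H},X/H)$ with the compact topology), which the paper leaves implicit; this is a welcome clarification rather than a deviation.
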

\begin{proof}
(ii)$\Rightarrow$(i) is clear and (i)$\Rightarrow$(iii) is Corollary \ref{<c}.

(iii)$\Rightarrow$(iv) Since $\wh{X/H}$ is a compact abelian group of weight at most $\cc$, it is separable by Lemma \ref{HMP}.

(iv)$\Rightarrow$(ii) As $[X:H]$ is infinite, we can apply Theorem \ref{T1122} to conclude that $H$ is $K$-characterized.
\end{proof}

The following is another direct consequence of Theorem \ref{T1122}.

\begin{cor}\label{CcomKchar}
If $X$ is a metrizable compact abelian group, then every closed non-open subgroup of $X$ is $K$-characterized.   
\end{cor}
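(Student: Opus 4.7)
The plan is to deduce this corollary directly from Theorem \ref{T1122}. The first step is to note that a closed non-open subgroup $H$ of a topological group must have infinite index: otherwise, $X$ would be a finite disjoint union of closed cosets of $H$, forcing $H$ itself (the complement of the union of the remaining cosets) to be open. So $H$ has infinite index in $X$, and Theorem \ref{T1122} applies; it suffices to verify condition (iii) of that theorem for the pair $(X,H)$.

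Since $X$ is compact abelian and $H$ is closed, the quotient $X/H$ is again a compact abelian group, hence MAP by Fact \ref{lcafact}(ii). For the remaining part of condition (iii), I would observe that compact abelian groups are reflexive (Pontryagin--van Kampen), so Fact \ref{sigma=+} identifies $\sigma(\wh{X/H},X/H)$ with the Bohr topology of $\wh{X/H}$. It therefore suffices to show that $\wh{X/H}$ is separable.

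Because $X$ is metrizable, so is the quotient $X/H$. Metrizability of a compact abelian group $K$ is equivalent to countability of its discrete dual $\wh K$ (the weight of $K$ equals $|\wh K|$). Hence $\wh{X/H}$ is countable, and in particular separable in every group topology on it. Thus condition (iii) of Theorem \ref{T1122} is satisfied, and we conclude that $H\in\CH_K(X)$ (and, as a bonus, $H\in\CH_N(X)$ as well).

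There is no real obstacle here: the proof is a routine verification of the hypotheses of Theorem \ref{T1122}. The only mildly subtle step is the initial observation that ``closed non-open'' forces the index to be infinite, which makes Theorem \ref{T1122} available in the first place.
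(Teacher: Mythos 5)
Your argument is correct and is exactly the route the paper intends: the corollary is stated there as a direct consequence of Theorem \ref{T1122}, and you have supplied the (routine) verification — closed non-open forces infinite index, $X/H$ is compact hence MAP, and metrizability of $X/H$ makes $\wh{X/H}$ countable, hence separable in any topology (so the appeal to reflexivity and Fact \ref{sigma=+} is not even needed). Nothing is missing.
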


\subsection{When closed subgroups of finite index are $K$-characterized}

We start by giving the following useful technical lemma.

\begin{lemma}\label{technical}
Let $X$ be a topological abelian group and $H$ an open subgroup of $X$ such that $X=H+\langle x\rangle$ for some $x\in X\setminus H$. If $H$ is autocharacterized, then $H\in \CH_K(X)$.
\end{lemma}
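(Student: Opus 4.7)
The plan is to witness $H\in\CH_K(X)$ by extending a witnessing sequence for the autocharacterization of $H$. By Remark \ref{wlog11}(i), I fix a one-to-one sequence $\vs=(v_n)\in\wh H^\N$ with every $v_n\neq 0$ and $s_\vs(H)=H$. Since $H$ is open, every homomorphism $u_n:X\to\T$ extending $v_n$ is automatically continuous, and such an extension is determined by its value $t_n:=u_n(x)\in\T$, subject only to the constraint $mt_n=v_n(mx)$ when $m\in\N_+$ is the order of $x+H$ in $X/H$ (no constraint when this order is infinite). I will choose the $t_n$ so that $s_\us(X)=H$; distinctness of $\us$ follows automatically from $u_n\rs_H=v_n$ and the distinctness of $\vs$, and then one-to-one implies finitely many-to-one, giving $H\in\CH_K(X)$.

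In the case where $x+H$ has infinite order, every $y\in X$ has a unique representation $h+kx$ with $h\in H$ and $k\in\Z$, and there is no constraint on $t_n$. Fixing $\alpha\in\T$ of infinite order and setting $t_n=\alpha$ for every $n$, I get $u_n(h+kx)=v_n(h)+k\alpha\to k\alpha$, which is $0$ in $\T$ iff $k=0$. Combined with $u_n\rs_H=v_n$, this yields $s_\us(X)=H$.

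In the case where $m:=\mathrm{ord}(x+H)<\infty$, set $h_0:=mx\in H$ and $a_n:=v_n(h_0)\in\T$, so $a_n\to 0$. I must choose $t_n$ among the $m$ solutions of $mt=a_n$ in $\T$. Lifting $a_n$ to $\tilde a_n\in\R$ with $\tilde a_n\to 0$ and setting $t_n=(1+\tilde a_n)/m\pmod 1$, I ensure $t_n\to 1/m$ in $\T$. Then for the unique representation $y=h+kx$ with $h\in H$ and $k\in\{0,\dots,m-1\}$, I compute $u_n(y)=v_n(h)+kt_n\to k/m$, which is nonzero in $\T$ for $k\in\{1,\dots,m-1\}$. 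Hence $y\in s_\us(X)$ iff $k=0$ iff $y\in H$.

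The main obstacle is the finite-index case, where the extension constraint $mt_n=a_n$ leaves only $m$ possible choices of $t_n$ and one must make a coherent selection so that $kt_n$ stays away from $0$ in $\T$ for every nonzero residue $k\bmod m$. The resolution is that among the $m$ lifts of $a_n/m$, exactly one lies near each $j/m$; picking the lift near $1/m$ works because $k/m\neq 0$ in $\T$ for all $1\leq k\leq m-1$ — no coprimality hypothesis is needed, since what forces $u_n(y)\not\to 0$ is that the limit $k/m$ itself is nonzero.
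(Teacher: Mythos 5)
Your proof is correct and follows essentially the same route as the paper's: in the finite-order case both arguments extend $v_n$ by sending $x$ to the lift of $v_n(mx)/m$ nearest to $1/m$, so that $u_n(h+kx)$ stays away from $0$ in $\T$ for every $k\not\equiv 0\pmod m$, and the infinite-order case is handled by an element of infinite order in $\T$. The only cosmetic difference is in the verification: you compute the limit $k/m\neq 0$ of $k\,u_n(x)$, whereas the paper establishes a uniform lower bound $\Vert k\,u_n(x)\Vert>\frac{1}{m^2}$ via its Claim; both are valid.
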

\begin{proof}
Let $\us\in\wh H^\N$ such that $H=s_\us(H)$. By Remark \ref{aa} we can assume that $\us$ is one-to-one and that $u_n\neq0$ for every $n\in\N$.

Assume first that $H\cap\langle x\rangle=\{0\}$. If $o(x)$ is infinite, then fix an irrational number $\alpha\in\R$ and for every $n\in\N$ let $v_n(x)=\alpha+\Z$ and $v_n(h)=u_n(h)$ for every $h\in H$. If $o(x)=k$ is finite, then for every $n\in\N$ let $v_n(x)=\frac{1}{k}+\Z$ and $v_n(h)=u_n(h)$ for every $h\in H$. In both cases, it is straightforward to prove that $H=s_\vs(X)$. Moreover, since $\us$ is one-to-one, then also $\vs$ is one-to-one. 

Suppose now that $H\cap\langle x\rangle=\langle mx\rangle$ for some $m\in\N$, with $m\geq 1$. As  $x\not \in H$, one has $m\geq 2$. 

For every $n\in\N$, let $a_n=u_n(mx)\in\T$. Since $u_n(mx)\to 0$, there exists $n_0$ such that $\Vert a_n\Vert<\frac{1}{m^2}$ for every $n\geq n_0$. As $s_{\us_{(n_0)}}(H)=H$, we shall assume for simplicity that $\Vert a_n\Vert<\frac{1}{m^2}$ every $n\in\N$. 

\begin{claim}\label{claim}
For every $a\in \T$ with $\Vert  a\Vert< \frac{1}{m^2}$, there exists $b\in \T$
such that 
\begin{equation}\label{bn}
m b=a\ \text{and}\ \Vert k b\Vert>\frac{1}{m^2}\ \text{for every}\ k\in\N,\ 1\leq k< m.
\end{equation}
\end{claim}
\begin{proof}
We tackle the problem in $\R$, that is, identifying $\T$ with $[0,1)$. 
First assume that $0\leq a<\frac{1}{m^2}$ and let 
$$b=\frac{a}{m}+\frac{1}{m}.$$
Then $m b=a+1\equiv_\Z a$ and $\frac{1}{m}\leq b\leq \frac{2}{m}$. 
Let now $k\in\N$ with $1\leq k\leq m-1$, then 
\begin{equation}\label{ac}
\frac{1}{m^2} < \frac{k}{m}\leq k b =k \frac{a}{m}+\frac{k}{m}< \frac{m-1}{m^2}+\frac{m-1}{m}=1-\frac{1}{m^2}.
\end{equation}
Therefore, $\Vert k b\Vert>\frac{1}{m^2}$. This establishes \eqref{bn} in the current case. 

It remains to consider the case $\frac{m^2-1}{m^2}< a <1$. Let $a^* = 1-a$, i.e., $a^* = -a$ in $\T$. Then obviously $\Vert  a^* \Vert < \frac{1}{m^2}$ and $0\leq a^*<\frac{1}{m^2}$. Hence, by the above case applied to $a^*$, there exists $b^*\in \T$ satisfying \eqref{bn} with $-a$ in place of $a$ (i.e., $mb^* = -a$). Let $b = -b^* \in \T$. Then \eqref{bn} holds true for $b$ and $a$, as $ \Vert k (-b)\Vert =  \Vert k b\Vert$ for every $k\in\N$ with $1\leq k<m$. 
\end{proof}

For every $n\in\N$, apply Claim \ref{claim} to $a_n$ to get $b_n$ as in \eqref{bn}, then define $v_n:X\to \T$ by letting $v_n(x)=b_n$ for every $n\in\N$ and $v_n(h)=u_n(h)$ for every $h\in H$. As $u_n(mx) = v_n(mx) = mv_n(x) = mb_n = a_n$, this definition is correct. Moreover, since $H$ is open in $X$, $v_n\in\wh X$. Since $\us$ is one-to-one, then also $\vs=(v_n)$ is one-to-one.

We show that 
\begin{equation}\label{iffeq}
v_n(kx)\to0\ \text{for}\ k\in\N\ \text{if and only if}\ k\in m\Z.
\end{equation}
In fact, if $k=k'm$ for some $k'\in\N$, $$v_n(kx)=k'v_n(mx)=k'a_n\to0.$$ Viceversa, assume that $k=k'm+r$, where $k'\in\N$ and $1\leq r\leq m-1$. Then $$v_n(kx)=k'v_n(mx)+rv_n(x)=k'a_n+rb_n\not\to0,$$ since $k'a_n\to 0$ and $\Vert rb_n\Vert\geq \frac{1}{m^2}$ by \eqref{bn}. 

We deduce finally that $H=s_\vs(X)$.  Indeed, $H=s_\us(H)\leq s_\vs(X)$, so it remains to prove that $s_\vs(X)\leq H$. 
To this end, let $y\in X\setminus H$, that is, $y=h+kx$ for some $h\in H$ and $k\in\N$ with $1\leq k<m$. Then $$v_n(y)=v_n(h+kx)=u_n(h)+v_n(kx).$$
Since $h\in H=s_\us(H)$, that is, $u_n(h)\to 0$, while $v_n(kx)\not\to0$ by \eqref{iffeq}, we conclude that $v_n(y)\not\to0$, that is, $y\not\in s_\vs(X)$.
Hence, $H=s_\vs(X)$.
\end{proof}

Every open finite-index subgroup is a finite intersection of kernels of characters, so it is $N$-characterized. In the next theorem we describe when a proper open finite-index subgroup is $K$-characterized. 

\begin{teo}\label{PtimesKchar:new}
Let $X$ be a topological abelian group and $H$ a proper open subgroup of $X$ of finite index. Then $H\in \CH_K(X)$ if and only if $H$ is autocharacterized. 
\end{teo}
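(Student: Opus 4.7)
For the forward direction, I will assume $H = s_\vs(X)$ with $\vs\in\wh X^\N$ finitely many-to-one, and restrict to $\us = (v_n\rs_H)\in\wh H^\N$. By Lemma~\ref{Lbasic}(i) one has $s_\us(H) = s_\vs(X)\cap H = H\cap H = H$, so it suffices to prove that $\us$ is non-trivial. Suppose for contradiction that $\us$ is eventually null; then $v_n\in H^\perp$ for all $n$ beyond some $n_0$. By Lemma~\ref{perp}(i), $H^\perp$ is algebraically isomorphic to $\wh{X/H}$, which is finite since $X/H$ is. Hence the infinitely many terms $(v_n)_{n\ge n_0}$ lie in a finite subset of $\wh X$, and by pigeonhole some character must appear infinitely often, contradicting $\Gi=\emptyset$. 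Thus $\us$ is non-trivial and $H$ is autocharacterized.

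For the converse, the strategy is to reduce to the one-generator extension handled by Lemma~\ref{technical} by filtering $X$ between $H$ and $X$ using finitely many generators of $X/H$. Concretely, since $X/H$ is a finite abelian group, I can choose elements $x_1,\dots,x_k\in X$ and set $H_0=H$, $H_{i+1}=H_i+\langle x_{i+1}\rangle$ with $x_{i+1}\in H_{i+1}\setminus H_i$, arriving at $H_k=X$ in finitely many steps. Each $H_i$ is open in $X$ (as it contains the open subgroup $H$), hence open in $H_{i+1}$. A key observation I will use repeatedly is that an open subgroup is automatically dually embedded: any $\chi\in\wh{H_i}$ extends algebraically to $H_{i+1}$ by divisibility of $\T$, and the extension is continuous at $0$ because $H_i$ is a neighborhood of $0$ on which it agrees with the continuous $\chi$.

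I then prove $H\in\CH_K(H_i)$ by induction on $i$. The base case $i=0$ is immediate from Remark~\ref{wlog11}(ii), which says an autocharacterized group is a $K$-characterized subgroup of itself. For the inductive step, assume $H\in\CH_K(H_i)$. Since $H$ is an open finite-index subgroup of $H_i$ and $H$ is autocharacterized, Lemma~\ref{directsummand}(ii)(b) promotes $H_i$ to an autocharacterized group. Now apply Lemma~\ref{technical} inside $H_{i+1}=H_i+\langle x_{i+1}\rangle$ to obtain $H_i\in\CH_K(H_{i+1})$. Combining $H\in\CH_K(H_i)$, $H_i\in\CH_K(H_{i+1})$, and the dual embeddedness of $H_i$ in $H_{i+1}$, the transitivity statement Corollary~\ref{NEW:lemmaK}(i) yields $H\in\CH_K(H_{i+1})$, closing the induction. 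Taking $i=k$ gives $H\in\CH_K(X)$.

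I expect no serious obstacle: the forward direction rests on the simple pigeonhole argument enabled by finiteness of $[X:H]$, and the backward direction is essentially a clean bookkeeping of Lemma~\ref{technical} iterated through a chain, glued together by the transitivity of $K$-characterization and the free availability of dual embeddedness for open subgroups. The subtlest moment is recognizing that, in the inductive step, Lemma~\ref{directsummand}(ii)(b) is precisely what allows Lemma~\ref{technical} to be applied at each stage, because after the first step it is $H_i$ (not $H$) that must be autocharacterized to feed into the lemma.
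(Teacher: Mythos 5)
Your proposal is correct and follows essentially the same route as the paper: the forward direction uses the finiteness of $H^\perp\cong\wh{X/H}$ to see that the restricted sequence cannot be eventually null (the paper phrases this as the restriction map $\wh X\to\wh H$ being finitely many-to-one, you phrase it as a pigeonhole argument — same content), and the converse builds the same chain $H=X_0<X_1<\dots<X_n=X$ with one generator added at each step, applying Lemma \ref{directsummand}(ii)(b) to keep each stage autocharacterized, Lemma \ref{technical} for the one-step extensions, and Corollary \ref{NEW:lemmaK}(i) for transitivity. Your explicit justification that open subgroups are dually embedded (needed for Corollary \ref{NEW:lemmaK}(i)) is a detail the paper leaves implicit, and is a welcome addition.
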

\begin{proof} 
Assume that $H\in \CH_K(X)$. We can write $H =\su(X)$ for $\us\in\wh X^\N$ one-to-one.  Let $v_n = u_n \restriction_H$ for every $n\in\N$. Then the map $u_n\mapsto v_n$ is  finitely many-to-one, as $X/H$ is finite. Therefore, $\vs=(v_n)$ is finitely many-to-one. Obviously, $H =\sv(H)$, so $H$ is autocharacterized. 

Now assume that $H$ is autocharacterized. Since $H$ has finite index in $X$, there exist $x_1,\ldots, x_n\in X$ such that $X=H+\langle x_1,\ldots,x_n\rangle$ and that,  letting $X_i :=H+\langle x_1, \ldots, x_i \rangle$ for $i = 1,\ldots, n$ and $X_0:=H$, the subgroup $X_{i-1}$ is a proper subgroup of $X_i$  for $i = 1,\ldots, n$.  We shall prove by induction on $i= 1,\ldots, n$, that 
\begin{equation}\label{Laast:eq}
H \in\CH_K(X_{i}) .
\end{equation}
As $X = X_n$, this will give $H\in \CH_K(X)$, as desired. 

Before starting the induction, we note that according to Lemma \ref{lemma:Aug2}(ii), all subgroups $X_{i}$,  for $i = 1,\ldots, n$, are autocharacterized, as each $X_{i-1}$ is open in $X_i$. 
For $i=1$, the assertion in \eqref{Laast:eq} follows from Lemma \ref{technical}. Assume that $1< i \leq n$ and \eqref{Laast:eq} holds true for $i -1$, i.e., $H\in \CH_K(X_{i-1})$. Since  $X_{i-1}$ is open in $X_i$, again Lemma \ref{technical} applied to $X_{i} = X_{i-1} + \langle  x_i \rangle$ gives that $X_{i-1}\in\CH_K(X_{i})$. As $H\in \CH_K(X_{i-1})$ by our inductive hypothesis, we conclude with  Corollary \ref{NEW:lemmaK}(i) that $H \in\CH_K(X_i)$. 
\end{proof}

\subsection{Further results on $K$-characterized subgroups}

The next corollary resolves an open question from \cite{Impi}:

\begin{cor}\label{Kcar=card}
Let $X$ be an infinite discrete abelian group and $H$ a subgroup of $X$. The following conditions are equivalent:
\begin{itemize}
\item[(i)] $H\in\CH(X)$;
\item[(ii)] $H\in\CH_K(X)$; 
\item[(iii)] $[X:H]\leq\cc$.
\end{itemize}
\end{cor}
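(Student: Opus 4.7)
The plan is to prove the circular chain $(ii)\Rightarrow(i)\Rightarrow(iii)\Rightarrow(ii)$. The first implication is immediate from the inclusion $\CH_K(X)\subseteq\CH(X)$, while $(i)\Rightarrow(iii)$ is exactly Corollary \ref{<c} applied to the topological abelian group $X$. The whole substance of the corollary therefore lies in proving $(iii)\Rightarrow(ii)$, and the key observation that makes this possible is that since $X$ carries the discrete topology, \emph{every} subgroup $H$ of $X$ is open in $X$. This lets me invoke the results of this section on open subgroups directly, and I would split the argument into two cases according to whether $[X:H]$ is infinite or finite.

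If $[X:H]$ is infinite, then $H$ is an open subgroup of infinite index with $[X:H]\leq\cc$, so Theorem \ref{Kcar=car} immediately yields $H\in\CH_K(X)$.

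If $[X:H]$ is finite, then $H$ itself is infinite (as $X$ is infinite) and discrete, hence a non-compact locally compact abelian group. By Theorem \ref{compactnonac}, $H$ is autocharacterized. If moreover $H$ is a proper subgroup of $X$, then Theorem \ref{PtimesKchar:new} gives $H\in\CH_K(X)$. The remaining borderline case $H=X$ is not directly covered by Theorem \ref{PtimesKchar:new} (which is stated only for proper open subgroups), but it is handled separately: since $X$ itself is infinite discrete and hence autocharacterized, Remark \ref{wlog11}(i) supplies a one-to-one sequence $\us\in\wh{X}^\N$ with $u_n\neq 0$ for every $n\in\N$ such that $X=s_\us(X)$, and such a $\us$ is in particular finitely many-to-one, so $X\in\CH_K(X)$.

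I do not anticipate any genuine obstacle: all three tools needed (discreteness forcing openness, Theorem \ref{compactnonac} supplying autocharacterization for every infinite discrete abelian group, and the dichotomy of Theorems \ref{Kcar=car} and \ref{PtimesKchar:new} describing when open subgroups are $K$-characterized) are already in place, so the argument reduces to assembling them in the right order. The only mild subtlety worth noting is the trivial case $H=X$, which must be disposed of by hand because Theorem \ref{PtimesKchar:new} is explicitly formulated for \emph{proper} open subgroups of finite index.
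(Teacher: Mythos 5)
Your proof is correct and follows essentially the same route as the paper: both reduce $(iii)\Rightarrow(ii)$ to the dichotomy on $[X:H]$, using Theorem \ref{Kcar=car} for infinite index and autocharacterization via Theorem \ref{compactnonac} together with Theorem \ref{PtimesKchar:new} for finite index. You are in fact slightly more careful than the paper's own proof, which omits the borderline case $H=X$ (not covered by Theorem \ref{PtimesKchar:new}, stated only for proper subgroups) that you correctly dispose of via Remark \ref{wlog11}.
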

\begin{proof}
(ii)$\Rightarrow$(i) is clear and (i)$\Rightarrow$(iii) is Corollary \ref{<c}.

(iii)$\Rightarrow$(ii) If $[X:H]$ is infinite, then $H\in\CH(X)$ by Theorem \ref{Kcar=car}. So, assume that $[X:H]$ is finite, then $H$ is infinite, and hence $H$ is autocharacterized by Theorem \ref{compactnonac}; therefore, $H\in\CH_K(X)$ by Theorem \ref{Kcar=car}.
\end{proof}

 We give now sufficient conditions for a non-closed characterized subgroup to be $K$-characterized.

\begin{teo}\label{CHK}
Let $X$ be a topological abelian group and $H\in \CH(X)$ a non-closed subgroup of $X$ such that:
\begin{itemize}
\item[(i)] $X/\overline H$ is MAP and $(\wh{X/\overline H},\sigma(\wh{X/\overline H},X/\overline H))$ is separable; 
\item[(ii)] if $1< [X:\overline{H}]<\omega$, then $\overline{H}$ is autocharacterized.
\end{itemize}
Then $H\in \CH_K(X)$. 
\end{teo}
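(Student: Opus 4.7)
My plan is to start from any characterization $H = \sv(X)$, split $\vs$ into its finitely many-to-one subsequence $\vs^0$ and its ``infinitely repeated'' part $\vs^\infty$ as in the decomposition described just before Lemma \ref{RKchar}, and then interleave $\vs^0$ with a finitely many-to-one characterization of $\overline{H}$. The launching observation is that every $\chi \in \Gi$, being continuous and forced to vanish on $H$ (the constant subsequence $(\chi,\chi,\dots)$ extracted from $\vs$ must converge to $0$ on $H$), also vanishes on $\overline{H}$; hence $\overline{H} \leq \n_{\vs^\infty}(X)$. Note that $\Gz$ is automatically infinite under the standing convention \eqref{dag}, for otherwise $\vs = \vs^\infty$ and $H = \sv(X) = \n_{\vs^\infty}(X)$ would be closed, contradicting the non-closedness hypothesis. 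Thus $\vs^0$ is a genuine infinite finitely many-to-one sequence.

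Combining $H \leq \overline{H} \leq \n_{\vs^\infty}(X)$ with Lemma \ref{RKchar}(ii) gives
\[
H \;=\; s_{\vs^0}(X) \cap \n_{\vs^\infty}(X) \;=\; s_{\vs^0}(X) \cap \overline{H}.
\]
Therefore, as soon as I produce a finitely many-to-one sequence $\us \in \wh{X}^\N$ witnessing $\overline{H} = \su(X)$, the standard interleaving from the proof of Lemma \ref{Lbasic}(iii) yields a finitely many-to-one $\ws \in \wh{X}^\N$ with $s_\ws(X) = s_{\vs^0}(X) \cap \su(X) = H$, establishing $H \in \CH_K(X)$.

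Producing this $\us$ is where the two hypotheses come in, via a trichotomy on $[X:\overline{H}]$. If $\overline{H} = X$, then $\Gi \subseteq X^\perp = \{0\}$, so actually $H = s_{\vs^0}(X)$ already and no interleaving is needed. If $[X:\overline{H}]$ is infinite, hypothesis (i) is exactly condition (iii) of Theorem \ref{T1122}, whose conclusion delivers $\overline{H} \in \CH_K(X)$. Finally, if $1 < [X:\overline{H}] < \omega$, then $\overline{H}$ is a closed subgroup of finite index, hence open, so hypothesis (ii) together with Theorem \ref{PtimesKchar:new} gives $\overline{H} \in \CH_K(X)$. The main obstacle is the bookkeeping that guarantees $\ws$ is still finitely many-to-one \emph{and} characterizes $H$ as a subgroup of $X$ rather than merely of $\overline{H}$; the initial observation $\overline{H} \leq \n_{\vs^\infty}(X)$ is what makes this work, since it allows intersecting with $\overline{H}$ to replace the contribution of $\vs^\infty$ altogether.
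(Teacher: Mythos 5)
Your proof is correct, and the second half (producing a finitely many-to-one characterization of $\overline H$ inside $X$ via the trichotomy on $[X:\overline H]$, using Theorem \ref{T1122} for infinite index and Theorem \ref{PtimesKchar:new} for finite index, where closedness of finite index indeed forces openness) coincides with the paper's argument. Where you genuinely diverge is in how you pass from $\overline H$ back to $H$. The paper first shows $H\in\CH_K(\overline H)$ by noting that $H$ is a proper dense characterized subgroup of $\overline H$, hence $TB$-characterized there by Lemma \ref{vv}(ii), and then invokes the transitivity statement Corollary \ref{NEW:lemmaK}(i); that corollary formally requires $\overline H$ to be dually embedded in $X$ so that the characters of $\overline H$ can be extended. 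You instead stay inside $\wh X$ throughout: the observation that every $\chi\in\Gi$ kills $H$ and hence, by continuity, kills $\overline H$, gives $\overline H\leq\n_{\vs^\infty}(X)$ and therefore $H=s_{\vs^0}(X)\cap\overline H$, after which a single interleaving of the two finitely many-to-one sequences $\vs^0$ and $\us$ (both already in $\wh X^\N$) finishes the proof. This buys you a self-contained argument that never needs to extend characters from $\overline H$ to $X$, at the modest cost of the extra bookkeeping around the decomposition $\vs=\vs^\infty\dot\cup\vs^0$ (your reduction also quietly covers the degenerate case $\Gi=\emptyset$, where $\vs=\vs^0$ is already finitely many-to-one and nothing needs to be done). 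Both routes are sound; yours is arguably the more robust of the two.
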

\begin{proof} 
As $H \ne \overline H$ is dense in $\overline H$ and obviously $H \in \CH(\overline H)$, we deduce that $H\in\CH_K(\overline H)$, as dense characterized subgroups are $TB$-characterized by Lemma \ref{vv}(ii). If $\overline H=X$, we are done. So assume that $\overline H$ is proper.
 
Our aim now is to apply Corollary \ref{NEW:lemmaK}, so we need to check that $\overline H \in \CH_K(X)$. 
If $\overline H$ has finite index in $X$, then $\overline H$ is autocharacterized by hypothesis and so Theorem \ref{PtimesKchar:new} yields $\overline H\in\CH_K(X)$.
If $\overline H$ is has infinite index in $X$, then $\overline H\in \CH_K(X)$ by Theorem \ref{T1122}.
\end{proof}

\begin{cor}\label{CHK:new}
Let $X$ be a divisible topological abelian group and $H\in \CH(X)$ a non-closed subgroup of $X$ such that 
$\overline H$ is dually closed. If $(\wh{X/\overline H},\sigma(\wh{X/\overline H},X/\overline H))$ is separable, then $H\in \CH_K(X)$. 
In particular, $H\in \CH_K(X)$ whenever $\wh{X/\overline H}$ is separable. 
\end{cor}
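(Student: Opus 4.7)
The plan is to deduce Corollary~\ref{CHK:new} from Theorem~\ref{CHK}, so I need to verify the two hypotheses (i) and (ii) of that theorem under the assumptions of the corollary.

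First, condition (i) splits into two parts. The MAP requirement on $X/\overline H$ is immediate from the assumption that $\overline H$ is dually closed, since by definition this means precisely that $X/\overline H$ is MAP. The separability of $(\wh{X/\overline H},\sigma(\wh{X/\overline H},X/\overline H))$ is assumed outright in the main clause of the corollary, so nothing needs to be done. For the ``in particular'' clause, I would observe that $\sigma(\wh{X/\overline H},X/\overline H)$ is coarser than the topology on $\wh{X/\overline H}$ (as noted earlier in the paper, the weak topology is coarser than the compact-open topology); hence a countable dense subset of $\wh{X/\overline H}$ remains dense in the coarser weak topology, giving the required separability.

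The key observation is that condition (ii) is vacuous under divisibility of $X$. Indeed, quotients of a divisible abelian group are divisible, so $X/\overline H$ is divisible. But any finite divisible abelian group is trivial, so we cannot have $1<[X:\overline H]<\omega$; in other words, the hypothesis of the implication in (ii) is never satisfied, and (ii) holds trivially.

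With both (i) and (ii) verified, Theorem~\ref{CHK} yields $H\in\CH_K(X)$, which is exactly what we want. The only slightly delicate point is the handling of condition (ii), where it is crucial to exploit that quotients of divisible groups are divisible together with the absence of nontrivial finite divisible abelian groups; everything else is a direct unpacking of definitions and an easy separability transfer along a coarser topology.
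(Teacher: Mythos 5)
Your proposal is correct and follows essentially the same route as the paper: deduce the statement from Theorem \ref{CHK}, noting that dual closedness of $\overline H$ gives the MAP condition, that divisibility makes hypothesis (ii) vacuous (the paper phrases this as ``divisible groups have no proper closed subgroup of finite index,'' which is exactly your observation that $X/\overline H$ is divisible and nontrivial finite divisible abelian groups do not exist), and that the ``in particular'' clause follows since $\sigma(\wh{X/\overline H},X/\overline H)$ is coarser than the compact-open topology.
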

\begin{proof} 
The first part of our hypothesis entails that $X/\overline H$ is MAP. Moreover, divisible topological abelian groups have no proper closed subgroup of finite index. Therefore, the first assertion follows directly from Theorem \ref{CHK}.  

The topology $\sigma(\wh{X/\overline H},X/\overline H)$ of the dual $\wh{X/\overline H}$ is coarser than the compact-open topology of $\wh{X/\overline H}$, so that separability of $\wh{X/\overline H}$ yields separability of $(\wh{X/\overline H},\sigma(\wh{X/\overline H},X/\overline H))$. Hence, the second assertion can be deduced from the first one. 
\end{proof}

In the case of connected locally compact abelian groups one obtains the following stronger conclusion: 

\begin{cor}
Let $X$ be a connected locally compact abelian group. Then 
$$\CH_K(X)=\begin{cases} \CH(X), \mbox{ if $X$ is not compact} \\
\CH(X)\setminus \{X\}, \mbox{ if $X$ is compact}.\end{cases}$$
\end{cor}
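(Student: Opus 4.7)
The plan is the following. The inclusion $\CH_K(X)\subseteq\CH(X)$ is trivial, so the task is to decide which characterized subgroups are $K$-characterized. By the structure theorem for connected LCA groups, $X\cong\R^n\times K$ with $K$ connected compact abelian; both factors are divisible, so $X$ is divisible, and consequently every proper closed subgroup of $X$ has infinite index. I would first treat $X$ itself: if $X$ is compact then $X$ is open in itself, so Lemma \ref{LCTW} rules out $X\in\CH_K(X)$; if $X$ is non-compact then Theorem \ref{compactnonac} yields that $X$ is autocharacterized, and Remark \ref{wlog11}(ii) then gives $X\in\CH_K(X)$. This handles the dichotomy at $X$.

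Now fix a proper $H\in\CH(X)$, write $H=s_\vs(X)$ for some $\vs\in\wh{X}^\N$ satisfying \eqref{dag}, and decompose $\vs=\vs^\infty\dot\cup\vs^0$. By Lemma \ref{RKchar}, $H=s_{\vs^0}(X)\cap\n_{\vs^\infty}(X)$ (a missing factor being interpreted as $X$). Set $H':=s_{\vs^0}(X)$ and $F:=\n_{\vs^\infty}(X)$. Since $\vs^0$ is finitely many-to-one by construction, $H'$ is either $X$ or already lies in $\CH_K(X)$. So the whole task reduces to proving that $F\in\CH_K(X)$ whenever $F$ is proper, in which case $F$ has infinite index by the first paragraph.

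For this I would apply Theorem \ref{T1122}. An enumeration of $\Gamma_{\vs^\infty}$ induces a continuous injection $X/F\hookrightarrow\T^\N$, so $\{0\}$ is $\G$ in $X/F$ and $X/F$ is metrizable. Being a quotient of the $\sigma$-compact group $X$ (every connected locally compact group is $\sigma$-compact), $X/F$ is a $\sigma$-compact metrizable LCA group, hence second countable; by Pontryagin duality, $\wh{X/F}$ is then second countable LCA, and in particular separable. Fact \ref{sigma=+} applied to the reflexive group $X/F$ identifies $\sigma(\wh{X/F},X/F)$ with the Bohr topology of $\wh{X/F}$, and by Fact \ref{lcafact}(iv) Bohr separability agrees with LCA separability, so $(\wh{X/F},\sigma(\wh{X/F},X/F))$ is separable; since $X/F$ is LCA (hence MAP), Theorem \ref{T1122} yields $F\in\CH_K(X)$. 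Interleaving finitely many-to-one sequences characterizing $H'$ and $F$ as in Lemma \ref{Lbasic}(iii) then produces a finitely many-to-one sequence characterizing $H'\cap F=H$, so $H\in\CH_K(X)$. The main obstacle is the separability step: it rests on combining metrizability of $X/F$ (from the countability of $\vs^\infty$) with $\sigma$-compactness (inherited from connectedness of $X$), so that Pontryagin duality can transfer second countability to the dual.
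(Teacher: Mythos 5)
Your proof is correct, but the middle of the argument follows a genuinely different route from the paper's. Where you split $H=s_{\vs^0}(X)\cap\n_{\vs^\infty}(X)$ via Lemma \ref{RKchar}, handle the radical $F=\n_{\vs^\infty}(X)$ directly through Theorem \ref{T1122}, and then interleave finitely many-to-one sequences, the paper instead works with the closure $\overline H$: it shows $\wh{X/\overline H}$ is separable (via Corollary \ref{T12} applied to $\n_\vs(X)\leq\overline H$ and the fact that $\wh{X/\overline H}$ embeds in the separable group $\wh{X/\n_\vs(X)}$), then applies Corollary \ref{CHK:new} when $H$ is not closed (which rests on dense characterized subgroups being $TB$-, hence $K$-characterized in $\overline H$, plus the transitivity of Corollary \ref{NEW:lemmaK}) and Theorem \ref{T1122} when $H$ is a proper closed subgroup. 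The treatment of $H=X$ is identical in both arguments. Your decomposition has the advantage of bypassing Theorem \ref{CHK} and the closure entirely; its cost is that you re-derive the separability of $\wh{X/F}$ by hand via $\sigma$-compactness of the connected group $X$, metrizability of $X/F$, and Pontryagin duality, whereas you could have simply observed that $F=\n_{\vs^\infty}(X)\in\CH_N(X)$ and quoted Corollary \ref{T12}, which gives separability of $\wh{X/F}$ for any locally compact $X$ without invoking connectedness at that point (connectedness is still needed, of course, for divisibility and hence for the infinite-index hypothesis of Theorem \ref{T1122}).
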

\begin{proof}  
The group $X$ is divisible, as connected locally compact abelian groups are divisible. 

Let $H \in \CH(X)$. Our next aim will be to check that  $\wh{X/\overline H}$ is separable. Indeed, if $H = s_\vs(X)$ for $\vs\in\wh X^\N$, then one has the chain of subgroups $\n_\vs(X) \leq H \leq \overline H \leq X$. If $\overline H=X$, then $\wh{X/\overline H}$ is trivially separable. Otherwise $\overline H$ has infinite index in $X$ since $X/\overline H$ is divisible, so $\wh{X/\n_\vs(X)}$ is separable by Corollary \ref{T12}; since $\overline H$ contains $\n_\vs(X)$, the quotient group $X/\overline H$ is a quotient group of $X/\n_\vs(X)$. Therefore, $\wh{X/\overline H}$ is isomorphic to a subgroup of the separable group $\wh{X/\n_\vs(X)}$, so $\wh{X/\overline H}$ is separable as well (see \cite{CI}). Furthermore, $\overline H$ is dually closed (so $X/\overline H$ is MAP) by Fact \ref{lcafact}.

If $H$ is not closed, Corollary \ref{CHK:new} gives that $H \in \CH_K(X)$. If $H$ is a proper closed subgroup of $X$, then $H$ has infinite index as $X/H$ is divisible, so $H \in \CH_K(X)$ by Theorem \ref{T1122}. This proves the inclusion $\CH(X)\setminus \{X\} \subseteq \CH_K(X)\setminus \{X\}$, which along with the obvious inclusion $\CH_K(X)\subseteq \CH(X)$, proves  the equality $\CH(X)\setminus \{X\} = \CH_K(X)\setminus \{X\}$. 

It remains to consider the (closed) subgroup $H=X$ which obviously belongs to $\CH(X)$. If $X$ is compact, then $X\not \in \CH_K(X)$, by Lemma \ref{CKnotOpen}, so $\CH_ K (X) = \CH(X)\setminus \{X\}$. If $X$ is not compact, then  $H = X \in \CH_K(X)$ by Theorem \ref{compactnonac} and Remark \ref{wlog11}(ii). Hence, $ \CH_K(X)=\CH(X)$ in this case.
\end{proof}

In particular, the above corollary yields $\CH_K(\T)=\CH(\T)\setminus \{\T\}$ and $\CH_K(\R)=\CH(\R)$.

\begin{remark}\label{JapJap}
As we shall see in Corollary \ref{JapJap0}, connectedness is necessary in this corollary.
\end{remark}

%

\section{$N$-characterized subgroups}\label{Nsec}

The following consequence of Lemma \ref{PnPAnChar} gives a sufficient condition for a characterized subgroup to be $N$-characterized: 

\begin{cor}
Let $X$ be a topological abelian group and $H$ a subgroup of $X$ which is not autocharacterized.
If $H\in\CH(X)$, then $H\in\CH_N(X)$.
\end{cor}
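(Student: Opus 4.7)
The plan is to derive this as a direct consequence of Lemma \ref{PnPAnChar} together with the tail-invariance of the characterized subgroup construction. Since $H \in \CH(X)$, I can fix $\vs \in \wh X^\N$ with $H = s_\vs(X)$. The non-autocharacterized hypothesis on $H$ is tailor-made to feed into Lemma \ref{PnPAnChar}: applying it with $F = H$, I get some $m \in \N$ such that
\[
H \le \n_{\vs_{(m)}}(X).
\]

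Next I would invoke two easy facts. First, replacing $\vs$ by any of its tails does not alter the characterized subgroup, i.e., $s_\vs(X) = s_{\vs_{(m)}}(X)$, because convergence $v_n(x)\to 0$ depends only on the tail of the sequence. Second, by Lemma \ref{Rnv}(i), $\n_{\vs_{(m)}}(X) \le s_{\vs_{(m)}}(X)$. Chaining these together yields
\[
H \;\le\; \n_{\vs_{(m)}}(X) \;\le\; s_{\vs_{(m)}}(X) \;=\; s_\vs(X) \;=\; H,
\]
so $H = \n_{\vs_{(m)}}(X)$. This exhibits $H$ as the $\vs_{(m)}$-radical of $X$, hence $H \in \CH_N(X)$.

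There is essentially no obstacle here: the real work is packaged in Lemma \ref{PnPAnChar}, and once that is available the argument is a two-line sandwich. The only point worth double-checking in the write-up is the harmless observation that dropping finitely many initial terms of $\vs$ leaves $s_\vs(X)$ unchanged, which is immediate from the definition of convergence in $\T$.
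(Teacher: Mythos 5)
Your proof is correct and is essentially identical to the paper's own argument: both apply Lemma \ref{PnPAnChar} to $F=H$ to get $H\le \n_{\vs_{(m)}}(X)$, and then sandwich using $\n_{\vs_{(m)}}(X)\le s_{\vs_{(m)}}(X)=s_\vs(X)=H$ to conclude $H=\n_{\vs_{(m)}}(X)$. No issues.
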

\begin{proof}
Let $H = \sv(X)$ for some $\vs\in\wh X^\N$.  Then $H\le\n_{\vs_{(m)}}(X)$ for some $m\in\N$ by Lemma \ref{PnPAnChar}. Since $H=\sv(X)=s_{\vs_{(m)}}(X) \ge \n_{\vs_{(m)}}(X)$, we deduce that $H = \n_{\vs_{(m)}}(X)$.
\end{proof}

Here comes an easy criterion establishing when a subgroup is $N$-characterized.

\begin{teo}\label{TsupeGeneral}
Let $X$ be a topological abelian group and $H$ a subgroup of $X$. The following conditions are equivalent:
\begin{itemize}
\item[(i)] $H$ is closed and there exists a continuous injection $X/H\to \T^\N$;
\item[(ii)]  $H\in\CH_N(X)$;
\item[(iii)]  $H$ is closed and $\{0\}$ is $\G$ in $(X/H)^+$.
\end{itemize}
\end{teo}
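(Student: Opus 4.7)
The plan is to prove the cyclic chain (ii) $\Rightarrow$ (i) $\Rightarrow$ (iii) $\Rightarrow$ (ii), relying on the fact that passing to the Bohr modification preserves characters, together with Lemma \ref{LinjPre} to handle the passage from a $G_\delta$ singleton to an injection into $\T^\N$.

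For (ii) $\Rightarrow$ (i), I would take $\vs = (v_n) \in \wh X^\N$ with $H = \n_\vs(X)$ and consider $\varphi : X \to \T^\N$ given by $\varphi(x) = (v_n(x))_{n \in \N}$. This is a continuous homomorphism with $\ker \varphi = \n_\vs(X) = H$, so $H$ is closed and $\varphi$ factors through $X/H$ as a continuous injective homomorphism $X/H \to \T^\N$.

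For (i) $\Rightarrow$ (iii), observe that a continuous injection $X/H \to \T^\N$ amounts to a sequence $(\bar v_n) \in \wh{X/H}^\N$ with $\bigcap_n \ker \bar v_n = \{0\}$. Each $\bar v_n$ remains continuous on $(X/H)^+$ since the Bohr modification has the same characters as $X/H$. Thus the same map realizes a continuous injection $(X/H)^+ \to \T^\N$, and since $\{0\}$ is $G_\delta$ in $\T^\N$, it is $G_\delta$ in $(X/H)^+$.

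The main step is (iii) $\Rightarrow$ (ii). Since $\{0\}$ is $G_\delta$ in $(X/H)^+$, in particular $(X/H)^+$ is $T_1$ and hence Hausdorff (being a group topology), so $X/H$ is MAP and $(X/H)^+$ is precompact. Now Lemma \ref{LinjPre} applies to $(X/H)^+$ and yields a continuous injection $j : (X/H)^+ \to \T^\N$, determined by a sequence $(u_n) \in \wh{(X/H)^+}^\N = \wh{X/H}^\N$ with $\bigcap_n \ker u_n = \{0\}$. Pulling back through the quotient map $\pi : X \to X/H$, set $v_n = u_n \circ \pi \in \wh X$ and $\vs = (v_n)$; then
\[
\n_\vs(X) = \bigcap_{n \in \N} \ker v_n = \pi^{-1}\Bigl(\bigcap_{n \in \N} \ker u_n\Bigr) = \pi^{-1}(\{0\}) = H,
\]
so $H \in \CH_N(X)$, as required. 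The only subtle point is confirming that $X/H$ is MAP in (iii), which comes for free from $\{0\}$ being $G_\delta$ (and in particular closed) in $(X/H)^+$; the rest is a straightforward application of Lemma \ref{LinjPre} and the invariance of characters under the Bohr modification.
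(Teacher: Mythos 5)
Your proof is correct and follows essentially the same route as the paper: the two substantive implications (a sequence with $H=\n_\vs(X)$ assembles into a continuous injective homomorphism $X/H\to\T^\N$, and conversely the coordinates of such an injection are characters whose common kernel pulls back to $H$) are exactly the paper's argument for (i)$\Leftrightarrow$(ii). The only difference is organizational: the paper dismisses (i)$\Leftrightarrow$(iii) as obvious, whereas you route the cycle through (iii) and justify that step carefully via Lemma \ref{LinjPre} and the invariance of characters under the Bohr modification, which is a welcome bit of added rigor rather than a different method.
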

\begin{proof}
%
(i)$\Rightarrow$(ii) Suppose that there exists a continuous injection $j:X/H\to\T^\N$. Let $\pi:X\to X/H$ be the canonical projection. For every $n\in\N$, let $p_n:\T^\N\to \T$ be $n$-th projection and let $v_n=p_n\circ j\circ \pi$. 
\begin{equation*}
\xymatrix{X/H \ar[r]^{j} & \T^\N \ar[d]_{p_n} \\
X \ar[u]^{\pi} \ar[r]_{v_n} & \T}
\end{equation*}
Therefore, $v_n\in\wh{X}^\N$ for every $n\in\N$ and $H=\n_\vs(X)$, where $\vs=(v_n)$.

(ii)$\Rightarrow$(i) Let $H=\n_\vs(X)$ for $\vs\in\wh X^\N$. Let $\pi:X\to X/\n_\vs(X)$ be the canonical projection and define $j:X/\n_\vs(X)\to \T^\N$ by $j(\pi(x))=(v_n(x))_{n\in\N}$ for every $x\in X$. Since $\n_\vs(X)=\bigcap_{n\in\N}\ker v_n$, then $j$ is well-defined and injective. Moreover, $j$ is continuous.

Finally, (i) and (iii) are obviously equivalent. 
\end{proof}

The above criterion simplifies in the case of open subgroups: 

\begin{cor}\label{Popen}
Let $X$ be a topological abelian group and $H$ an open subgroup of $X$. The following conditions are equivalent:
\begin{itemize}
\item[(i)] $H\in\CH(X)$;
\item[(ii)] $H\in\CH_N(X)$;
\item[(iii)] $[X:H]\le\cc$.
\end{itemize}		
\end{cor}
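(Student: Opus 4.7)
The implication (ii)$\Rightarrow$(i) is trivial from the inclusion $\CH_N(X)\subseteq\CH(X)$, and (i)$\Rightarrow$(iii) is Corollary \ref{<c}. The only substantive direction is (iii)$\Rightarrow$(ii), and I will route it through Theorem \ref{TsupeGeneral}: since $H$ is open, $Y:=X/H$ is discrete (so automatically closed in $X$ and MAP), hence it suffices to exhibit a continuous injection $Y\to\T^\N$. Because $Y$ is discrete, any group homomorphism out of $Y$ is continuous, so the task reduces to finding a \emph{countable} subset of $\wh Y$ that separates the points of $Y$.

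To produce such a countable separating family, I will use Lemma \ref{HMP}. Since $Y$ is discrete, the compact-open topology on $\wh Y$ coincides with the topology of pointwise convergence, and a neighborhood base at $0$ indexed by finite subsets of $Y$ gives $w(\wh Y)\leq|Y|\leq\cc$ by hypothesis. Since $\wh Y$ is compact, Lemma \ref{HMP} yields that $\wh Y$ is separable; pick a countable dense set $\{v_n:n\in\N\}\subseteq\wh Y$ and set $\vs=(v_n)$.

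It remains to check that $\vs$ separates the points of $Y$. Discrete abelian groups are MAP, so for every nonzero $y\in Y$ the set $U_y=\{\chi\in\wh Y:\chi(y)\neq 0\}$ is a nonempty open subset of $\wh Y$ (openness follows from continuity of the evaluation $\chi\mapsto\chi(y)$ on $\wh Y$). By density of $\{v_n\}$, some $v_n$ lies in $U_y$, i.e.\ $v_n(y)\neq 0$. Hence $\varphi:Y\to\T^\N$, $\varphi(y)=(v_n(y))_n$, is an injective (continuous) homomorphism, and applying Theorem \ref{TsupeGeneral}(i)$\Rightarrow$(ii) to the composition $X\to Y\hookrightarrow\T^\N$ gives $H\in\CH_N(X)$.

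The only delicate point is the verification that a countable dense subset of $\wh Y$ separates points of $Y$, which is where both the openness of $H$ (to make $Y$ discrete and thus MAP) and the bound $[X:H]\leq\cc$ (to invoke Lemma \ref{HMP}) are essential; everything else is a direct application of Theorem \ref{TsupeGeneral} and Corollary \ref{<c}.
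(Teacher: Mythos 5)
Your proof is correct, and its skeleton matches the paper's: both handle (ii)$\Rightarrow$(i) as trivial, (i)$\Rightarrow$(iii) via Corollary \ref{<c}, and reduce (iii)$\Rightarrow$(ii) to Theorem \ref{TsupeGeneral} by producing a continuous injection of the discrete quotient $X/H$ into $\T^\N$. The difference lies in how that injection is obtained. The paper simply asserts that a discrete group of cardinality at most $\cc$ admits a (necessarily continuous) injective homomorphism into $\T^\N$; the justification it has in mind is the purely algebraic one already used in the proofs of Lemma \ref{HMP} and Theorem \ref{Tinjection}, namely embedding $X/H$ into its divisible hull, which is a direct sum of at most $\cc$ countable divisible groups and hence embeds into $\T^\N$ after dualizing. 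You instead pass to the dual: $\wh{X/H}$ is a compact group of weight at most $\cc$, hence separable by Lemma \ref{HMP}, and a countable dense subset of characters separates the points of the MAP group $X/H$ by the density-plus-evaluation argument, yielding the injection. Your route is a clean duality-flavoured alternative --- it is in fact the same mechanism the paper uses in the step (iii)$\Rightarrow$(iv) of Theorem \ref{Kcar=car} --- and it has the virtue of making the unexplained existence claim fully explicit; the algebraic route is marginally more elementary in that it avoids invoking the Hewitt--Marczewski--Pondiczery theorem hiding inside Lemma \ref{HMP}. All the small verifications you flag (openness of $U_y$, MAP-ness of discrete groups, closedness of open subgroups) are correct.
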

\begin{proof}
(ii)$\Rightarrow$(i) is obvious and (i)$\Rightarrow$(iii) is Corollary \ref{<c}.

(iii)$\Rightarrow$(ii) Since $H$ is open, $X/H$ is discrete. By hypothesis $|X/H|\leq\cc$, so there exists a continuous injection $X/H\to\T^\N$. Hence,  $H\in\CH_N(X)$ by Theorem \ref{TsupeGeneral}.
\end{proof}


The next is another consequence of Theorem \ref{TsupeGeneral}.

\begin{cor}\label{TitemiPre}
Let $X$ be a metrizable precompact abelian group and $H$ a subgroup of $X$. The following conditions are equivalent:
\begin{itemize}
\item[(i)] $H$ is closed;
\item[(ii)] $H$ is closed and $H\in\CH(X)$;
\item[(iii)] $H\in\CH_N(X)$.
\end{itemize}
\end{cor}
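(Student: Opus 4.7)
The plan is to establish the cycle (iii)$\Rightarrow$(ii)$\Rightarrow$(i)$\Rightarrow$(iii), with all the real work concentrated in (i)$\Rightarrow$(iii). The implications (iii)$\Rightarrow$(ii) and (ii)$\Rightarrow$(i) are essentially free from earlier results: any $N$-characterized subgroup is characterized by Remark \ref{remarkn}(ii) and closed by Lemma \ref{Rnv}(ii), and (ii)$\Rightarrow$(i) is trivial.

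For the nontrivial implication (i)$\Rightarrow$(iii), the strategy is to reduce to Theorem \ref{TsupeGeneral} by exhibiting a continuous injection $X/H\to\T^\N$. I would first verify that $X/H$ inherits from $X$ the hypotheses of Lemma \ref{LinjPre}: namely, as $H$ is closed in the Hausdorff group $X$, the quotient $X/H$ is Hausdorff; quotients of precompact groups are precompact, so $X/H$ is precompact; and quotients of metrizable topological groups by closed subgroups are metrizable, so $X/H$ is metrizable. In particular, $\{0\}$ is $G_\delta$ in $X/H$.

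Then I would invoke Lemma \ref{LinjPre} to produce a continuous injective homomorphism $X/H\to\T^\N$. By the equivalence (i)$\Leftrightarrow$(ii) of Theorem \ref{TsupeGeneral}, this gives exactly $H\in\CH_N(X)$, closing the cycle.

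The only mild obstacle is checking the standard fact that $X/H$ is metrizable when $X$ is metrizable and $H$ is a closed subgroup; this is a classical result on topological groups (the quotient map admits a left-invariant pseudometric induced from $X$, which separates points precisely because $H$ is closed). Everything else is a direct chaining of Lemma \ref{LinjPre} and Theorem \ref{TsupeGeneral}.
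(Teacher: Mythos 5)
Your proposal is correct and follows essentially the same route as the paper: reduce (i)$\Rightarrow$(iii) to Theorem \ref{TsupeGeneral} by noting that $X/H$ is metrizable precompact, so $\{0\}$ is $\G$ there, and Lemma \ref{LinjPre} supplies the continuous injection into $\T^\N$. Your extra care in verifying that $X/H$ is Hausdorff and precompact (needed to apply Lemma \ref{LinjPre}) is a detail the paper leaves implicit, but the argument is the same.
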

\begin{proof}
(iii)$\Rightarrow$(ii) and (ii)$\Rightarrow$(i) are clear. 

(i)$\Rightarrow$(iii) Since $X$ is metrizable, $X/H$ is metrizable as well, and so $\{0\}$ is $\G$ in $X/H$. By Lemma \ref{LinjPre} there exists a continuous injective homomorphism $X/H\to\T^\N$. Therefore, $H$ is $N$-characterized by Theorem \ref{TsupeGeneral}.
\end{proof}

According to Theorem \ref{T1122}, one can add to the equivalent conditions in Corollaries \ref{Popen} and \ref{TitemiPre} also ``$H\in\CH_K(X)$", in case $[X:H]\geq \omega$. 

\medskip
In the sequel we consider the case of locally compact abelian groups. The following theorem was proved in \cite[Theorem B]{DG13} for compact abelian groups. 

\begin{teo}\label{ThB}
Let $H$ be a locally compact abelian group and $H$ a subgroup of $X$. Then $H\in\CH(X)$ if and only if $H$ contains a closed $\G$-subgroup $K$ of $X$ such that $H/K\in\CH(X/K)$, where $X/K$ is a metrizable locally compact abelian group.
\end{teo}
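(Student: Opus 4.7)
The plan is to handle the two directions separately, with the forward direction using the $\vs$-radical $\n_\vs(X)$ as the canonical choice of $K$, and the backward direction being an immediate consequence of Proposition \ref{PcntrimgChar}.

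For the easy direction ($\Leftarrow$), suppose such a $K$ exists. Since $K \le H$, one has $H = \pi^{-1}(H/K)$, where $\pi: X \to X/K$ is the canonical projection. Since $H/K \in \CH(X/K)$ by hypothesis, Proposition \ref{PcntrimgChar} gives that $\pi^{-1}(H/K) = H \in \CH(X)$. Note that for this direction neither local compactness nor metrizability of $X/K$ is actually needed; they are extras we get for free in the other direction.

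For the nontrivial direction ($\Rightarrow$), assume $H = \sv(X)$ for some $\vs \in \wh{X}^\N$, and set
\[
K := \n_\vs(X).
\]
The inclusion $K \le H$ is Lemma \ref{Rnv}(i), and $K$ is closed and $\G$ in $X$ by Lemma \ref{LnvGdelta}(iv). Since $X$ is locally compact and $K$ is closed, $X/K$ is a locally compact abelian group. Moreover, $\{0\}$ is $\G$ in $X/K$ by Lemma \ref{LnvGdelta}(iv), so $X/K$ is first countable; being locally compact, it is therefore metrizable. Finally, applying Lemma \ref{TnewTeo} (the implication (i)$\Rightarrow$(iii) with $F = K = \n_\vs(X)$) yields $H/K = s_\us(X/K)$, where $u_n$ is the factorization of $v_n$ through $\pi$. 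In particular, $H/K \in \CH(X/K)$.

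There is no real obstacle here: both directions reduce to properly invoking earlier results. The one point that requires a nontrivial (but standard) external fact is the passage from ``$\{0\}$ is $\G$ in the locally compact group $X/K$'' to ``$X/K$ is metrizable'', which is the classical Birkhoff--Kakutani theorem in the locally compact abelian setting. Everything else is bookkeeping around the choice $K = \n_\vs(X)$, which is natural because it is the smallest closed subgroup of $X$ on which all the $v_n$ vanish, hence the finest quotient through which the characterizing sequence $\vs$ still makes sense.
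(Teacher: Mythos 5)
Your proof is correct and follows essentially the same route as the paper: the backward direction via Proposition \ref{PcntrimgChar} (equivalently, Lemma \ref{TnewTeo}(ii)$\Rightarrow$(i)), and the forward direction by taking $K=\n_\vs(X)$, which is closed and $\G$ with $\n_\vs(X)\le\sv(X)$ by Lemma \ref{Rnv}, and then applying Lemma \ref{TnewTeo}(i)$\Rightarrow$(iii). The paper states the same argument more tersely; your explicit appeal to the Birkhoff--Kakutani-type fact that a locally compact group with $\G$ identity is metrizable fills in a step the paper leaves implicit.
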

\begin{proof} 
The equivalence follows from Lemma \ref{TnewTeo}, since by Lemma \ref{Rnv}(iv) the subgroup $\n_\vs(X)$ is closed and $\G$ for every $\vs\in\wh X^\N$. Since $K$ is closed and $\G$, $X/K$ is a metrizable locally compact abelian group. 
\end{proof}

By Lemma \ref{Rnv}, $\n_\vs(X)$ is always closed and characterized.  Theorem \ref{Titemi} describes the closed characterized subgroups of the locally compact abelian groups $X$ by showing that these are precisely the $N$-characterized subgroups of $X$.

\begin{teo}\label{Titemi}
Let $X$ be a locally compact abelian group and $H$ a subgroup of $X$. The following conditions are equivalent:
\begin{itemize}
\item[(i)] $H$ is closed and $H\in\CH(X)$;
\item[(ii)] $H\in\CH_N(X)$;
\item[(iii)] $H$ is closed and $\G$ in the Bohr topology;
\item[(iv)] $H$ is closed, $\G$ and $[X:H]\le\cc$.
\item[(v)]  $H$ is closed and $\wh{X/H}$ is separable.
\end{itemize}
\end{teo}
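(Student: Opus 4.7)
The plan is to prove the cycle $(\mathrm{ii})\Rightarrow(\mathrm{iii})\Rightarrow(\mathrm{iv})\Rightarrow(\mathrm{ii})$, to note that $(\mathrm{ii})\Leftrightarrow(\mathrm{v})$ is already Corollary~\ref{T12}, and to graft $(\mathrm{i})$ onto this cycle via the trivial $(\mathrm{ii})\Rightarrow(\mathrm{i})$ together with $(\mathrm{i})\Rightarrow(\mathrm{iv})$.

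For $(\mathrm{ii})\Rightarrow(\mathrm{iii})$, writing $H=\n_\vs(X)$ and combining Lemma~\ref{Rnv}(iv) with Remark~\ref{RchIndex} shows that $H$ is closed and $\G$ in every group topology on $X$ that makes each $v_n$ continuous, so in particular in the Bohr topology $\tau^+$. For $(\mathrm{iii})\Rightarrow(\mathrm{iv})$, since $\tau^+\subseteq\tau$ every $\tau^+$-open set is $\tau$-open, so $H$ is $\G$ in $X$ as well. For the cardinality bound, $H$ is closed in $X$ and hence dually closed by Fact~\ref{lcafact}(i), so the Bohr topology of $X/H$ is the quotient topology inherited from $X^+$; the quotient $(X/H)^+$ is then a precompact group in which $\{0\}$ is $\G$, and Lemma~\ref{LinjPre} provides a continuous injection $(X/H)^+\hookrightarrow\T^\N$, forcing $|X/H|\leq\cc$. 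For $(\mathrm{iv})\Rightarrow(\mathrm{ii})$: since $X$ is locally compact abelian and $H$ is closed, $X/H$ is locally compact abelian; that $H$ is $\G$ in $X$ makes $\{0\}$ a $\G$-set in $X/H$, so $X/H$ is metrizable. Together with $|X/H|\leq\cc$, Theorem~\ref{Tinjection} yields a continuous injection $X/H\to\T^\N$, and Theorem~\ref{TsupeGeneral} then delivers $H\in\CH_N(X)$.

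The equivalence $(\mathrm{ii})\Leftrightarrow(\mathrm{v})$ is Corollary~\ref{T12}, and $(\mathrm{ii})\Rightarrow(\mathrm{i})$ is immediate since $N$-characterized subgroups are, by definition, characterized and are always closed as annihilators of subsets of $\widehat X$. The remaining step is $(\mathrm{i})\Rightarrow(\mathrm{iv})$: writing $H=\sv(X)$, Lemma~\ref{Rnv} furnishes $\n_\vs(X)\leq H$, with $\n_\vs(X)$ closed and $\G$ in $X$ and $[X:\n_\vs(X)]\leq\cc$, so $Y:=X/\n_\vs(X)$ is metrizable. The subgroup $H/\n_\vs(X)$ is closed in the metrizable space $Y$ and is therefore $\G$ there; pulling back along the projection $\pi\colon X\to Y$ gives that $H=\pi^{-1}(H/\n_\vs(X))$ is $\G$ in $X$, while the index bound $[X:H]\leq\cc$ is Corollary~\ref{<c}.

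The step demanding the most care is $(\mathrm{iii})\Rightarrow(\mathrm{iv})$: the subtlety lies in transferring the $\G$-property of $H$ in $X^+$ to a $\G$-point in $(X/H)^+$, which requires that the Bohr topology commute with quotients by closed subgroups of locally compact abelian groups. Fact~\ref{lcafact}(i) handles this, after which every other implication is a direct invocation of a result collected in the earlier sections.
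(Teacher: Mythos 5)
Your proof is correct and draws on exactly the same toolkit as the paper (Lemma \ref{Rnv}, Remark \ref{RchIndex}, Lemma \ref{LinjPre}, Theorems \ref{Tinjection} and \ref{TsupeGeneral}, Corollaries \ref{T12} and \ref{<c}), but you wire the implications differently: the paper runs the single cycle $(\mathrm{i})\Rightarrow(\mathrm{iv})\Rightarrow(\mathrm{iii})\Rightarrow(\mathrm{ii})\Rightarrow(\mathrm{i})$, while you run $(\mathrm{ii})\Rightarrow(\mathrm{iii})\Rightarrow(\mathrm{iv})\Rightarrow(\mathrm{ii})$ and graft $(\mathrm{i})$ on via $(\mathrm{i})\Rightarrow(\mathrm{iv})$. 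Your $(\mathrm{i})\Rightarrow(\mathrm{iv})$ and $(\mathrm{iv})\Rightarrow(\mathrm{ii})$ coincide with the paper's steps, and your direct $(\mathrm{ii})\Rightarrow(\mathrm{iii})$ via Remark \ref{RchIndex} is arguably cleaner than the paper's detour through $(\mathrm{iv})$. The one place your rearrangement costs you something is the index bound inside $(\mathrm{iii})\Rightarrow(\mathrm{iv})$: the paper gets $[X:H]\le\cc$ for free from Corollary \ref{<c} because at that stage its hypothesis is $(\mathrm{i})$, whereas you must extract it from the Bohr $\G$-property alone. Your route --- identify $(X/H)^+$ with the quotient of $X^+$ and apply Lemma \ref{LinjPre} --- does work, but the two facts you need there, namely $X^+/H=(X/H)^+$ and that $H$ being $\G$ in $X^+$ forces $\{0\}$ to be $\G$ in that quotient, do not follow from Fact \ref{lcafact}(i) (dual closedness) as you claim: the first uses that every character of $X/H$ lifts (Lemma \ref{perp}(i) together with LCA duality), and the second is most safely obtained by choosing inside each open set of a $\G$-representation of $H$ in $T_{\wh X}$ a basic neighborhood $\bigcap_i v_i^{-1}(W_i)$ of $0$, so that the resulting countable family of characters has annihilator $N\le H$ with $[X:N]\le\cc$, whence $[X:H]\le\cc$. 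To be fair, the paper's own one-line $(\mathrm{iii})\Rightarrow(\mathrm{ii})$ leans on the same unstated transfer, so this is a gap in justification rather than in substance.
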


\begin{proof}
(iii)$\Rightarrow$(ii) follows from Theorem \ref{TsupeGeneral}, (ii)$\Rightarrow$(i) by Lemma \ref{Rnv} and (ii)$\Leftrightarrow$(v) is Corollary \ref{T12}.

(iv)$\Rightarrow$(iii) The group $X/H$ is locally compact, metrizable and has cardinality at most $\cc$, therefore by Theorem \ref{Tinjection} there exists a continuous injective homomorphism $j:X/H\to\T^\N$. Then Theorem \ref{TsupeGeneral} gives the thesis.

(i)$\Rightarrow$(iv) Let $H=\sv(X)$ for $\vs\in\wh X^\N$, and let $\pi:X\to X/\n_\vs(X)$ be the canonical projection. By Corollary \ref{<c}, $[X:H]\le\cc$. By Lemma \ref{Rnv}, $\n_\vs(X)\le\sv(X)$ and $\n_\vs(X)$ is closed and $\G$. Then $X/\n_\vs(X)$ is a metrizable locally compact abelian group, and by hypothesis $\sv(X)$ is closed. Therefore, $\sv(X)/\n_{\vs}(X)$ is closed and hence $\G$ in $X/\n_\vs(X)$. Therefore, $\sv(X)=\pi^{-1}(\sv(X)/\n_{\vs}(X))$ is closed and $\G$ in $X$.
\end{proof}

We see now the following result from \cite{DG13} as a consequence of Theorem \ref{Titemi}. 
	
\begin{cor}[{\cite[Theorem A]{DG13}}]\label{TGchar}
Let $X$ be a compact abelian group and $H$ a closed subgroup of $X$. Then $H\in\CH(X)$ if and only if $H$ is $\G$.
\end{cor}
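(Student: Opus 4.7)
The plan is to derive Corollary \ref{TGchar} as an immediate specialization of the equivalence (i) $\Leftrightarrow$ (iv) in Theorem \ref{Titemi} by observing that the cardinality condition $[X:H]\leq\cc$ becomes redundant in the compact case.

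First I would note the forward implication is entirely free: if $H$ is a closed characterized subgroup of the compact (hence locally compact) abelian group $X$, then Theorem \ref{Titemi}, (i) $\Rightarrow$ (iv), yields in particular that $H$ is $\G$ in $X$.

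For the reverse implication, assume $H$ is a closed $\G$ subgroup of $X$. Since $X$ is locally compact, it suffices (by (iv) $\Rightarrow$ (i) of Theorem \ref{Titemi}) to show that $[X:H]\leq\cc$. Because $X$ is compact and $H$ is closed, the quotient $X/H$ is compact; because $H$ is $\G$ in $X$, the trivial subgroup $\{0\}$ is $\G$ in $X/H$. A compact Hausdorff group in which $\{0\}$ is $\G$ is metrizable (this is the standard Birkhoff--Kakutani consequence for compact groups), and a compact metrizable space has cardinality at most $\cc$. Therefore $[X:H]=|X/H|\leq\cc$, and all the hypotheses of condition (iv) in Theorem \ref{Titemi} are met, so $H\in\CH(X)$.

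The only step that requires any thought is the passage ``$\{0\}$ is $\G$ in the compact group $X/H$ implies $X/H$ is metrizable''; this is a well-known folklore fact about compact (abelian) groups, but it is the sole place where compactness of $X$ is essentially used to eliminate the cardinality clause from condition (iv) of Theorem \ref{Titemi}. With it in hand, the corollary is a one-line specialization and no further work is needed.
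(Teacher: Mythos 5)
Your proposal is correct and follows essentially the same route as the paper: both directions are obtained by specializing Theorem \ref{Titemi}, with the converse reduced to checking $[X:H]\leq\cc$ via the observation that the compact quotient $X/H$ is metrizable (the paper states this in one line; you merely spell out the Birkhoff--Kakutani step). No gaps.
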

\begin{proof} 
If $H$ is characterized, then $H$ is $\G$ by Theorem \ref{Titemi}. Viceversa, assume that $H$ is $\G$. Then $X/H$ is compact and metrizable, hence $\la X/H\ra \leq \cc$. So again Theorem \ref{Titemi} implies that $H$ is characterized.
\end{proof}


In the following theorem we use that the $\G$-subgroups of a locally compact abelian group are always closed (see \cite[Theorem A.2.14]{Impi}).
	
\begin{teo}\label{CKNG}
Let $X$ be a compact abelian group and $H$ a subgroup of $X$. The following conditions are equivalent:
\begin{itemize}
 \item[(i)] $H\in\CH_K(X)$ and $H$ is closed; 
 \item[(ii)] $H$ is $\G$ and non-open;
 \item[(iii)] $H\in\CH_N(X)$ and $H$ is non-open;
 \item[(iv)] $H\in\CH(X)$ and $H$ is closed and non-open. 
\end{itemize} 
\end{teo}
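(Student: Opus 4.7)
The plan is to establish the equivalences by cycling through the implications, leveraging the heavy machinery already proved in the excerpt, so that no substantial new argument is needed.

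First, I would note the easy equivalence (iii) $\Leftrightarrow$ (iv). Indeed, Theorem \ref{Titemi} (applied to the locally compact abelian group $X$) gives $\CH_N(X)=\{H\leq X:H\ \text{is closed and}\ H\in\CH(X)\}$. Adding the restriction ``non-open'' on both sides immediately yields (iii) $\Leftrightarrow$ (iv).

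Next, for (ii) $\Leftrightarrow$ (iv), I would use Corollary \ref{TGchar}: a closed subgroup of the compact abelian group $X$ is characterized if and only if it is $\G$. Since any $\G$-subgroup of a locally compact abelian group is automatically closed, the conjunction of ``closed'' and ``$H\in\CH(X)$'' is the same as ``$H$ is $\G$''. Appending the ``non-open'' clause on both sides produces (ii) $\Leftrightarrow$ (iv).

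Finally, for (i) $\Leftrightarrow$ (iv), the forward direction is immediate: $\CH_K(X)\subseteq\CH(X)$ by definition, and Lemma \ref{CKnotOpen} tells us no open subgroup of a compact abelian group is $K$-characterized, so (i) forces $H$ to be non-open; closedness and membership in $\CH(X)$ are given or immediate. For the reverse direction, assume (iv): $H$ is closed, non-open, and characterized. In the compact group $X$, a closed subgroup is non-open precisely when its index is infinite, so $[X:H]\geq\omega$. Moreover, by the already-established (iv) $\Rightarrow$ (iii) we have $H\in\CH_N(X)$. Now the implication (ii) $\Rightarrow$ (i) of Theorem \ref{T1122} applies (since $H$ is closed of infinite index and $N$-characterized), giving $H\in\CH_K(X)\cap\CH_N(X)$; in particular, $H\in\CH_K(X)$, which is (i).

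The main obstacle, such as it is, is really bookkeeping: one must be careful to notice that the ``non-open'' hypothesis is equivalent to ``infinite index'' in the compact setting, which is the precise condition under which Theorem \ref{T1122} activates and converts $N$-characterization into $K$-characterization. Everything else is a direct appeal to results already proved.
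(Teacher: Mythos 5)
Your proposal is correct, and three of the four implications ((i)$\Rightarrow$(iv) via Lemma \ref{CKnotOpen}, (iii)$\Leftrightarrow$(iv) via Theorem \ref{Titemi}, (ii)$\Leftrightarrow$(iv) via Corollary \ref{TGchar} together with the fact that $\G$-subgroups of locally compact groups are closed) are exactly the paper's. The one place you diverge is the remaining "hard" direction: the paper closes the cycle by proving (ii)$\Rightarrow$(i) --- since $H$ is $\G$ and non-open, the quotient $X/H$ is a metrizable compact non-discrete group, so $\{0\}$ is $K$-characterized in $X/H$ by Corollary \ref{CcomKchar}, and $H=\pi^{-1}(\{0\})$ is then $K$-characterized by the correspondence result, Proposition \ref{PcntrimgChar}. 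You instead prove (iv)$\Rightarrow$(i) by first passing to (iii) via Theorem \ref{Titemi} to get $H\in\CH_N(X)$, observing that closed and non-open in a compact group means infinite index, and then invoking the implication (ii)$\Rightarrow$(i) of Theorem \ref{T1122} directly on $H$ inside $X$. Both arguments are ultimately powered by Theorem \ref{T1122} (Corollary \ref{CcomKchar} is itself a direct consequence of it), so the difference is organizational: your route avoids the quotient-and-pullback step but leans on the full strength of Theorem \ref{Titemi} ((i)$\Rightarrow$(ii) there) to first secure $N$-characterization, whereas the paper only needs the metrizability of $X/H$ and the easier Corollary \ref{CcomKchar}. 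Your bookkeeping observation that non-open $\Leftrightarrow$ infinite index for closed subgroups of compact groups is exactly the right hinge, and the argument goes through.
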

\begin{proof}
(i)$\Rightarrow$(iv) Since Lemma \ref{CKnotOpen} implies that $H$ is non-open, (iv)$\Leftrightarrow$(iii) by Theorem \ref{Titemi} and (iv)$\Leftrightarrow$(ii) by Corollary \ref{TGchar}.

(ii)$\Rightarrow$(i)  Since $X/H$ is a metrizable compact non discrete (hence infinite) abelian group, $\{0\}$ is closed and non-open in $X/H$, hence $\{0\}$ is $K$-characterized in $X/H$ by Corollary \ref{CcomKchar}. Therefore, $H$ is $K$-characterized by Proposition \ref{PcntrimgChar}.
\end{proof}

This theorem generalizes Corollary \ref{TGchar} as it implies that,  for a closed non-open subgroup $H$ 
 of a compact abelian group $X$, one has 
$$H\ \text{is}\ \G\ \Leftrightarrow\ H\in\CH(X)\ \Leftrightarrow\ H\in\CH_N(X)\ \Leftrightarrow\ H\in\CH_K(X).$$

The following immediate consequence of Corollaries \ref{Popen} and \ref{Kcar=card} shows that for a discrete abelian group all characterized subgroups are $N$-characterized.

\begin{cor}\label{DiscGr}
Let $X$ be an infinite discrete abelian group and $H$ a subgroup of $X$. The following conditions are equivalent:
\begin{itemize}
\item[(i)] $H\in\CH(X)$;
\item[(ii)] $H\in\CH_N(X)$;
\item[(iii)] $H\in\CH_K(X)$;
\item[(iv)] $[X:H]\le\cc$.
\end{itemize}
\end{cor}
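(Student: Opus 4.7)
The plan is to deduce this as a direct combination of two already established results, exploiting the key feature of the discrete setting: \emph{every} subgroup of a discrete abelian group is open.

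First, I would note that since $X$ carries the discrete topology, the subgroup $H$ is automatically open in $X$. Therefore Corollary \ref{Popen} applies verbatim and yields the equivalence (i)$\Leftrightarrow$(ii)$\Leftrightarrow$(iv), that is, $H\in\CH(X) \Leftrightarrow H\in\CH_N(X) \Leftrightarrow [X:H]\leq\cc$.

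Next, the equivalence (i)$\Leftrightarrow$(iii)$\Leftrightarrow$(iv) is precisely the content of Corollary \ref{Kcar=card}, which was proved above for infinite discrete abelian groups. Chaining these two equivalences through the common conditions (i) and (iv) gives the full four-way equivalence stated in the corollary.

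There is no real obstacle here, since both Corollaries \ref{Popen} and \ref{Kcar=card} have already been established in the paper; the only observation needed is the triviality that ``open'' is automatic in the discrete case. One could alternatively phrase the proof in a single line by remarking that the hierarchy $\CH_N(X) \cup \CH_K(X) \subseteq \CH(X)$ always holds, while the reverse inclusions both follow from the two cited corollaries under the hypothesis $[X:H]\leq \cc$ (with the bound $[X:H]\leq\cc$ itself being forced on any characterized subgroup by Corollary \ref{<c}).
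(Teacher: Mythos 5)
Your proof is correct and is exactly the paper's argument: the paper derives this corollary as an ``immediate consequence'' of Corollaries \ref{Popen} and \ref{Kcar=card}, using precisely your observation that every subgroup of a discrete group is open so that Corollary \ref{Popen} applies. Nothing further is needed.
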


\section{$T$-characterized closed subgroups of compact abelian groups}\label{SSMM}\label{ChTBs}

In \cite[Therorem 4]{Gab10b} Gabriyelyan observed that if $\us$ is a $T$-sequence of an infinite countable abelian group $G$, then  (see \eqref{sigmaeq} for the definition of $\sigma_\vs$)  
\begin{equation}\label{EqT4gab}
\n(G,\sigma_\vs)\cong\sv(\widehat{G_d})^\perp \text{ algebraically},
\end{equation}
where  $G_d$  denotes the abelian group $G$ endowed with the discrete topology. Therefore, the following fact is an immediate consequence of \eqref{EqT4gab}.

\begin{fact}\label{FMAP}
Let $\vs$ be a $T$-sequence of an infinite countable abelian group $G$. Then:
\begin{itemize}
\item[(i)] $(G,\sigma_{\vs})$ is MAP if and only if $\vs$ is a $TB$-sequence; 
\item[(ii)] $(G,\sigma_{\vs})$  is MinAP if and only if $\sv(\widehat{G_d})=\{0\}$ and  $G=\langle\vs\rangle$. 
\end{itemize}
\end{fact}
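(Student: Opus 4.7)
The plan is to deduce both equivalences directly from the identification $\n(G,\sigma_\vs) \cong \sv(\widehat{G_d})^\perp$ in \eqref{EqT4gab}, interpreted inside the Pontryagin duality between the discrete group $G$ and its compact dual $\widehat{G_d}$, together with the cited characterization of $TB$-sequences in \cite{DMT05}. The key observation is that $\sv(\widehat{G_d})^\perp$ sits in $G = \widehat{\widehat{G_d}}$, so the two extremal cases $\sv(\widehat{G_d})^\perp = \{0\}$ and $\sv(\widehat{G_d})^\perp = G$ correspond, respectively, to $\sv(\widehat{G_d})$ being dense in $\widehat{G_d}$ and to $\sv(\widehat{G_d})$ being trivial.

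For (i), I would use that $(G,\sigma_\vs)$ is MAP iff $\n(G,\sigma_\vs)=\{0\}$. By \eqref{EqT4gab}, this is equivalent to $\sv(\widehat{G_d})^\perp=\{0\}$, and since $\widehat{G_d}$ is compact, Pontryagin duality converts this into the density of $\sv(\widehat{G_d})$ in $\widehat{G_d}$. The cited Fact from \cite{DMT05} then identifies the latter with $\vs$ being a $TB$-sequence.

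For (ii), $(G,\sigma_\vs)$ is MinAP iff $\n(G,\sigma_\vs)=G$, which via \eqref{EqT4gab} amounts to $\sv(\widehat{G_d})^\perp = G$; since $G$ separates the points of $\widehat{G_d}$, this forces $\sv(\widehat{G_d})=\{0\}$. To produce the additional condition $G=\langle\vs\rangle$, I would argue by contradiction: if $\langle\vs\rangle$ were a proper subgroup of $G$, then the non-trivial discrete abelian group $G/\langle\vs\rangle$ would carry a non-zero character, which lifts through the canonical projection to a non-zero $\chi\in\widehat{G_d}$ vanishing on each $v_n$; thus $\chi\in\sv(\widehat{G_d})$, contradicting $\sv(\widehat{G_d})=\{0\}$. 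The converse is immediate from the Comfort--Ross identification $\widehat{(G,\sigma_\vs)}=\sv(\widehat{G_d})$, which makes the whole character group trivial once $\sv(\widehat{G_d})=\{0\}$. No serious obstacle is expected; the only delicate point is keeping track of which side of the duality each annihilator in \eqref{EqT4gab} lives on.
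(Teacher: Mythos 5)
Your proposal is correct and follows exactly the route the paper intends: the paper gives no written proof, merely asserting that Fact \ref{FMAP} is ``an immediate consequence'' of \eqref{EqT4gab}, and your argument supplies precisely the missing details (triviality of the annihilator of a subgroup of the compact group $\widehat{G_d}$ equals density, the cited fact from \cite{DMT05} for the $TB$-sequence equivalence, and the Comfort--Ross identification $\widehat{(G,\sigma_\vs)}=\sv(\widehat{G_d})$ for the MinAP direction). As a side remark, your contradiction argument in (ii) in fact shows that $\sv(\widehat{G_d})=\{0\}$ already forces $G=\langle\vs\rangle$, so the second condition in the statement is redundant --- but that is a feature of the paper's formulation, not a gap in your proof.
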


Recall that a topological abelian group $X$ is \emph{almost maximally almost periodic} (briefly, \emph{AMAP}) if $\n(X)$ is finite.

\begin{rem}
In relation with Fact \ref{FMAP}, Luk\'acs in \cite{Luk06} found a $T$-sequence in $\Z(p^\infty)$ that is not a $TB$-sequence, providing in this way an example of an AMAP group. More precisely, he found a characterizing sequence $\vs$ for $p^m\Jp\le\Jp$ for a fixed $m\in\N_+$, i.e., $\sv(\Jp)=p^m\Jp$. In this way, being $\Jp/p^m\Jp$ finite, then \[\sv(\Jp)^\perp=\n(\Z(p^\infty),\sigma_\vs)\neq\{0\}\text{ is finite.}\] Therefore, $(\Jp,\sigma_\vs)$ is AMAP. Further results in this direction were obtained by Nguyen \cite{Ngu09}. Finally, Gabriyelyan in \cite{Gab09} proved that an abelian group $G$ admits an AMAP group topology if and only if $G$ has non-trivial torsion elements.
\end{rem}

The following theorem, due to Gabriyelyan, links the notions of $T$-characterized subgroup and MinAP topology. 

\begin{teo}\emph{\cite{Gab14}}\label{Tgab14}
Let $X$ be a compact abelian group and $H$ a closed subgroup of $X$. Then $H\in\CH_T(X)$ if and only if $H$ is $\G$ and $H^\perp$ carries a MinAP topology. 
\end{teo}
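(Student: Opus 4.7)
The plan is to dualize the problem by passing to the compact quotient $Y:=X/H$ with dual $\wh Y \cong H^\perp$, and to exploit Fact \ref{FMAP}(ii), which equates (for a $T$-sequence $\ws$ on a countable discrete abelian group $A$) the MinAP property of $(A,\sigma_\ws)$ with the conjunction $s_\ws(\wh{A_d})=\{0\}$ and $A=\langle\ws\rangle$. The bridge is that for $\ws\in(H^\perp)^\N$, regarded simultaneously as a sequence of characters of $X$ and of $Y$ (via $H^\perp\cong\wh Y$), one has $s_\ws(X)=\pi^{-1}(s_\ws(Y))$ for $\pi:X\to Y$ the canonical projection; hence $s_\ws(X)=H$ if and only if $s_\ws(Y)=\{0\}$. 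Combined with Corollary \ref{TGchar} for the $\G$-part, the theorem reduces to the equivalence ``$H\in\CH_T(X)$ iff $H$ is $\G$ in $X$ and $\{0\}\in\CH_T(Y)$''.

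\textbf{Forward direction $(\Rightarrow)$.} Assume $H=\sv(X)$ for a non-trivial $T$-sequence $\vs\in\wh X^\N$. Then $H$ is closed and characterized, so $\G$ by Corollary \ref{TGchar}; consequently $Y$ is compact metrizable and $H^\perp$ is countable. I would construct a $T$-sequence $\ws\subseteq H^\perp$ with $s_\ws(X)=H$ (equivalently $s_\ws(Y)=\{0\}$) by modifying $\vs$: since $v_n\restriction_H\to 0$ pointwise and $H^\perp$ is a countable dense-separating subgroup of $\wh Y$, a diagonal construction adjusts each $v_n$ by an element of $\wh X$ designed to cancel its action modulo $H^\perp$ on a chosen countable dense subset of $H$, producing a new sequence inside $H^\perp$ with the same characterized subgroup in $X$. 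Inserting a sparse enumeration of $H^\perp$ into $\ws$ (without disrupting the $T$-sequence property) ensures $H^\perp=\langle\ws\rangle$. Since $H^\perp$ is already discrete, $\wh{H^\perp}=Y$, so Fact \ref{FMAP}(ii) applied to $A=H^\perp$ yields that $(H^\perp,\sigma_\ws)$ is MinAP; hence $H^\perp$ carries a MinAP Hausdorff group topology.

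\textbf{Reverse direction $(\Leftarrow)$.} Assume $H$ is $\G$ (so $H^\perp$ is countable and $Y$ is compact metrizable) and $H^\perp$ carries a MinAP Hausdorff group topology. Here one invokes the fact (from Protasov--Zelenyuk theory for countable abelian groups, as used by Gabriyelyan in \cite{Gab10b,Gab14}) that a countable abelian group admits a MinAP group topology if and only if it admits a $T$-sequence $\ws$ such that $(H^\perp,\sigma_\ws)$ is MinAP and $H^\perp=\langle\ws\rangle$. Fact \ref{FMAP}(ii) then gives $s_\ws(\wh{H^\perp})=s_\ws(Y)=\{0\}$, so $\{0\}\in\CH_T(Y)$. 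Proposition \ref{PcntrimgChar}(iv) lifts this to $H=\pi^{-1}(\{0\})\in\CH_T(X)$.

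\textbf{Main obstacle.} The principal technical difficulty lies in the forward direction's modification step: producing a $T$-sequence \emph{inside} $H^\perp$ that characterizes $H$ in $X$, starting from a $T$-sequence in the ambient $\wh X$ that a priori has no reason to lie in $H^\perp$. Preserving simultaneously the $T$-sequence property (i.e., null-convergence in \emph{some} Hausdorff group topology on $H^\perp$) and the precise characterized subgroup $H$ under such modifications is delicate, and is the technical heart of Gabriyelyan's argument in \cite{Gab14}. A secondary subtlety in the reverse direction is that refining a MinAP topology can create new continuous characters, so the extraction of a $T$-sequence $\ws$ with $\sigma_\ws$ still MinAP requires the cited Protasov--Zelenyuk-type result rather than a direct ``take any null sequence'' argument. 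Once these two ingredients are in place, the remainder of the proof assembles mechanically from Fact \ref{FMAP}(ii), Corollary \ref{TGchar}, Proposition \ref{PcntrimgChar}(iv), and the standard Pontryagin-duality dictionary.
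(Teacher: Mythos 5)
The paper does not prove this statement at all: Theorem \ref{Tgab14} is imported verbatim from Gabriyelyan's paper \cite{Gab14} and used as a black box (e.g.\ in the proof of Theorem \ref{MainThChapter7}), so there is no in-paper argument to compare yours against. Your overall architecture --- pass to $Y=X/H$, identify $H^\perp$ with $\wh Y$, use Corollary \ref{TGchar} for the $\G$ part, Fact \ref{FMAP}(ii) for the MinAP part, and Proposition \ref{PcntrimgChar}(iv) to lift back --- is the right skeleton, and in the reverse direction you correctly isolate the genuinely deep ingredient (that a countable group admitting a MinAP topology admits one generated by a $T$-sequence with trivial $s_\ws$) as an external Protasov--Zelenyuk/Gabriyelyan input.

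The problem is the forward direction. What you call the ``principal technical difficulty'' --- forcing the characterizing sequence into $H^\perp$ --- is not where the difficulty lies, and your proposed ``diagonal construction'' is not a proof: you never say what is added to $v_n$, why the result is a character lying in $H^\perp$, why the characterized subgroup is unchanged, or why the $T$-sequence property survives. More importantly, no such construction is needed. Since $H$ is a closed, hence compact, subgroup with $H\le s_\vs(X)$, Proposition \ref{CCTWiii} (equivalently Lemma \ref{PnPAnChar}) forces $\vs\restriction_H$ to be eventually null, i.e.\ a tail $\vs_{(m)}$ already lies in $H^\perp$; this tail is still a $T$-sequence and still characterizes $H$. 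Factoring through $\pi$ gives $\bar\vs\in\wh Y^\N$ with $s_{\bar\vs}(Y)=\{0\}$, and then $\langle\Gamma_{\bar\vs}\rangle=\wh Y=H^\perp$ comes for free from duality ($\n_{\bar\vs}(Y)\le s_{\bar\vs}(Y)=\{0\}$ gives $\langle\Gamma_{\bar\vs}\rangle^{\perp\perp}=\wh Y$, and subgroups of the discrete group $\wh Y$ are closed), so your ``sparse enumeration'' insertion --- which is itself dubious, since interleaving an enumeration of all of $H^\perp$ need not preserve the $T$-sequence property --- is also unnecessary. With the tail argument in place, Fact \ref{FMAP}(ii) applied to $G=H^\perp$ (countable because $H$ is $\G$) finishes the forward direction cleanly; as written, your version of this direction has a genuine gap exactly where you claim the technical heart lies.
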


Following \cite[\S 4]{DPS90}, for a topological abelian group $X$ and a prime number $p$, we denote by $T_p(X)$\index{$T_p(X)$} the closure of the subgroup $X_p=\{x\in X:p^nx\to 0\}$. In case $X$ is compact, one can prove that 
\begin{equation}\label{LAAAAst}
T_p(X) = \{mX: m\in \N_+, (m,p) = 1\}.
\end{equation}
In particular, $T_p(X)$  contains the connected component $c(X)$ of $X$. More precisely, if $X/c(X) = \prod_{p\in\Prm} (X(c(X))_p$ is the topologically primary decomposition of the totally disconnected compact abelian group $X/c(X)$, then 
$$T_p(X)/c(X) \cong  (X/c(X))_p = T_p(X/c(X)).$$ 

Following \cite{DS10}, we say that $d \in \N$ is a proper divisor of $n \in \N$ provided that $d \not\in \{0,n\}$ and $dm =n$ for some $m \in \N$.
Note that, according to our definition, each $d \in\N \setminus \{0\}$ is a proper divisor of $0$.

\begin{defin}\label{def:eo}
Let $G$ be an abelian group.
\begin{itemize}
\item[(i)] For $n \in \N$ the group $G$ is said to be of \emph{exponent} $n$ (denoted by $exp(G)$) if $nG = \{0\}$, but $dG\not = \{0\}$ for every proper divisor $d$ of $n$. We say that $G$ is \emph{bounded} if $exp(G)>0$, and otherwise that $G$ is \emph{unbounded}.
\item[(ii)] \cite{GK03} If $G$ is bounded, the essential order $eo(G)$ of $G$ is the smallest $n\in\N_+$ such that $nG$ is finite. If $G$ is unbounded, we define $eo(G) = 0$.
\end{itemize}
\end{defin}

In the next theorem we aim to give a detailed description of the closed characterized subgroups $H$ of $X$ that are not $T$-characterized.
As stated in Corollary \ref{TGchar}, a closed subgroup $H$ of a compact abelian group $X$ is characterized if and only if $H$ is $\G$ (i.e., $X/H$ is metrizable).  This explains the blanket condition imposed on $H$ to be a $\G$-subgroup of $X$. 

\begin{teo}\label{MainThChapter7}
For a compact abelian group $X$ and a $\G$-subgroup $H$ of $X$, the following conditions are equivalent: 
\begin{itemize}
\item[(i)] $H\not\in\CH_T(X)$;
\item[(ii)] $H^\perp$ does not admit a MinAP group topology; 
\item[(iii)] there exists $m\in\N_$ such that $m(X/H)$ is finite and non-trivial;
\item[(iv)] $eo(X/H) < exp(X/H)$;
\item[(v)] there exists a finite set $P$ of primes so that:
\begin{itemize}
\item[(a)] $T_q(X)\leq H$ for all $q\in\Prm\setminus P$,
\item[(b)] for every $p \in P$ there  exist $k_p\in \N$ with $p^{k_p}T_p(X) \leq H$,
\item[(c)] there exists $p_0 \in P$ such that $p_0^{k_{p_0}-1}T_{p_0}(X) \not\leq H$ and $p_0^{k_{p_0}-1}T_{p_0}(X) \cap H$ has finite index in $p_0^{k_{p_0}}T_{p_0}(X)$;  
\end{itemize}
\item[(vi)] there exists a finite set $P$ of primes so that $X/H \cong \prod_{p\in P} K_p$, where each $K_p$ is a compact $p$-group and there exist some $p_0\in P$ and $k \in \N$ such that $p_0^kK_{p_0}$ is finite and non-trivial. 
\end{itemize}
\end{teo}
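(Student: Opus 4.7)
My plan is to pivot on condition (ii): Gabriyelyan's Theorem~\ref{Tgab14} delivers $(i)\Leftrightarrow(ii)$ directly, while the algebraic and structural conditions (iii)--(vi) are shown to be mutually equivalent by elementary considerations on $Y := X/H$. Finally, $(ii)\Leftrightarrow(iii)$ is established via Pontryagin duality together with the known characterization of countable abelian groups admitting a minimally almost periodic Hausdorff group topology.

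Step~1, $(i)\Leftrightarrow(ii)$, is immediate: since $H$ is $\G$ by hypothesis, Theorem~\ref{Tgab14} yields $H\in\CH_T(X)$ iff $H^\perp$ admits a MinAP topology, and contraposition concludes. For $(iii)\Leftrightarrow(iv)$, the key remark is that if $mY$ is finite for some $m$ then $Y$ is bounded (since $mY$ killed by some $n$ forces $mnY = 0$); if $Y$ is unbounded both conditions fail by convention, while for bounded $Y$ with $e := \exp(Y)$ and $f := \operatorname{eo}(Y)$, a B\'ezout argument yields $mY = \gcd(m,e)Y$ and $f\mid e$, so $mY$ is finite iff $f\mid m$ and non-trivial iff $e\nmid m$, whence such $m$ exists iff $f<e$. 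For $(iii)\Leftrightarrow(vi)$, bounded $Y$ admits a primary decomposition $Y\cong\prod_{p\in P}Y_p$ with $P$ finite and $Y_p$ a bounded compact $p$-group; since multiplication by $m$ with $\gcd(m,p)=1$ is an automorphism of $Y_p$, condition (iii) reduces to ``$p_0^kY_{p_0}$ is finite and non-trivial for some $p_0\in P$, $k\in\N$'', which is exactly (vi) with $K_p := Y_p$. For $(v)\Leftrightarrow(vi)$, the canonical projection $\pi\colon X\to Y$ sends $T_p(X)$ onto $T_p(Y) = Y_p$ by~\eqref{LAAAAst}, so (v)(a)--(c) translate term-by-term into the structural content of (vi).

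The heart of the argument is Step~5, $(ii)\Leftrightarrow(iii)$. Put $A := H^\perp \cong \wh Y$; this is a countable discrete abelian group (countable since $Y$ is metrizable). Pontryagin duality gives $(mY)^\perp = A[m]$ and hence the chain of isomorphisms
\[
\wh{mY}\;\cong\;A/(mY)^\perp\;=\;A/A[m]\;\cong\;mA,
\]
so $mY$ is finite (resp., trivial) iff $mA$ is finite (resp., trivial). Thus (iii) translates to: some $mA$ is finite and non-trivial. The conclusion then follows from the characterization --- in the spirit of the results of Ajtai--Havas--Koml\'os, Prodanov--Stoyanov, Gabriyelyan and Nguyen recalled around Fact~\ref{FMAP} --- stating that a countable abelian group $A$ fails to admit a MinAP Hausdorff group topology iff some $mA$ is finite and non-trivial.

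The main obstacle is the MinAP characterization used in Step~5. The easy direction is relatively clean: if $mA$ is finite and non-trivial, then in any Hausdorff group topology on $A$ the subgroup $A[m] = \ker(m\cdot\colon A\to A)$ is closed, and the quotient $A/A[m]\cong mA$ is finite Hausdorff, hence discrete; any non-trivial character of this finite quotient pulls back to a non-trivial continuous character of $A$, blocking MinAP. The hard direction --- actually constructing a MinAP group topology on $A$ under the hypothesis that every $mA$ is $\{0\}$ or infinite --- is genuinely delicate and forms the deepest ingredient of the proof. The dualisation $mY\leftrightarrow mA$ is standard but must be performed through the identification $\wh{mY}\cong A/A[m]\cong mA$, rather than by directly identifying $m(X/H)$ with $m H^\perp$.
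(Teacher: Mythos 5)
Your proposal is correct and takes essentially the same route as the paper: (i)$\Leftrightarrow$(ii) from Theorem \ref{Tgab14}, (ii)$\Leftrightarrow$(iii) by combining the duality $\wh{m(X/H)}\cong m(H^\perp)$ with the (cited, not reproved) characterization of abelian groups admitting no MinAP group topology, and (iii)--(vi) via the primary decomposition of the bounded quotient $X/H$ together with the identity $\pi(T_p(X))=T_p(X/H)$. The only noteworthy difference is organizational: the paper proves only (vi)$\Rightarrow$(v)$\Rightarrow$(iii) for the structural conditions, so your explicit (iii)$\Leftrightarrow$(vi) supplies the implication \emph{into} (v)/(vi) that the paper leaves implicit and thereby actually closes the cycle of equivalences.
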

\begin{proof}
(i)$\Leftrightarrow$(ii) is Theorem \ref{Tgab14} and (iii)$\Leftrightarrow$(iv) is clear from the definition.
%
%

(ii)$\Leftrightarrow$(iii) The main theorem in \cite{DS14} states that an abelian group $G$ does not admit a MinAP group topology precisely when there exists $m\in\N_+$ such that $mG$ is finite and non-trivial. In our case  $G = H^\perp$ is topologically isomorphic to $\wh{X/H}$, so $G$ does not admit a MinAP group topology if and only if $m\widehat{X/H} = \widehat{m(X/H)}$ is finite and non-trivial, and this occurs precisely when $m(X/H)$ is finite and non-trivial.

%

(vi)$\Rightarrow$(v) Write $X/H \cong \prod_{p\in P} K_p$, where each $K_p$ is a compact $p$-group. Let $p^{k_p}= exp(K_p)$ for every $p\in P$ and let $p_0^kK_{p_0}$ be finite and non-trivial for $p_0\in P$ and $k \in \N$. 
 
Obviously, all $q\in\Prm\setminus P$ are coprime to $m=exp(X/H)$.  As $mX \leq H$, we deduce from \eqref{LAAAAst} that 
\begin{equation}\label{Next}
T_q(X)\leq H,\ \text{for all}\  q\in\Prm\setminus P.
\end{equation} 
This proves (a). From \eqref{Next} we deduce that that $c(X)\leq H$. 

The quotient groups $X' = X/c(X)$ and $H' = H/c(X)$ are totally disconnected, hence  $X' = \prod_{p\in\Prm} X_p'$ and $H' = \prod_{p\in\Prm} H_p'$. Here 
\begin{equation}\label{Last}
X_p' = T_p(X)/c(X)\ \mbox{  and  }\ H_p' = T_p(H)/c(X)\ \text{for every}\  p\in\Prm.
\end{equation} 
Furthermore,  $X' = \prod_{p \in P} X'_p  \times \prod_{q\in\Prm\setminus P} X'_q $. From \eqref{Next} we deduce that $X'_q \leq H_q'$ for all $q\in\Prm\setminus P$. Therefore, $\prod_{q\in\Prm\setminus P} X'_q \leq H'$ and $H' = \prod_{p \in P} H'_p \times \prod_{q\in\Prm\setminus P} X'_q$. Hence, $X/H =  \prod_{p \in P}X_p'/ H'_p $ and consequently, $X_p'/ H'_p  \cong K_p$ for all $p \in P$. Thus, $p^{k_p}X_p' \leq H_p'$ for all $p\in P$. Equivalently, $ p^{k_p}T_p(X) \leq H$ for $p \in P$. This proves (b).
 
As $p_0^{k}K_{p_0}$ is finite and non-trivial, we deduce that $k< k_{p_0}$. Therefore, $p_0^{k_{p_0}-1}K_{p_0}$ is still finite and non-trivial. Hence, $p_0^{k_{p_0}-1}X'_{p_0} \not \leq H_{p_0}'$, and so $$p_0^{k_{p_0}-1}T_{p_0}(X) \not \leq H.$$
To prove the second assertion in  (c) note that the finiteness of $p_0^{k}K_{p_0}$ yields that 
$$p_i^{k_{p_0}-1}(X_{p_0}'/H_{p_0}')= (p_0^{k_{p_0}-1}X_{p_0}' + H_{p_0}')/H_{p_0}' \cong p_0^{k_{p_0}-1}X_{p_0}'/ (H_{p_0}' \cap p_0^{k_{p_0}-1}X_{p_0}' )$$
is finite. 
Hence, from \eqref{Last} we deduce that $T_{p_0}(H)\cap p_0^{k_{p_0}-1}T_{p_0}(X)$ has finite index in $p_0^{k_{p_0}-1}T_{p_0}(X)$.  Therefore, 
$$ H\cap p_0^{k_{p_0}-1}T_{p_0}(X) =\cap T_{p_0}(H)\cap p_0^{k_{p_0}-1}T_{p_0}(X)$$ has finite index in $p_0^{k_{p_0}-1}T_{p_0}(X)$.  

(v)$\Rightarrow$(iii) Let $m'$ be the product of all $p^{k_p}$ when $p$ runs over $P$ and let $m= m'/p$. Then an argument similar to the above argument shows that $m(X/H) \ne \{0\}$ is finite. 
\end{proof}

\begin{cor}\label{AmmazzaGabr}
Let $X$ be a compact abelian group and $H$ a closed subgroup of $X$ that does not contain the connected component $c(X)$ of $X$. Then $H\in\CH_T(X)$ if and only if $H\in\CH(X)$. 
\end{cor}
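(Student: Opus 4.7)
The plan is to derive the non-trivial implication from Theorem \ref{MainThChapter7} by exploiting the containment $c(X) \leq T_q(X)$ for every prime $q$, which is noted in the text just before equation \eqref{LAAAAst}. The reverse implication (``$T$-characterized implies characterized'') is immediate from the definitions, so only the forward implication needs work.

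For the forward direction, suppose $H \in \CH(X)$. Since $H$ is closed in the compact abelian group $X$, Corollary \ref{TGchar} tells us that $H$ is a $\G$-subgroup of $X$, so Theorem \ref{MainThChapter7} applies. I would now argue by contradiction: assume $H \notin \CH_T(X)$. Then by the equivalence (i)$\Leftrightarrow$(v) of Theorem \ref{MainThChapter7}, there exists a \emph{finite} set $P$ of primes such that, in particular, condition (a) holds: $T_q(X) \leq H$ for every prime $q \in \Prm \setminus P$.

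Since $P$ is finite, pick any prime $q \in \Prm \setminus P$. Using the fact recalled in the text that $c(X) \leq T_q(X)$ for every prime $q$, we obtain
\[
c(X) \leq T_q(X) \leq H,
\]
which contradicts the standing hypothesis that $H$ does not contain $c(X)$. Hence $H \in \CH_T(X)$, completing the proof. There is no substantial obstacle here: the whole argument reduces to invoking the equivalence (i)$\Leftrightarrow$(v)(a) of Theorem \ref{MainThChapter7} together with the elementary inclusion $c(X)\leq T_q(X)$; all the heavy lifting has already been done in proving that theorem.
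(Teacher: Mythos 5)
Your proposal is correct, but it takes a different path through Theorem \ref{MainThChapter7} than the paper does. You invoke the equivalence (i)$\Leftrightarrow$(v) and use only clause (a) of (v): since $P$ is finite you can pick a prime $q\notin P$, and then $c(X)\leq T_q(X)\leq H$ contradicts the hypothesis. The paper instead uses the equivalence (i)$\Leftrightarrow$(iii): since $c(X)\not\leq H$, the image $\pi(c(X))$ is a non-trivial connected, hence non-trivial divisible, hence infinite subgroup of $X/H$, so $m(X/H)$ is infinite for every $m\in\N_+$ and condition (iii) fails. Both arguments ultimately rest on the divisibility of the connected component, yours via the inclusion $c(X)\leq T_q(X)$ (which itself comes from $c(X)=mc(X)\leq mX$ and \eqref{LAAAAst}), the paper's via divisibility of $c(X/H)$. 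The paper's route is slightly more economical in what it demands of Theorem \ref{MainThChapter7}: it only needs the implication (i)$\Rightarrow$(iii), which is established there directly through the chain (i)$\Leftrightarrow$(ii)$\Leftrightarrow$(iii), whereas your route needs (i)$\Rightarrow$(v), whose justification in the written proof of the theorem passes through the less explicitly closed part of the cycle involving (v) and (vi). If you take the theorem's statement at face value this is immaterial, but if you want the corollary to depend only on the portions of Theorem \ref{MainThChapter7} that are proved in full detail, the route through (iii) is the safer one. Everything else in your argument (the appeal to Corollary \ref{TGchar} to get the $\G$ hypothesis, and the triviality of the reverse implication) matches the paper.
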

\begin{proof} 
Clearly, $H$ $T$-characterized implies $H$ characterized. So, assume that $H$ is characterized. Then $H$ is $G_\delta$ in $X$ by Corollary \ref{TGchar}  Let $\pi: X \to X/H$ be the canonical projection. Then $\pi(c(G))$ is a non-trivial connected subgroup of $X/H$, hence $X/H$ is unbounded as its connected component $c(X/H)$ is a non-trivial divisible subgroup.
According to Theorem \ref{MainThChapter7}, $H$ is $T$-characterized. 
\end{proof}

By Corollary \ref{AmmazzaGabr}, for a connected compact abelian group $X$ and $H$ a closed subgroup of $X$,
\begin{equation}\label{eqclosed}
H\in \CH_T(X)\ \Leftrightarrow\ H\in\CH(X)\setminus\{X\}.
\end{equation} 
This result was obtained in \cite{Gab14}; actually, the following more precise form holds (the equivalence (i)$\Leftrightarrow$(ii) is proved in \cite[Theorem 1.14]{Gab14}): 
%

\begin{cor}\label{JapJap0}
For a compact abelian group $X$, the following conditions are equivalent:
\begin{itemize}
\item[(i)] $X$ is connected; 
\item[(ii)] $H\in\CH_T(X)\ \Leftrightarrow\ H\in\CH(X)\setminus\{X\}$ for every closed subgroup $H$ of $G$; 
\item[(iii)] $H\in\CH_K(X)\ \Leftrightarrow\ H\in\CH(X)\setminus\{X\}$ for every closed subgroup $H$ of $G$.
\end{itemize}
\end{cor}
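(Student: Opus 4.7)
The plan is to prove the cyclic chain (i)$\Rightarrow$(ii)$\Rightarrow$(iii)$\Rightarrow$(i). The forward implication (i)$\Rightarrow$(ii) is essentially the content of \eqref{eqclosed}: if $X$ is connected then every proper closed subgroup $H$ automatically fails to contain $c(X)=X$, so Corollary \ref{AmmazzaGabr} yields $H\in\CH_T(X)\Leftrightarrow H\in\CH(X)$; moreover $X\notin\CH_T(X)$ because by Proposition \ref{CCTWiii} the equality $X=s_\vs(X)$ forces $\vs$ to be eventually null, contradicting the non-triviality required of a $T$-sequence in Definition \ref{T-N-K}(i). Combining these two facts gives (ii).

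For (ii)$\Rightarrow$(iii), I would use that $T$-characterized implies $K$-characterized (immediate from the definitions, since a $T$-sequence is non-trivial and hence, once we discard repeated characters as in Remark \ref{wlog11}, we may take it one-to-one — more transparently, a $T$-sequence contains no constant subsequences). Thus (ii) gives $\CH(X)\setminus\{X\}\subseteq\CH_T(X)\subseteq\CH_K(X)$. The reverse inclusion $\CH_K(X)\subseteq\CH(X)\setminus\{X\}$ holds because $\CH_K(X)\subseteq\CH(X)$ trivially and $X\notin\CH_K(X)$ — indeed, $X$ is open in itself, and Lemma \ref{CKnotOpen} says no open subgroup of a compact abelian group is $K$-characterized (alternatively, a finitely many-to-one sequence cannot be eventually null).

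The main obstacle is (iii)$\Rightarrow$(i), where I would argue contrapositively: assume $X$ is not connected and produce $H\in\CH(X)\setminus\{X\}$ with $H\notin\CH_K(X)$, violating (iii). Since $X/c(X)$ is a non-trivial compact totally disconnected abelian group, it is profinite and therefore has a proper open subgroup $K$; letting $\pi\colon X\to X/c(X)$ be the quotient map, $H:=\pi^{-1}(K)$ is a proper open subgroup of $X$. Being open in the compact group $X$, $H$ has finite index, so $X/H$ is finite, hence $H$ is $\G$ and therefore characterized by Corollary \ref{TGchar}. However, $H$ is open, so Lemma \ref{CKnotOpen} forbids $H\in\CH_K(X)$. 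This $H$ witnesses the failure of (iii), completing the proof. The key insight making the argument work is precisely Lemma \ref{CKnotOpen}, which allows the $K$-characterized version of \eqref{eqclosed} to detect disconnectedness (something the $T$-characterized version cannot do on its own without invoking the full strength of Theorem \ref{MainThChapter7}).
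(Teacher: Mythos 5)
Your proof is correct and follows essentially the same route as the paper: (i)$\Rightarrow$(ii) via \eqref{eqclosed} (i.e., Corollary \ref{AmmazzaGabr} plus Proposition \ref{CCTWiii} for $H=X$), (ii)$\Rightarrow$(iii) from the inclusion $\CH_T(X)\subseteq\CH_K(X)$ together with Lemma \ref{CKnotOpen}, and (iii)$\Rightarrow$(i) by exhibiting a proper open subgroup of a disconnected $X$, which is characterized but not $K$-characterized by Lemma \ref{CKnotOpen}. You merely spell out the middle implication, which the paper dismisses as obvious.
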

\begin{proof}
(i)$\Rightarrow$(ii) by \eqref{eqclosed} 
and 
(ii)$\Rightarrow$(iii) is obvious.

(iii)$\Rightarrow$(i) Assume that $X$ is not connected. Then $X$ has a proper open subgroup $H$, as the connected component of $X$ is an intersection of clopen subgroups (see \cite{HR79}). Then $H\in\CH(X)\setminus\{X\}$, but $H\not \in \CH_K(X)$ by Lemma \ref{CKnotOpen}.  
\end{proof}

Obviously, each one of the above equivalent conditions implies that $H\in\CH_T(X)\ \Leftrightarrow\ H\in\CH_K(X)$ for every closed
 subgroup $H$ of a compact abelian group $X$. 
To see that in general the latter property is strictly weaker than $H\in\CH_T(X)$, consider the group $X = \Z(3)\times \Z(2)^\N$. Then for the closed subgroup $H = \{0\}$ of $X$ one has $H\in \CH_K(X)$ by Theorem \ref{CKNG}, while 
$H\not\in \CH_T(X)$ by Theorem \ref{MainThChapter7}, as $3X$ is finite and non-trivial (moreover, $\wh X$ does not admit any MinAP group topology, as noticed by D. Remus, see \cite{Com}).

\section{Final comments and open questions}\label{finalsec}

 In this section we collect various open questions arising throughout the paper.

\medskip
For a topological abelian group $X$ and $\vs\in\wh X^\N$, we defined for each $m\in\N$ the closed subgroup $\n_{\vs_{(m)}}(X)$ in \eqref{vmeq}. Since these subgroups are contained one in each other, the increasing union $\mathbf F_\vs(X):=\bigcup_{m\in\N}\n_{\vs_{(m)}}(X)$ is an $F_\sigma$-subgroup of $X$ and it is contained in $s_\vs(X)$. We do not know whether $\mathbf F_\vs(X)$ is characterized or not:

\begin{que} 
For a topological abelian group $X$ and $\vs\in\wh X^\N$, is $\mathbf F_\vs(X)$ a characterized subgroup of $X$?
\end{que}

This question is motivated by \cite[Theorem 1.11]{DG13}, where it is proved that under some additional restraint, the union of a countably infinite increasing chain of closed characterized subgroups of a metrizable compact abelian group is still characterized. 
On the other hand, it is known that every characterized subgroup is $F_{\sigma\delta}$ (see Lemma \ref{Lbasic}(iv)) and that characterized subgroup need not be $F_\sigma$.

\medskip
In analogy to $T$-characterized subgroups, we have introduced here the notion of $TB$-characterized subgroup (see Definition \ref{TB}). In analogy to what is already known for $T$-characterized subgroups, one could consider the following general problem.

\begin{pro}\label{TBPB}
Study the $TB$-characterized subgroups of topological abelian groups.
\end{pro}

Next comes a more precise question on the properties of $T$- and $TB$-characterized subgroups. In fact, we do not know whether the counterpart of Corollary \ref{NEW:lemmaK} is true for $T$- and $TB$-characterized subgroups:

\begin{que}
Let $X$ be a topological abelian group and $X_0$, $X_1$, $X_2$ subgroups of $X$ with $X_0\leq X_1\leq X_2$ such that $X_1$ is dually embedded in $X_2$. 
\begin{itemize}
\item[(i)] If $X_0\in \CH_T(X_1)$ and $X_1\in \CH_T(X_2)$, is then $X_0\in \CH_T(X_2)$?
\item[(ii)] If $X_0\in \CH_{TB}(X_1)$ and $X_1\in \CH_{TB}(X_2)$, is then $X_0\in \CH_{TB}(X_2)$?
\end{itemize}
\end{que}

A full description of open $K$-characterized subgroups is given in Theorems \ref{Kcar=car}  and \ref{PtimesKchar:new}, while Theorem \ref{T1122} describes the closed $K$-characterized subgroups of infinite index that are also $N$-characterized. Moreover, $N$-characterized closed subgroups of infinite index are $K$-characterized. This leaves open the following general problem and question.


\begin{pro}
For a topological abelian group $X$, describe $\CH_K(X)$.
\end{pro}

\begin{que}
Let $X$ be a topological abelian group.
Can one add the ``$H\in\CH_K(X)$" as an equivalent condition in Theorem \ref{T1122}?
Equivalently, does there exist a closed subgroup $H$ of $X$ such that $H\in\CH_K(X)\setminus \CH_N(X)$?
\end{que}

In Theorem \ref{PtimesKchar:new} we have seen in particular that a proper open finite-index subgroup $H$ of a topological abelian group $X$ is autocharacterized precisely when $H\in \CH_K(X)$. We do not know whether also the stronger condition $H\in\CH_T(X)$ is equivalent:

\begin{que}
Let $H$ be a topological abelian group and $H$ an open subgroup of $X$ of finite index. Does $H\in\CH_T(X)$ whenever $H$ is autocharacterized? What about the case when $H$ is a topological direct summand of $X$? 
\end{que}

By looking at Theorem \ref{TsupeGeneral} and Corollary \ref{TitemiPre}, the following natural question arises:

\begin{que}\label{Wis}
Are closed $\G$-subgroups of a precompact abelian groups always $N$-characterized? 
\end{que}

This amounts to ask whether there exists a continuous injection from $X/F$ into $\T^\N$ for every closed $\G$ subgroup $F$ of a precompact abelian group $X$, in other words we are asking for a generalization of Lemma \ref{LinjPre}.

\end{document}